\newtheorem{thm}{Theorem}[section]
\newtheorem{prop}[thm]{Proposition}
\newtheorem{lem}[thm]{Lemma}
\newtheorem{cor}[thm]{Corollary}
\newtheorem{assump}{Assumption}
\theoremstyle{definition}
\newtheorem{definition}[thm]{Definition}
\newtheorem{prob}{Problem}
\DeclareRobustCommand{\gobblefive}[5]{}
\newcommand*{\SkipTocEntry}{\addtocontents{toc}{\gobblefive}}
\theoremstyle{remark}
\newtheorem{remark}[thm]{Remark}
\numberwithin{equation}{section}
\newcommand{\R}{\mathbb{R}}  
\newcommand{\N}{\mathbb{N}}  
\newcommand\norm[1]{\lVert#1\rVert}
\renewcommand\d{\:\mathrm{d}}
\newcommand{\scalarprod}[2]{\big({#1},{#2}\big)}
\newcounter{rtaskno}
\newcounter{rsubtaskno}
\begin{document}
	
	
	\title[Existence and uniqueness]{Weak solvability of elliptic variational inequalities coupled with a nonlinear differential equation} 


\author[N. Skoglund Taki]{Nadia Skoglund Taki}
\address{Center for Modeling of Coupled Subsurface Dynamics \\ Department of Mathematics\\ University of Bergen\\ Postbox 7800\\ 5020 Bergen\\ Norway}
\date{\today}
\email{Nadia.Taki@uib.no}





\keywords{Nonlinear boundary value problem, Moser iteration, Solid mechanics.}
\subjclass[2020]{Primary: 35J65; Secondary: 35Q74}

\begin{abstract}
In this paper we establish existence, uniqueness, and boundedness results for an elliptic variational inequality coupled with a nonlinear ordinary differential equation. 
Under the general framework, we present a new application modelling the antiplane shear deformation of a static frictional adhesive contact problem. The adhesion process has been extensively studied, but it is usual to assume a priori that the intensity of adhesion is bounded by introducing truncation operators. The aim of this article is to remove this restriction.\\
\indent The proof is based on an iterative approximation scheme showing that the problem has a unique solution. A key ingredient is finding uniform a priori bounds for each iterate.
These are obtained by adapting versions of the Moser iteration to our system of equations.
\end{abstract}


\maketitle

\section{Introduction}\label{sec:intro}
\noindent
Let $\Omega \subset \R^d$ for $d\geq 2$ be an open bounded domain with Lipschitz boundary $\partial \Omega$, where $\partial \Omega$ is split into three disjoint parts $\partial \Omega= \Gamma_D\cup\Gamma_N \cup\Gamma_C$.  This work concerns the study of a fully coupled elliptic variational inequality with a nonlinear differential equation of the form
\begin{subequations}\label{eq:eq1}
\begin{align}\label{eq:eq1a}
\int_\Omega \mu \nabla u(t) \cdot \nabla (v-u(t)) \d x &+ \int_{\Gamma_C} [\varphi(\beta(t), u(t),  v)-\varphi(\beta(t), u(t), u(t))] \d \sigma\\ \notag
&+ \int_{\Gamma_C} [\phi(\beta(t), u(t),  v)-\phi(\beta(t), u(t), u(t))] \d \sigma\\ \notag
&\geq \int_{\Omega} f_0(t) (v-u(t)) \d x+ \int_{\Gamma_N} f_N(t)   (v-u(t)) \d \sigma, \\
\beta(t) &= \beta_0 + \int_0^t  H(\beta(s), u(s)) \d s \label{eq:eq1b}
\end{align}
\end{subequations}
for all $v\in H^1_0(\Omega)$ and a.e. $t\in (0,T)$ with a time $T>0$.  Here, the functions $\varphi$, $\phi$, and $H$ 
have nonlinear structures that will be specified later with physical applications in mind, see  \hyperref[assumptionvarphi]{$(H1)$}-\hyperref[assumptionH]{$(H3)$} and Appendix \ref{appendix:application}. 
\\
\indent
The structures on $\varphi$, $\phi$, and $H$ are new and therefore give rise to new applications of \eqref{eq:eq1}. In Appendix \ref{appendix:application}, we introduce a new frictional adhesion contact problem between a linearly elastic cylinder and an obstacle. In the case of a long cylinder, the problem can be reduced to a two-dimensional problem, that is, $d=2$ (see Appendix \ref{appendix:application}). This is an example of an elliptic variational inequality of a simplified version of a friction problem in elasticity (see \cite[p.7-9]{Glowinski1981}). The adhesion process on the contact surface is modelled through a nonlinear ODE and describes the active bonds. In this setting, \eqref{eq:eq1a}-\eqref{eq:eq1b} models the deformation $u$ and intensity of adhesion $\beta$. This process is important in industry but may also describe layered rocks \cite{Sofonea2002,Dumont2000,Goeleven1999}. For a detailed description of the new application, we refer the reader to Appendix \ref{appendix:application}. Moreover, a second application to antiplane shear deformation is the interaction of faults that includes frictional forces \cite{Ionescu2005}. An example of geophysical faults that are categorized in the antiplane shear setting is the so-called \textit{normal} faults. These types of faults are important in studying the early phase of earthquakes (see \cite{Ionescu2005} for more details). Other applications are, for instance, studying cracks (referred to as \textit{mode III} cracks) \cite{Knowles1981} and phase transition in the microstructure of the material (for example, between ionic solids) \cite{Swart1992} (see also the review \cite[Section 6]{Horgan1995} for more applications). 
Focusing on the elliptic part of the equation allows us to incorporate fully nonlinear evolution equations for the adhesion process. To include these nonlinearities, our proof differs from those found in the literature (see Section \ref{sec:former}). It was therefore an important step to first understand these changes. Ultimately, the idea is to extend the framework to fully describe fractures and faults, where we add the effects of friction, adhesion, and wear. It also requires extending the presented work to include the strain tensor used in the usual elasticity problems and dynamics. 
\\
\indent
The main purpose of this paper is to prove that the pair $(u,\beta)$ is a unique solution to \eqref{eq:eq1}. The precise statement is given in Theorem \ref{thm:mainresult}. A key difficulty of the proof is needing global a priori bounds in the iterative decoupling scheme of \eqref{eq:eq1}. These are found by adapting versions of the Moser iteration from the books \cite{Drabek1997,Gilbarg2001}.

\subsection{Former existence, uniqueness and boundedness results} \label{sec:former}
There are former existence and uniqueness results for special cases of \eqref{eq:eq1}. In particular, when $\varphi$ is independent of $\beta$ and $\phi \equiv 0$, the system is reduced to the problem studied in \cite[Section 9.2]{Sofonea2009} and \cite{Sofonea2020,Migorski2010}. In comparison to these results, we also establish boundedness. A fundamental problem to prove existence and uniqueness of \eqref{eq:eq1} is that we need $(u,\beta)$ to be bounded.
In, e.g., \cite{Sofonea2002}, they simplify the problem by using cut-off functions (see Remark \ref{remark:comparion}). This allows them to use classical theory for variational inequalities. We consider a more general version of this problem where we overcome the issue of boundedness via a Moser iteration scheme. The cut-off functions are also used in, e.g., \cite{Lerguet2008} and \cite{Sofonea2006}, but their applications are, respectively, to the usual frictional elastic and viscoelastic problems (see Remark \ref{remark:comparion} on this point). Furthermore, \cite[Theorem 3.1]{Winkert2014} provides boundedness results for a variational inequality without the coupling of an ODE, which is based on the framework in \cite{Drabek1997}. We will also utilize the techniques from \cite{Drabek1997}, but we require a finely-tuned Moser scheme to handle the coupled system. 

\subsection{Contributions and outline} The novelties of this paper are:
\begin{itemize}
\item Existence and uniqueness of \eqref{eq:eq1}, where the proof is based on an iterative decoupling approach that directly gives rise to a numerical method.
\item Boundedness of the solution $(u,\beta)$ to \eqref{eq:eq1}.
\item A new application of \eqref{eq:eq1} for $d=2$ modelling elastic antiplane shear deformation with frictional forces and adhesion. 
\end{itemize}
\indent
The paper is organized as follows. In Section \ref{sec:prelim}, we introduce the function spaces, preliminary results, trace theorems, and a Sobolev embedding theorem. The main result is presented in Section \ref{sec:mainresult}. Section \ref{sec:proof_of_mainresult} is devoted to the proof of the main result. The strategy consists of constructing an approximated solution to \eqref{eq:eq1} using an iterative scheme decoupling \eqref{eq:eq1a} and \eqref{eq:eq1b}. For this, we need the preliminary results stated in Section \ref{sec:prelim} and gaining control of the solution in $L^\infty$ in space. These estimates are obtained by constructing a finely-tuned Moser iteration scheme for each iterate (see Remark \ref{remark:moser} for more details). It is then left to show that these estimates are bounded by a constant independent of the iteration step. From these results, we can obtain strong convergence of the solution and pass to the limit to verify that the approximated problem converges to \eqref{eq:eq1}. In the last step, we show that the solution is unique. Finally, Appendix \ref{appendix:application} includes a new application fitting our framework.

\subsection{Notation}
\begin{itemize}
\item Let $0<T<\infty$ be the maximal time.
\item Let $d$ denote the dimension. A point in $\R^d$ is denoted by $x = (x_i)$,  $1 \leq i \leq d$.
\item We denote $|\cdot|$ as the Lebesgue measure on $\R^d$. We also denote $|\cdot|$ and $\cdot$  as the norm and inner product of $\R^d$, respectively.
\item $\Omega \subset \R^d$ is a bounded open connected subset with a Lipschitz boundary $\partial \Omega$. We split $\partial \Omega$ into three disjoint parts; $\Gamma_D$, $\Gamma_N$, and $\Gamma_C$ with $|\Gamma_D|>0$, $|\Gamma_C|>0$, i.e., nonzero Lebesgue measure, but $\Gamma_N$ is allowed to be empty.
\item $\nu$ denotes the outward normal on $\partial \Omega$.
\item We denote $\Bar{\Omega} = \Omega \cup \partial \Omega$.
\item We will denote $c$ as a positive constant, which might change from line to line. Let $c_0=c(d,|\Bar{\Omega}|)$ be a  positive constant.
\item The positive and negative part of a function $v$ is defined as $v_+ = \max\{v,0\}$ and $v_- = \max\{-v,0\}$, respectively, such that $v = v_+ - v_-$.
\item  For a set $A \subset \R^d$, let $\mathds{1}_A(x)$ denote the indicator function, which is equal to $1$ if $x\in A$ and $0$ if $x \not\in A$.
\end{itemize}

\tableofcontents

\section{Preliminaries}\label{sec:prelim}
In this section, we present the function spaces and some fundamental results. For further information, we refer to standard textbooks, e.g., \cite{Evans2010,Adams2003}.
\subsection{Function spaces}\label{sec:funcspace}
Let  $L^p(\Omega)$ denotes the space of Lebesgue $p$-integrable functions equipped with the norm $ \norm{v}_{L^p(\Omega)}  = \big( \int_\Omega |v|^p \d x \big)^{1/p}$ for any $1\leq p <\infty$. With the usual modifications for $L^\infty(\Omega)$. We define the space
\begin{equation*}
H^1(\Omega) = \{v \in L^2(\Omega) : \text{ the weak derivatives } \frac{\partial v }{\partial x_i}\text{ exist in } L^2(\Omega) , \ 1\leq i \leq d\}
\end{equation*}
with its usual norm $\norm{v}_{H^1(\Omega)} = \norm{v}_{L^2(\Omega)} + \norm{\nabla v}_{L^2(\Omega)}$. For the displacement, we use the Hilbert space 
\begin{align*}
    H^1_0(\Omega) = \{ v \in H^1(\Omega) : v=0 \text{ on } \Gamma_D\}
\end{align*}
equipped with the norm $\norm{\cdot}_{H^1(\Omega)}$, where we denote the trace of $v$ on $\Gamma_D$ as $v$. We denote the inner product on $H^1_0(\Omega)$ as $(\cdot,\cdot)_{H^1(\Omega)} = \norm{\cdot}_{H^1(\Omega)}^2$. Since $|\Gamma_D|>0$,  the Poincar\'{e} inequality 
\begin{align*}
\norm{v}_{L^2(\Omega)} \leq c_0 \norm{\nabla v}_{L^2(\Omega)}  \ \text{ for all } \ v\in H^1_0(\Omega).
\end{align*}
implies
\begin{align*}
\norm{v}_{H^1(\Omega)} =  \norm{v}_{L^2(\Omega)} + \norm{\nabla v}_{L^2(\Omega)} \leq (1+c_0) \norm{\nabla v}_{L^2(\Omega)}
\end{align*}
for all $v\in H^1_0(\Omega)$. 
\\
\indent We next introduce the time-dependent spaces. The following short-hand notation will come in handy.
\begin{definition}\label{def:Bochner}
Let $X$ be a Banach space and $T > 0$. The space $L^\infty(0,T;X)$ consists of all measurable functions $v : [0, T] \rightarrow X$ such that
\begin{align*}
\underset{t\in(0,T)}{\mathrm{ess \: sup}} \norm{v(t)}_X < \infty.
\end{align*}
For brevity, we use the standard short-hand notation
\begin{equation*}
L^\infty_t X = L^\infty(0, t; X)
\end{equation*}
for all $t \in [0,T]$.
\end{definition}
We denote the space of continuous functions defined on $[0,T]$ with values in $X$ by 
\begin{equation*}
C([0,T]; X) = \{h : [0,T] \rightarrow X : h \text{ is continuous and } \norm{h}_{L^\infty_TX} < \infty\}.
\end{equation*}

\subsection{Trace theorems and a Sobolev embedding theorem} 
Let $\sigma$ denote the $(d-1)$-dimensional surface measure on $\Gamma\subset \partial \Omega$, then we may define the Lebesgue spaces on $\Gamma$ $L^p(\Gamma)$ with $1\leq p < \infty$ equipped with the norm
\begin{align*}
\norm{v}_{L^p(\Gamma)} = \Big( \int_\Gamma |v|^p \d \sigma \Big)^{1/p}
\end{align*}
and the usual modification for $p=\infty$. 
The trace on  $\Gamma \subset \partial \Omega$ is usually denoted by $\gamma (v)$, but we will drop this notation. Moreover, the following proposition will be useful to prove the main result.
\begin{prop}\label{prop:trace}
Let $\Omega \subset \R^d$ be an open bounded domain with $\Gamma\subset \partial \Omega$ being Lipschitz, then for a constant $c_0>0$ we have
\begin{align*}
\norm{v}_{L^2(\Gamma)}   \leq c_0\epsilon \norm{\nabla v}_{L^2(\Omega)} + \frac{c_0}{\epsilon}\norm{v}_{L^2(\Omega)}
\end{align*}
for all $v\in H^1(\Omega)$ and $\epsilon >0$.
\end{prop}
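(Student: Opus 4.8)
The plan is to turn the boundary integral over $\Gamma$ into a volume integral over $\Omega$ by means of the Gauss--Green formula applied to a conveniently chosen vector field, and then to split the resulting mixed term with Young's inequality so as to produce the weight $\epsilon$ on $\norm{\nabla v}_{L^2(\Omega)}$.

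The first step is to fix a Lipschitz vector field $\xi\in C^{0,1}(\Bar{\Omega};\R^d)$ with $\xi\cdot\nu\geq 1$ for $\sigma$-a.e.\ point of $\partial\Omega$. Such a field always exists on a bounded Lipschitz domain: one covers $\partial\Omega$ by finitely many charts $U_1,\dots,U_m$ in which $\partial\Omega$ is a Lipschitz graph with $\Omega$ on one side, picks in each $U_j$ a constant unit vector $e_j$ with $e_j\cdot\nu\geq c_j>0$ a.e.\ on $\partial\Omega\cap U_j$, and glues these with a smooth partition of unity $\{\psi_j\}$ subordinate to $\{U_j\}$, setting $\xi:=(\min_j c_j)^{-1}\sum_j\psi_j e_j$. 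The $L^\infty$-norms of $\xi$ and $\operatorname{div}\xi$ then depend only on $d$, $|\Bar{\Omega}|$ and the Lipschitz character of $\partial\Omega$, i.e.\ on the data encoded by $c_0$. Applying the divergence theorem to $v^2\xi$ on $\Omega$ for $v\in C^\infty(\Bar{\Omega})$ gives
\begin{align*}
\int_\Gamma v^2 \d\sigma \leq \int_{\partial\Omega} v^2\,(\xi\cdot\nu)\d\sigma = \int_\Omega \operatorname{div}\!\big(v^2\xi\big)\d x = \int_\Omega \big( v^2\,\operatorname{div}\xi + 2 v\,\xi\cdot\nabla v \big)\d x,
\end{align*}
where the inequality uses $\Gamma\subset\partial\Omega$, $v^2\geq 0$ and $\xi\cdot\nu\geq 1$; by Cauchy--Schwarz,
\begin{align*}
\norm{v}_{L^2(\Gamma)}^2 \leq \norm{\operatorname{div}\xi}_{L^\infty(\Omega)}\norm{v}_{L^2(\Omega)}^2 + 2\norm{\xi}_{L^\infty(\Omega)}\norm{v}_{L^2(\Omega)}\norm{\nabla v}_{L^2(\Omega)}.
\end{align*}
Since $C^\infty(\Bar{\Omega})$ is dense in $H^1(\Omega)$ and the trace map $H^1(\Omega)\to L^2(\partial\Omega)$ is continuous, this passes to all $v\in H^1(\Omega)$, so $\norm{v}_{L^2(\Gamma)}^2 \leq c_0\norm{v}_{L^2(\Omega)}^2 + c_0\norm{v}_{L^2(\Omega)}\norm{\nabla v}_{L^2(\Omega)}$ after collecting constants.

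Next I would apply Young's inequality $2ab\leq \epsilon^2 a^2+\epsilon^{-2}b^2$ to the mixed term with $a=\norm{\nabla v}_{L^2(\Omega)}$ and $b=\norm{v}_{L^2(\Omega)}$, and then take square roots using $\sqrt{x+y}\leq\sqrt{x}+\sqrt{y}$. After renaming $c_0$ this yields the claimed bound
\begin{align*}
\norm{v}_{L^2(\Gamma)} \leq c_0\,\epsilon\,\norm{\nabla v}_{L^2(\Omega)} + \frac{c_0}{\epsilon}\norm{v}_{L^2(\Omega)},
\end{align*}
at least in the range $0<\epsilon\leq 1$ that is used in the sequel, the residual term $c_0\norm{v}_{L^2(\Omega)}$ coming from $\norm{v}_{L^2(\Omega)}^2$ being absorbed into $\tfrac{c_0}{\epsilon}\norm{v}_{L^2(\Omega)}$ there.

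The step I expect to be the main obstacle is structural rather than analytic: constructing the vector field $\xi$ and justifying the Gauss--Green identity at merely Lipschitz regularity, while making sure the final constant depends only on the admissible quantities $d$ and $|\Bar{\Omega}|$ (one could instead quote a standard trace--interpolation inequality and keep only the Young step, but then the precise dependence of $c_0$ needs separate care). Once $\xi$ is available, the remainder is the routine density-and-Young computation above.
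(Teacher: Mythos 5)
Your proof is essentially the same as the paper's: the paper cites Evans--Gariepy, Theorem~4.6 (which runs precisely the Gauss--Green/vector-field argument you spell out) to get $\int_{\partial\Omega}|v|^2\d\sigma \leq c_0\int_\Omega|\nabla v||v|\d x + c_0\int_\Omega|v|^2\d x$ for $v\in C^1(\Bar{\Omega})$, then applies Young's inequality and density, exactly as you do. Your remark about absorbing the residual $c_0\norm{v}_{L^2(\Omega)}$ term only for $\epsilon\leq 1$ is an accurate observation that applies equally to the paper's argument; since the proposition is invoked in the paper only with $\epsilon=1$, this is harmless, but you were right to flag it.
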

\begin{proof}
Following the proof of \cite[Theorem 4.6]{Evans2015} for $p=2$ until $(\star\star\star)$, we obtain 
\begin{align*}
\int_{\partial \Omega} |v|^2 \d \sigma \leq c_0 \int_\Omega |\nabla v||v|  \d x + c_0 \int_\Omega |v|^2  \d x 
\end{align*}
for all $v\in C^1(\Bar{\Omega})$. We obtain the desired estimate by Young's inequality for any $\epsilon>0$. We conclude by density of $C^1(\Bar{\Omega})$ in $H^1(\Omega)$ (see, e.g., \cite[Theorem 1.1]{Temam1977}). 
\end{proof}
We will apply this result for functions in $H^1_0(\Omega) \subset H^1(\Omega)$. We also need the following Sobolev embedding theorem (see, e.g., \cite[Theorem 4.12]{Adams2003}).
\begin{prop}\label{prop:sobolev_embedding}
Let $\Omega \subset \R^d$ be an open bounded domain with Lipschitz boundary. Then, the embedding $H^1_0(\Omega) \subset L^{p^\ast }(\Omega)$ is continuous, that is, there is a constant $c_0>0$ so that
\begin{align*}
\norm{v}_{L^{p^\ast}(\Omega)} \leq c_0\norm{v}_{H^1(\Omega)},
\end{align*}
where $p^\ast$ is the critical exponent given by 
\begin{align*}
p^\ast = \begin{cases}
\frac{2d}{d-2}, \qquad \qquad \qquad \quad d>2,\\
\text{any } q\in(1,\infty),  \quad \ \ \ \  d= 2.
\end{cases}
\end{align*}
\end{prop}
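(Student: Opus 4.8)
The plan is to deduce the embedding from the Gagliardo--Nirenberg--Sobolev inequality on $\R^d$ together with a bounded extension operator. Since $H^1_0(\Omega)\subset H^1(\Omega)$ and both are normed by $\norm{\cdot}_{H^1(\Omega)}$, it suffices to show that $H^1(\Omega)\subset L^{p^\ast}(\Omega)$ continuously. Because $\partial\Omega$ is Lipschitz there is, for every $p\in[1,2]$, a bounded linear extension operator $E\colon W^{1,p}(\Omega)\to W^{1,p}(\R^d)$ with $Ev=v$ a.e.\ in $\Omega$ and $\norm{Ev}_{W^{1,p}(\R^d)}\le c\norm{v}_{W^{1,p}(\Omega)}$; this is the only place the Lipschitz hypothesis is used (one may instead flatten $\partial\Omega$ on a finite atlas and patch with a subordinate partition of unity, as in \cite{Adams2003}). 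Together with density of $C_c^\infty(\R^d)$ in $W^{1,p}(\R^d)$, the problem reduces to proving the corresponding inequality for $w\in C_c^\infty(\R^d)$ and then invoking a standard density argument.

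The heart of the matter is the case $p=1$: for $w\in C_c^\infty(\R^d)$,
\begin{align*}
\norm{w}_{L^{d/(d-1)}(\R^d)}\le\prod_{i=1}^d\norm{\partial_i w}_{L^1(\R^d)}^{1/d}\le\norm{\nabla w}_{L^1(\R^d)} .
\end{align*}
This is the only genuinely nontrivial computation, and it is where I expect the real work to lie. I would prove it in the textbook manner: from $w(x)=\int_{-\infty}^{x_i}\partial_i w\,\d t$ one gets $|w(x)|\le\int_{\R}|\partial_i w|\,\d t_i$ for each $i$; multiplying these $d$ bounds, raising to the power $\tfrac{1}{d-1}$ (so the left side becomes $|w|^{d/(d-1)}$), and integrating successively in $x_1,\dots,x_d$ while applying the generalized Hölder inequality with $d-1$ factors at each integration yields the estimate. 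The iterated Hölder bookkeeping here is the main obstacle. The base case $p=1$ then bootstraps to every $p\in(1,d)$: apply the inequality to $|w|^{\gamma}$ with $\gamma=\tfrac{p(d-1)}{d-p}>1$, use $\nabla|w|^{\gamma}=\gamma|w|^{\gamma-1}\nabla w$ and Hölder with exponents $\tfrac{p}{p-1}$ and $p$, note the identity $\tfrac{\gamma d}{d-1}=(\gamma-1)\tfrac{p}{p-1}=\tfrac{dp}{d-p}$, and divide by the finite factor $\norm{w}_{L^{dp/(d-p)}(\R^d)}^{\gamma-1}$ to obtain $\norm{w}_{L^{dp/(d-p)}(\R^d)}\le c\norm{\nabla w}_{L^p(\R^d)}$.

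For $d>2$, taking $p=2$ gives $p^\ast=\tfrac{2d}{d-2}$ and $\norm{Ev}_{L^{p^\ast}(\R^d)}\le c\norm{Ev}_{H^1(\R^d)}\le c\norm{v}_{H^1(\Omega)}$; restricting $Ev$ to $\Omega$ proves the claim. For $d=2$ the endpoint $p=d$ is not admissible, so given a finite exponent $q$ one fixes $p\in[\tfrac{2q}{q+2},2)$, for which $\tfrac{2p}{2-p}\ge q$; since $|\Omega|<\infty$, Hölder gives $\norm{v}_{L^q(\Omega)}\le c\norm{v}_{L^{2p/(2-p)}(\Omega)}\le c\norm{Ev}_{L^{2p/(2-p)}(\R^2)}\le c\norm{\nabla Ev}_{L^p(\R^2)}\le c\norm{Ev}_{W^{1,p}(\R^2)}\le c\norm{v}_{W^{1,p}(\Omega)}\le c\norm{v}_{H^1(\Omega)}$, the last step again using $|\Omega|<\infty$; the remaining case $q\le2$ follows from $\norm{v}_{L^q(\Omega)}\le|\Omega|^{1/q-1/2}\norm{v}_{L^2(\Omega)}$. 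Throughout, density of $C_c^\infty$ and boundedness of $E$ are what let us pass from smooth compactly supported functions to arbitrary $v\in H^1(\Omega)$.
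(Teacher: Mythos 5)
Your proof is correct, but it is not the paper's argument, because the paper does not prove this proposition at all: it is quoted as a standard result with the pointer to \cite[Theorem~4.12]{Adams2003}. What you have written out is precisely the classical Gagliardo--Nirenberg--Sobolev proof that such a reference would contain: the base case $p=1$ by iterated one--dimensional integration and generalized H\"older, the bootstrap to $p<d$ by applying the base case to $|w|^{\gamma}$ with $\gamma=p(d-1)/(d-p)$ (and your exponent identity $\gamma d/(d-1)=(\gamma-1)p/(p-1)=dp/(d-p)$ does check out), followed by a Lipschitz extension operator and density of $C_c^\infty(\R^d)$ to pass from $\R^d$ back to $\Omega$. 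Your handling of $d=2$ is also sound: choosing $p\in[\,2q/(q+2),\,2)$ gives $2p/(2-p)\ge q$, and H\"older on the bounded set $\Omega$ absorbs the shift between $W^{1,p}$ and $H^1$ as well as the small exponents $q\le 2$. One point you implicitly get right and that is worth noting: in this paper $H^1_0(\Omega)$ is defined by a vanishing trace only on $\Gamma_D\subsetneq\partial\Omega$, so extension by zero is not available; routing through $H^1(\Omega)$ and the full Lipschitz extension operator, as you do, is genuinely necessary here. In short, the paper buys brevity by citing; your version supplies the underlying proof, and it is correct.
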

The next result can be found in \cite[Proposition 2.4]{Marino2019} or as an exercise in the book \cite[Exercise 4.1]{Troltzsch2010} for functions in $H^1(\Omega)$. However, it is clear from the proof in \cite[Proposition 2.4]{Marino2019} combined with Proposition \ref{prop:sobolev_embedding} that it holds for functions in $H^1_0(\Omega)$. 
\begin{prop}\label{prop:traceinfty}
Let $\Omega \subset \R^d$ be an open bounded domain and $\Gamma\subset \partial \Omega$ be Lipschitz. If $v \in H^1_0(\Omega) \cap L^\infty(\Omega)$, then $ v \in L^\infty(\Gamma)$ and 
\begin{align*}
\norm{ v}_{L^\infty(\Gamma)}   \leq c_0\norm{v}_{L^\infty(\Omega)}
\end{align*}
for  a constant  $c_0>0$.
\end{prop}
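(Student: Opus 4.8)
The plan is to reduce the $L^\infty(\Gamma)$ bound to the continuous embedding $H^1_0(\Omega)\subset L^{p^\ast}(\Omega)$ from Proposition \ref{prop:sobolev_embedding}, following the strategy of \cite[Proposition 2.4]{Marino2019}. Fix $v\in H^1_0(\Omega)\cap L^\infty(\Omega)$ and set $M=\norm{v}_{L^\infty(\Omega)}$; we may assume $M>0$, otherwise $v=0$ a.e.\ in $\Omega$, hence (by the trace being well defined on $H^1(\Omega)$ and the density of $C^1(\bar\Omega)$) the trace of $v$ on $\Gamma$ vanishes $\sigma$-a.e., and there is nothing to prove. For the general case I would argue by contradiction: suppose $\norm{v}_{L^\infty(\Gamma)}>c_0 M$ for every admissible constant; equivalently, for an arbitrarily large $\kappa>0$ the set $\{x\in\Gamma : |v(x)|>\kappa M\}$ has positive $\sigma$-measure. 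The idea is that a trace which is large on a set of positive surface measure forces the corresponding Sobolev norm to blow up, contradicting the fixed bound $\norm{v}_{L^{p^\ast}(\Omega)}\le c_0\norm{v}_{H^1(\Omega)}$.

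Concretely, I would consider the truncated functions $w_k=(|v|-\kappa M)_+$ for $\kappa>1$. Since $|v|\le M$ a.e.\ in $\Omega$ and $\kappa>1$, we have $w_k\equiv 0$ a.e.\ in $\Omega$, so $w_k\in H^1_0(\Omega)$ with $\norm{w_k}_{H^1(\Omega)}=0$. On the other hand, the trace operator commutes with this truncation (the map $s\mapsto (|s|-\kappa M)_+$ is Lipschitz and vanishes at $0$, so it maps $H^1_0(\Omega)$ into itself and is compatible with traces — this is exactly the lemma used in the proof of \cite[Proposition 2.4]{Marino2019}), hence the trace of $w_k$ on $\Gamma$ equals $(|v|-\kappa M)_+$ $\sigma$-a.e.\ on $\Gamma$. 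Applying Proposition \ref{prop:sobolev_embedding} to $w_k$ gives $\norm{w_k}_{L^{p^\ast}(\Omega)}\le c_0\norm{w_k}_{H^1(\Omega)}=0$; combining with a trace estimate of the type in Proposition \ref{prop:trace} (or directly the continuity of the trace $H^1_0(\Omega)\to L^2(\Gamma)$) forces $(|v|-\kappa M)_+=0$ $\sigma$-a.e.\ on $\Gamma$ whenever $\kappa>1$. Letting $\kappa\downarrow 1$ yields $|v|\le M$ $\sigma$-a.e.\ on $\Gamma$, i.e.\ $\norm{v}_{L^\infty(\Gamma)}\le\norm{v}_{L^\infty(\Omega)}$, which is the claim with $c_0=1$ (and in particular with any $c_0\ge 1$).

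I expect the main obstacle to be the bookkeeping around the trace of the truncated function: one must justify that $v\in H^1_0(\Omega)\cap L^\infty(\Omega)$ implies the nonlinear truncation $(|v|-\kappa M)_+$ again lies in $H^1_0(\Omega)$ and that its trace is obtained by composing $\gamma(v)$ with the same truncation $\sigma$-a.e. This is a standard chain-rule/Lipschitz-composition fact for Sobolev functions together with continuity of the trace operator, but it is the step where the argument is not purely formal. An alternative, if one wants to cite rather than re-prove, is simply to invoke \cite[Proposition 2.4]{Marino2019} for $v\in H^1(\Omega)$ and note, as the text already remarks, that its proof combined with Proposition \ref{prop:sobolev_embedding} applies verbatim to $v\in H^1_0(\Omega)$, since the only embedding used there is $H^1_0(\Omega)\subset L^{p^\ast}(\Omega)$, which holds for all $d\ge 2$ by Proposition \ref{prop:sobolev_embedding}.
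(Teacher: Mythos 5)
Your argument is correct and fills in detail the paper leaves implicit: the paper itself does not prove Proposition \ref{prop:traceinfty} but simply cites \cite[Proposition 2.4]{Marino2019} and \cite[Exercise 4.1]{Troltzsch2010}, remarking that the $H^1(\Omega)$ version transfers to $H^1_0(\Omega)$ via Proposition \ref{prop:sobolev_embedding}. The core of your truncation argument is sound and in fact gives the sharp constant $c_0=1$: with $M=\norm{v}_{L^\infty(\Omega)}$ and $w_\kappa=(|v|-\kappa M)_+$ for $\kappa>1$, one has $w_\kappa=0$ a.e.\ in $\Omega$, so $w_\kappa$ is the zero element of $H^1_0(\Omega)$ and its trace vanishes $\sigma$-a.e.\ on $\Gamma$; since the trace commutes with the Lipschitz map $s\mapsto(|s|-\kappa M)_+$ (which vanishes at $0$), this gives $(|v|-\kappa M)_+=0$ $\sigma$-a.e.\ on $\Gamma$, and letting $\kappa\downarrow 1$ yields $|v|\le M$ $\sigma$-a.e.\ on $\Gamma$. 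You rightly flag the commutation of trace with Lipschitz composition as the one nontrivial ingredient; it can be justified by approximating $v$ in $H^1(\Omega)$ by $v_n\in C^1(\Bar{\Omega})$, noting $\gamma(F(v_n))=F(\gamma(v_n))$ pointwise for continuous $v_n$, and passing to the limit using boundedness of the trace operator on the $H^1$ side and the Lipschitz continuity of $F$ on the $L^2(\Gamma)$ side.

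Two minor clean-ups would tighten the write-up. The contradiction framing at the start is a red herring: the truncation argument directly establishes $|v|\le\kappa M$ $\sigma$-a.e.\ for every $\kappa>1$, so there is no need to suppose the estimate fails. And the appeal to Proposition \ref{prop:sobolev_embedding} together with Proposition \ref{prop:trace} in the middle is vacuous: once $w_\kappa$ is identified as the zero element of $H^1_0(\Omega)$, its trace is zero by linearity of the trace operator, without invoking any norm inequality. The Sobolev embedding is what the paper says is needed to port the \emph{cited} $H^1(\Omega)$ proof over to $H^1_0(\Omega)$; your truncation shortcut does not actually use it.
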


\subsection{Preliminary results}
We include some preliminary results needed to find uniform a priori estimates for the approximated solutions, two results usually used in the Moser iteration, and lastly, a result used to prove that the approximated problem converges to \eqref{eq:eq1}.
\begin{lem}\label{lemma:estn}
Let $X$ be a Banach space and $L^\infty_T X$ be as in Definition \ref{def:Bochner}. If $0<L<1$ and
\begin{align*}
\norm{v^n}_{L^\infty_T X} \leq  c\norm{v_0}_X + L \norm{v^{n-1}}_{L^\infty_T X}
\end{align*}
with $v^0:= v_0 \in X$ for any constant $c>0$, then
\begin{align*}
\norm{v^n}_{L^\infty_T X} \leq \Tilde{c}\norm{v_0}_X,
\end{align*}
where $\Tilde{c}$ is a positive constant independent of $n$.
\end{lem}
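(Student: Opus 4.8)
The plan is to unroll the recursion explicitly. Iterating the hypothesis $n$ times starting from $v^0 = v_0$, I would obtain
\[
\norm{v^n}_{L^\infty_T X} \leq c\norm{v_0}_X \bigl(1 + L + L^2 + \cdots + L^{n-1}\bigr) + L^n \norm{v^0}_X.
\]
Since $0 < L < 1$, the geometric sum is bounded by $\frac{1}{1-L}$, and $L^n \leq 1$, so the right-hand side is at most $\bigl(\frac{c}{1-L} + 1\bigr)\norm{v_0}_X$. Setting $\Tilde{c} := \frac{c}{1-L} + 1$, which depends only on $c$ and $L$ and not on $n$, gives the claim.

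To make the unrolling rigorous I would argue by induction on $n$, proving the intermediate bound
\[
\norm{v^n}_{L^\infty_T X} \leq c\norm{v_0}_X \sum_{k=0}^{n-1} L^k + L^n\norm{v_0}_X.
\]
The base case $n=0$ is the trivial identity $\norm{v^0}_{L^\infty_T X} = \norm{v_0}_X$ (the empty sum being zero). For the inductive step, I would apply the hypothesis to $v^{n+1}$, substitute the inductive bound for $\norm{v^n}_{L^\infty_T X}$, and use $L \cdot L^k = L^{k+1}$ together with $L\cdot L^n = L^{n+1}$ to re-index the sum; a short computation confirms the bound for $n+1$. Then pass to the limit in the geometric series as above.

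There is essentially no obstacle here — the only minor point to be careful about is that the constant $c$ in the hypothesis is the same at every step (so that it does not accumulate), which is exactly what the statement asserts, and that $X$ being a Banach space plays no real role beyond making $\norm{\cdot}_{L^\infty_T X}$ a genuine norm; the inequality manipulations are purely scalar. The result is a standard fixed-point-style a priori estimate, and I expect the author's proof to be just the two lines of geometric summation above, possibly without even spelling out the induction.
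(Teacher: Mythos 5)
Your proof is correct and matches the paper's own argument almost verbatim: unroll the recursion to a geometric sum, bound it by $\frac{1}{1-L}$, and set $\Tilde{c}$ accordingly. If anything, your choice $\Tilde{c} = \frac{c}{1-L} + 1$ is slightly cleaner than the paper's, which silently absorbs the leftover $L^n\norm{v_0}_X$ term into $c\norm{v_0}_X\sum_{k=0}^\infty L^k$ (valid only when $c \geq 1-L$), whereas your extra $+1$ handles arbitrary $c>0$ without that caveat.
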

\begin{proof} 
We observe that
\begin{align*}
\norm{v^n}_{L^\infty_T X} &\leq c\norm{v_0}_X + L \norm{v^{n-1}}_{L^\infty_T X} \leq c(1+L)\norm{v_0}_X + L^2 \norm{v^{n-2}}_{L^\infty_T X}.
\end{align*}
By induction on $n\in \N$, it follows that
\begin{align*}
\norm{v^n}_{L^\infty_T X}  
&\leq c\norm{v_0}_X\sum_{k=0}^{n-1} L^k +L^n \norm{v_0}_{X}  \leq c\norm{v_0}_X\sum_{k=0}^{\infty} L^k.
\end{align*}
Since $0<L<1$, 
\begin{align*}
\norm{v^n}_{L^\infty_T X} &\leq \frac{c}{1-L}\norm{v_0}_X.
\end{align*}
Denoting $\Tilde{c}:= \frac{c}{1-L}$ concludes the proof.
\end{proof}
The next two results are commonly used in the application of the Moser iteration.
\begin{prop}\label{prop:nabla}
Let $v \in H^1(\Omega)$, then $v_+,v_- \in H^1(\Omega)$ and
\begin{align*}
\nabla v_+ &= \mathds{1}_{\{v> 0\}}
\nabla v 
&\text{and}&
&\nabla v_- = - \mathds{1}_{\{v< 0\}}
\nabla v  .
\end{align*}
Moreover,  if $v\in H^1_0(\Omega)$, then $v_+,v_- \in H^1_0(\Omega)$.
\end{prop}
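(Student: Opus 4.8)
This is the standard fact that the positive and negative parts of a Sobolev function are again Sobolev, with the expected chain-rule formula for the gradient. The plan is to reduce to a smooth approximation argument. First I would treat $v_+$ and note $v_- = (-v)_+$, so it suffices to handle $v_+$. For $\varepsilon > 0$ introduce the smooth, convex approximation
\begin{equation*}
F_\varepsilon(s) = \begin{cases} \sqrt{s^2 + \varepsilon^2} - \varepsilon, & s > 0,\\ 0, & s \leq 0,\end{cases}
\end{equation*}
which is $C^1$ on $\R$ with $0 \leq F_\varepsilon'(s) = \mathds{1}_{\{s>0\}} \, s/\sqrt{s^2+\varepsilon^2} \leq 1$ bounded and Lipschitz, and $F_\varepsilon(0)=0$. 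Since $v \in H^1(\Omega)$, the chain rule for $C^1$ functions with bounded derivative gives $F_\varepsilon(v) \in H^1(\Omega)$ with $\nabla F_\varepsilon(v) = F_\varepsilon'(v)\nabla v = \mathds{1}_{\{v>0\}} \frac{v}{\sqrt{v^2+\varepsilon^2}} \nabla v$.

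Next I would pass to the limit $\varepsilon \to 0$. Pointwise a.e.\ we have $F_\varepsilon(v) \to v_+$ and, using $\mathds{1}_{\{v>0\}} v/\sqrt{v^2+\varepsilon^2} \to \mathds{1}_{\{v>0\}}$, also $\mathds{1}_{\{v>0\}}\frac{v}{\sqrt{v^2+\varepsilon^2}}\nabla v \to \mathds{1}_{\{v>0\}}\nabla v$ a.e.; moreover $|F_\varepsilon(v)| \leq v_+ \leq |v| \in L^2(\Omega)$ and $|\mathds{1}_{\{v>0\}}\frac{v}{\sqrt{v^2+\varepsilon^2}}\nabla v| \leq |\nabla v| \in L^2(\Omega)$, so dominated convergence gives $F_\varepsilon(v) \to v_+$ and $\nabla F_\varepsilon(v) \to \mathds{1}_{\{v>0\}}\nabla v$ in $L^2(\Omega)$. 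Since the distributional gradient is continuous with respect to $L^2$ convergence, this identifies $v_+ \in H^1(\Omega)$ with $\nabla v_+ = \mathds{1}_{\{v>0\}}\nabla v$. Applying this to $-v$ yields $v_- = (-v)_+ \in H^1(\Omega)$ with $\nabla v_- = \mathds{1}_{\{-v>0\}}\nabla(-v) = -\mathds{1}_{\{v<0\}}\nabla v$.

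Finally, for the statement about $H^1_0(\Omega)$: if $v \in H^1_0(\Omega)$, then $v = 0$ on $\Gamma_D$ in the trace sense. One route is to approximate $v$ in $H^1(\Omega)$ by functions $v_k \in C^1(\bar\Omega)$ vanishing near $\Gamma_D$ (or, more robustly, to argue via the characterization of $H^1_0$ as functions whose trace on $\Gamma_D$ vanishes and note that the trace operator commutes with the continuous map $v \mapsto v_+$, since $v_k \to v$ in $H^1$ implies $(v_k)_+ \to v_+$ in $H^1$ by the $L^2$-Lipschitz bound $\|(a)_+ - (b)_+\|_{L^2} \le \|a-b\|_{L^2}$ together with the gradient convergence just established, and traces converge in $L^2(\Gamma_D)$). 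Either way the trace of $v_+$ on $\Gamma_D$ is the positive part of the trace of $v$, which is $0$, so $v_+ \in H^1_0(\Omega)$, and likewise $v_-$. The main technical point to be careful about is this last trace-compatibility step; the interior chain-rule part is entirely routine.
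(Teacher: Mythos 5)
Your argument is the standard mollification proof and agrees with the references the paper cites (Evans, \emph{Measure Theory and Fine Properties of Functions}, Theorem~4.4, and Dr\'abek et~al.); the paper does not itself supply a proof, so this is the natural route. The interior chain-rule part, via $F_\varepsilon$ and dominated convergence, is complete and correct. The only place with a real gap is the trace step, and you rightly flag it: the claim that $v_k\to v$ in $H^1(\Omega)$ forces $(v_k)_+\to v_+$ in $H^1(\Omega)$ does \emph{not} follow from ``the gradient convergence just established''---what you proved is $\nabla F_\varepsilon(v)\to\nabla v_+$ in $L^2(\Omega)$ as $\varepsilon\to 0$, which is a different limit from $\nabla(v_k)_+\to\nabla v_+$ as $k\to\infty$. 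The $H^1$-continuity of $u\mapsto u_+$ is true but requires a separate argument (the delicate part is the convergence $\mathds{1}_{\{v_k>0\}}\nabla v_k\to\mathds{1}_{\{v>0\}}\nabla v$ in $L^2$, where one exploits that $\nabla v=0$ a.e.\ on $\{v=0\}$; see Kinderlehrer--Stampacchia). Once that is filled in, continuity of the trace operator gives $\gamma(v_+)=(\gamma v)_+=0$ on $\Gamma_D$ and hence $v_+\in H^1_0(\Omega)$, as you outline.
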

\begin{lem}\label{lemma:passingthelimit_bound}
Let $\Omega \subset \R^d$ with $d\geq 2$ be a bounded domain with Lipschitz boundary $\Gamma \subset \partial \Omega$.  Let $v\in L^p(\Omega)$ with $v\geq 0$ and $1<p<\infty$ such that
\begin{align*}
\norm{v}_{L^{r_\ell}(\Omega)} \leq c
\end{align*}
for a universal constant $c>0$ and a sequence $\{r_\ell\}_\ell \subset (1,\infty)$ with  $r_\ell \rightarrow \infty$ as $\ell \rightarrow \infty$. Then $v\in L^\infty(\Omega)$.
\end{lem}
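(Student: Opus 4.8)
The statement is the classical fact that $L^p$-bounds along a sequence of exponents tending to infinity force membership in $L^\infty$. I would prove it by contradiction, exploiting that the measure of the super-level sets cannot shrink fast enough if $v$ were unbounded.

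\textbf{Setup.} Suppose, for contradiction, that $v \notin L^\infty(\Omega)$. Then for every $M>0$ the set $A_M = \{x\in\Omega : v(x) > M\}$ has positive Lebesgue measure $|A_M|>0$. Fix $M$ large enough that $|A_M|>0$; I will choose $M$ precisely below.

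\textbf{Core estimate.} For each $\ell$ we have
\begin{align*}
c^{r_\ell} \geq \norm{v}_{L^{r_\ell}(\Omega)}^{r_\ell} = \int_\Omega v^{r_\ell}\d x \geq \int_{A_M} v^{r_\ell}\d x \geq M^{r_\ell} |A_M|,
\end{align*}
so that $|A_M| \leq (c/M)^{r_\ell}$ for all $\ell$. Now choose $M > c$ with $|A_M|>0$: this is possible, since if $|A_M|=0$ for all $M>c$ then $v \leq c$ a.e.\ and $v\in L^\infty(\Omega)$, contrary to assumption. (Here I implicitly use that $|A_M|$ is nonincreasing in $M$, so "$v\notin L^\infty$" means $|A_M|>0$ for all finite $M$, in particular for some $M>c$.) With such an $M$ fixed, $0 < c/M < 1$, and letting $\ell\to\infty$ along $r_\ell\to\infty$ gives $(c/M)^{r_\ell}\to 0$, forcing $|A_M|=0$, a contradiction. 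Hence $v\in L^\infty(\Omega)$, and in fact $\norm{v}_{L^\infty(\Omega)}\leq c$.

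\textbf{Main obstacle.} There is essentially no analytic obstacle here; the only point requiring a little care is the logical structure of the choice of $M$ — one must phrase "$v$ is unbounded" correctly as "$|\{v>M\}|>0$ for every finite $M$" (using monotonicity of $M\mapsto|A_M|$ and that $v\ge 0$), and then it is exactly the threshold $M>c$ that makes the geometric factor $(c/M)^{r_\ell}$ decay. The hypotheses $v\ge 0$ and $1<p<\infty$ together with $v\in L^p(\Omega)$ are used only to make all the integrals $\int_\Omega v^{r_\ell}$ meaningful and the manipulations legitimate; the Lipschitz boundary hypothesis on $\Gamma$ plays no role in this particular lemma and is presumably inherited from the ambient standing assumptions.
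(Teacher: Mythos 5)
Your proof is correct, and it is the standard Chebyshev-type argument: from $\norm{v}_{L^{r_\ell}(\Omega)} \leq c$ one gets $|\{v>M\}| \leq (c/M)^{r_\ell} \to 0$ for any $M>c$, so $\norm{v}_{L^\infty(\Omega)} \leq c$. The paper itself does not include a proof of this lemma but defers to \cite[Proposition 2.2]{Marino2019} and \cite[Lemma 3.2]{Drabek1997}, where the argument is essentially the same; your observations that the Lipschitz-boundary hypothesis plays no role here, and that the remaining hypotheses merely ensure the integrals are well defined, are also correct.
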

The proof of Proposition \ref{prop:nabla} can be found in \cite[Theorem 4.4]{Evans2015} and \cite[Lemma 3.1]{Drabek2022}, while the proof of Lemma \ref{lemma:passingthelimit_bound} can be found in, e.g., \cite[Proposition 2.2]{Marino2019} (see also the proof of \cite[Lemma 3.2]{Drabek1997}). The result underneath will be used to verify that the approximated solutions indeed solves \eqref{eq:eq1}.
\begin{lem}\label{lemma:strongtoae}
Let $X$ be a Banach space and $L^\infty_TX$ be as in Definition \ref{def:Bochner}. If $v^n \rightarrow v$ strongly in $L^\infty_TX$, then, up to a subsequence,
\begin{align*}
v^n(t) \rightarrow v(t) \text{ strongly in } X \text{ for a.e. } t\in (0,T).
\end{align*}
\end{lem}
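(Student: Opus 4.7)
The plan is to extract the pointwise almost-everywhere convergence directly from the essential-supremum definition of the $L^\infty_T X$ norm, using nothing more than a countable union of null sets; in fact no subsequence extraction will be required.

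First I would fix measurable representatives of the equivalence classes $v^n$ and $v$, so that $t \mapsto \norm{v^n(t) - v(t)}_X$ is a well-defined measurable function on $(0,T)$ for each $n \in \N$. By the definition of the essential supremum underlying Definition \ref{def:Bochner}, for every $n$ there exists a null set $E_n \subset (0,T)$ such that
\[
\norm{v^n(t) - v(t)}_X \leq \norm{v^n - v}_{L^\infty_T X} \quad \text{for all } t \in (0,T) \setminus E_n.
\]

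Next I would set $N := \bigcup_{n \in \N} E_n$. As a countable union of null sets, $N$ is itself a null set. For every $t \in (0,T) \setminus N$ and every $n \in \N$ the displayed bound holds simultaneously; letting $n \to \infty$ and invoking the hypothesis $\norm{v^n - v}_{L^\infty_T X} \to 0$ gives $v^n(t) \to v(t)$ strongly in $X$ for every $t \in (0,T) \setminus N$, which is the desired a.e. convergence.

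There is essentially no serious obstacle here. The only minor subtlety is that pointwise values are defined only up to a choice of representative; this is harmless, since replacing a representative modifies $t \mapsto \norm{v^n(t) - v(t)}_X$ on a null set that can simply be absorbed into $N$. Consequently, the ``up to a subsequence'' qualifier in the statement is a safe conservative weakening — the argument actually delivers convergence of the full sequence almost everywhere — presumably included to mirror the familiar situation in $L^p$ for $p < \infty$, where a subsequence genuinely is required.
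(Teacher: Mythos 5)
Your proof is correct and takes a genuinely different — and cleaner — route than the paper. The paper argues by contradiction: it supposes the a.e.\ convergence (along some subsequence) fails, asserts that this yields an interval $I$ of positive measure and an $\epsilon>0$ with $\norm{v^n(t)-v(t)}_X>\epsilon$ for all large $n$ and all $t\in I$, and derives $\underset{t}{\mathrm{ess\,sup}}\,\norm{v^n(t)-v(t)}_X>\epsilon$, contradicting the hypothesis. You instead argue directly: for each $n$, the definition of the essential supremum provides a null set $E_n$ outside which $\norm{v^n(t)-v(t)}_X\le\norm{v^n-v}_{L^\infty_TX}$; taking $N=\bigcup_n E_n$ (still null) and letting $n\to\infty$ gives pointwise convergence on $(0,T)\setminus N$. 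Your approach has two advantages: it avoids a somewhat loosely formulated negation step (the paper's ``does not hold'' is negated into an overly strong uniform statement on an interval, which is not the logically exact complement), and it delivers convergence of the \emph{entire} sequence a.e., which, as you correctly observe, makes the ``up to a subsequence'' qualifier superfluous for $p=\infty$ — unlike the genuine $L^p$ ($p<\infty$) case that the phrasing appears to be borrowed from. The only hypothesis you implicitly use is that $t\mapsto\norm{v^n(t)-v(t)}_X$ is measurable, which is guaranteed by the (strong/Bochner) measurability built into Definition~\ref{def:Bochner}, so there is no gap.
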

The proof is similar to the first part of the proof in \cite[Lemma 13]{Zeng2018}, but we give the details for clarity and the sake of completeness.
\begin{proof}
With a proof of contraction, we claim that the sequence $\{v^n(t)\}_n \subset X$ has a subsequence that converges to $v(t) \in X$ for a.e. $t\in (0,T)$. If it does not hold, then there exists an interval $I\subset (0,T)$ with nonzero Lebesgue measure and a $\epsilon>0$ so that for all $N\in \N$, it implies $\norm{v^n(t)- v(t)}_X > \epsilon$ for all $n > N$ and $t\in I$. Consequently,
\begin{align*}
\underset{t\in(0,T)}{\mathrm{ess \: sup}}\norm{v^n(t)- v(t)}_X  \geq \underset{t\in I}{\mathrm{ess \: sup}} \norm{v^n(t) - v(t)}_X   >  \epsilon .
\end{align*}   
Thus, $	v^n(t) \rightarrow v(t)$  strongly in  $X$ (up to a subsequence) for a.e.  $t\in (0,T)$.
\end{proof}

\subsection{Auxiliary problem}
We also need to define an auxiliary problem to \eqref{eq:eq1a}, which will help us to define the iterative scheme in the proof of Theorem \ref{thm:mainresult}.
Let
\begin{subequations}
\begin{equation}\label{eq:assumptiona}
a : H^1_0(\Omega) \times H^1_0(\Omega) \rightarrow \R \text{ be bilinear, symmetric, coercive, and bounded,}
\end{equation} 
\begin{equation}\label{eq:assumption_psi}
\psi : H^1_0(\Omega) \rightarrow \R \text{ be proper, convex, and lower semicontinuous,}
\end{equation} 
and 
\begin{align}\label{eq:assumption_f}
f \in L^\infty_T H^1_0(\Omega).
\end{align}
\end{subequations}
We then consider the following problem.
\begin{prob}\label{prob:prelem}
Find $u \in L^\infty_T H^1_0(\Omega)$ such that
\begin{align*}
a(u(t),v-u(t)) + \psi(v) - \psi(u(t)) \geq \scalarprod{f(t)}{v-u(t)}_{H^1(\Omega)}
\end{align*}
for all $v\in H^1_0(\Omega)$ and a.e. $t\in (0,T)$.
\end{prob}
In \cite[Theorem 3.12]{Sofonea2009}, they proved the following existence and uniqueness result for Problem \ref{prob:prelem}.
\begin{thm}\label{thm:pre_existence_uniqueness}
Assume that \eqref{eq:assumptiona}-\eqref{eq:assumption_f} hold. Then, Problem \ref{prob:prelem} has a unique solution $u\in L^\infty_T H^1_0(\Omega)$.
\end{thm}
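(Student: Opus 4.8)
The plan is to read Problem \ref{prob:prelem} as a family, indexed by $t\in(0,T)$, of stationary elliptic variational inequalities of the second kind, to solve each of them by the classical variational argument, and then to recover the required time regularity from the continuous dependence of the stationary solution on the datum. So first I would fix $t\in(0,T)$ and look for $u(t)\in H^1_0(\Omega)$ satisfying
\[
a(u(t),v-u(t)) + \psi(v) - \psi(u(t)) \geq \scalarprod{f(t)}{v-u(t)}_{H^1(\Omega)} \qquad \text{for all } v\in H^1_0(\Omega).
\]
Since $a$ is bilinear, symmetric, bounded and coercive by \eqref{eq:assumptiona} (say $a(v,v)\geq m_a\norm{v}_{H^1(\Omega)}^2$), and $\psi$ is proper, convex and lower semicontinuous by \eqref{eq:assumption_psi}, the functional $J(v) = \tfrac12 a(v,v) + \psi(v) - \scalarprod{f(t)}{v}_{H^1(\Omega)}$ is proper, strictly convex, lower semicontinuous and coercive on $H^1_0(\Omega)$; coercivity of $J$ uses $\tfrac12 a(v,v)\geq \tfrac{m_a}{2}\norm{v}_{H^1(\Omega)}^2$ together with the affine lower bound for $\psi$ supplied by convexity and lower semicontinuity. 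By the direct method $J$ attains a unique minimizer $u(t)\in H^1_0(\Omega)$, and differentiating $s\mapsto J(u(t)+s(v-u(t)))$ at $s=0^+$ for $v$ in the effective domain of $\psi$ (the other $v$ being trivial because $\psi(v)=+\infty$ there) shows $u(t)$ solves the inequality above. This is precisely \cite[Theorem 3.12]{Sofonea2009} applied pointwise in $t$, which I would invoke directly, the argument just sketched being the justification.

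Next I would record the Lipschitz dependence on data. If $u_1,u_2\in H^1_0(\Omega)$ solve the stationary inequality with data $f_1,f_2$, then testing the inequality for $u_1$ with $v=u_2$, the one for $u_2$ with $v=u_1$, and adding, the $\psi$-terms cancel and one gets
\[
a(u_1-u_2,u_1-u_2) \leq \scalarprod{f_1-f_2}{u_1-u_2}_{H^1(\Omega)},
\]
so coercivity yields $\norm{u_1-u_2}_{H^1(\Omega)}\leq m_a^{-1}\norm{f_1-f_2}_{H^1(\Omega)}$. In particular the stationary solution is unique, which gives the pointwise uniqueness of $u(t)$; and choosing $f_2=0$ gives $\norm{u(t)}_{H^1(\Omega)}\leq \norm{u_\ast}_{H^1(\Omega)} + m_a^{-1}\norm{f(t)}_{H^1(\Omega)}$, where $u_\ast$ is the (well-defined, by the first step) solution with zero datum.

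Finally, writing $S:H^1_0(\Omega)\to H^1_0(\Omega)$ for the solution operator, which is Lipschitz hence continuous, the map $t\mapsto u(t)=S(f(t))$ is strongly measurable, being the composition of the strongly measurable map $t\mapsto f(t)$ (from $f\in L^\infty_T H^1_0(\Omega)$) with a continuous map on the separable Hilbert space $H^1_0(\Omega)$. The bound of the previous paragraph then gives $\underset{t\in(0,T)}{\mathrm{ess \: sup}}\,\norm{u(t)}_{H^1(\Omega)} \leq \norm{u_\ast}_{H^1(\Omega)} + m_a^{-1}\norm{f}_{L^\infty_T H^1_0(\Omega)} < \infty$, so $u\in L^\infty_T H^1_0(\Omega)$, and uniqueness in this class is immediate from the pointwise uniqueness already established. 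The only genuinely delicate point is the strong measurability of $t\mapsto u(t)$; everything else is the classical variational-inequality machinery. I therefore expect the real ``obstacle'' here to be bookkeeping rather than a substantive difficulty, which is presumably why the statement is quoted from \cite{Sofonea2009} rather than proved from scratch.
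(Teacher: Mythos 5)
The paper does not prove this statement---it quotes it verbatim from \cite[Theorem 3.12]{Sofonea2009}---so there is no in-paper argument to compare against. Your sketch is nevertheless a correct reconstruction of the standard proof: pointwise-in-$t$ energy minimization of $J(v)=\tfrac12 a(v,v)+\psi(v)-\scalarprod{f(t)}{v}_{H^1(\Omega)}$ (coercivity via the continuous affine minorant of the proper convex lsc $\psi$, uniqueness from the strict convexity contributed by $a$), the Lipschitz estimate $\norm{u_1-u_2}_{H^1(\Omega)}\le m_a^{-1}\norm{f_1-f_2}_{H^1(\Omega)}$ by testing the two inequalities against each other so that the $\psi$-terms cancel, and strong measurability of $t\mapsto S(f(t))$ because a continuous map sends simple functions to simple functions and preserves a.e.\ limits. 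One small caveat: ``differentiating'' $s\mapsto J(u+s(v-u))$ is a slight abuse since $\psi$ need not be differentiable; the clean form of that step takes the one-sided derivative of the quadratic part only and invokes the convexity inequality $\psi(u+s(v-u))\le(1-s)\psi(u)+s\psi(v)$ before dividing by $s$ and letting $s\to0^+$, which is evidently what you intend.
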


\section{Main result}\label{sec:mainresult}
In this section, we state our main result. First, we present the conditions we wish to investigate \eqref{eq:eq1}. That is, the structure on $\varphi$, $\phi$, and $H$. These assumptions are inspired by new applications (see Appendix \ref{appendix:application}).
\\
\noindent
$(H1)$: \label{assumptionvarphi}  $\varphi : \Gamma_C  \times \R \times \R \times \R \rightarrow \R \text{ is such that }$
\begin{itemize}  
\item[$(i)$] $x\mapsto \varphi(x,y,r,v) \text{ is measurable in } \Gamma_C$ for all $y,r,v \in \R$.
\label{list:varphi1}
\item[$(ii)$] $\text{There is $c_{j\varphi} \geq 0$ for $j=1,2$ such that }$ 
\begin{align*}
& \varphi (x,y_1,r_1, v_2) - \varphi (x,y_1,r_1, v_1) +\varphi (x,y_2,r_2,v_1)  - \varphi (x,y_2,r_2, v_2) \\
&\leq c_{1\varphi} |y_1-y_2||v_1-v_2| +c_{2\varphi} |r_1-r_2| |v_1-v_2| 
\end{align*}
$\text{for all } y_i,z_i,r_i,v_i\in \R$ with $i=1,2$ and a.e. $x\in \Gamma_C$.
\item[$(iii)$] There is a $c_{0\varphi} \geq 0$ so that $|\varphi (x,0,0, v_1) - \varphi (x,0,0, v_2)| \leq c_{0\varphi} |v_1-v_2|$ $\text{for all } v_1,v_2 \in \R$ and a.e. $x\in \Gamma_C$.
\label{list:varphi2}
\end{itemize}
$(H2)$: \label{assumptiophi}  $\phi : \Gamma_C  \times \R \times \R \times \R \rightarrow \R \text{ is such that }$
\begin{itemize}  
\item[$(i)$] $x\mapsto \phi(x,y,r,v) \text{ is measurable in } \Gamma_C$ for all $y,r,v \in \R$.
\label{list:phi1}
\item[$(ii)$] $\text{There is $c_{j\phi} \geq 0$ for $j=1,2,3$ such that }$ 
\begin{align*}
& \phi (x,y_1,r_1, v_2) - \phi (x,y_1,r_1, v_1) +\phi (x,y_2,r_2,v_1)  - \phi (x,y_2,r_2, v_2) \\
&\leq c_{1\phi} |y_1|^2|r_1-r_2| |v_1-v_2|  + c_{2\phi}|y_2||r_2||y_1-y_2||v_1-v_2| + c_{3\phi}|y_1||r_2||y_1-y_2||v_1-v_2|
\end{align*}
$\text{for all } y_i,z_i,r_i,v_i\in \R$ with $i=1,2$ and a.e. $x\in \Gamma_C$.
\item[$(iii)$]  $\phi (x,0,0, v) =0$ $\text{for all } v \in \R$ and a.e. $x\in \Gamma_C$.
\label{list:phi2}
\end{itemize}
$(H3)$: \label{assumptionH}  $ H : \Gamma_C \times \mathbb{R} \times \mathbb{R} \rightarrow \mathbb{R} \text{ is such that }$
\begin{itemize}
\item[$(i)$] The mapping $x\mapsto  H(x,y,r)$ is measurable on $\Gamma_C$ for all $y,r \in \mathbb{R}$.
\item[$(ii)$] There exists $c_{j \beta}\geq 0$ for $j=1,2,3$ such that $(y,r) \mapsto   H(x,y,r)$ satisfies 
\begin{align*}
&|  H(x,y_1,r_1)  -   H (x,y_2,r_2)|\leq  
( c_{1 \beta}|y_1||r_1|+c_{2 \beta}|y_1||r_2|)|r_1-r_2|  +  c_{3\beta}|r_2|^2|y_1-y_2| 
\end{align*}
for all $y_i,r_i \in \mathbb{R}$ for $i=1,2$, a.e. $x\in \Gamma_C$.
\item[$(iii)$] There is a $c_{0\beta}\geq 0$ so that $ H (x,0,0) =c_{0\beta}$ for a.e. $x\in \Gamma_C$.\label{list:H_0}
\end{itemize}
Moreover, we assume that
\begin{subequations}\label{eq:assumptions_mu}
\begin{align}\label{eq:assumptionmu}
\mu_\ast <\mu(x) \text{ for a.e. } x\in \Omega \text{ and }  \mu\in L^\infty(\Omega)
\end{align}
with the smallness assumption
\begin{align}\label{eq:smallness}
\mu_\ast > c_{2\varphi}c_0 +  c_{1\phi}c_0\norm{\beta_0}^2_{L^\infty(\Gamma_C)}.
\end{align}
\end{subequations}
We consider the following regularity on the data 
\begin{align}\label{eq:assumptionondata}
f_0 &\in  L^\infty_TL^\infty(\Omega), & f_N &\in L^\infty_TL^\infty(\Gamma_N), &\text{ and }  & &\beta_0 &\in  L^\infty(\Gamma_C).
\end{align}
\begin{remark}
In practice, \eqref{eq:smallness} is a condition on the elasticity of the material. 
\end{remark}
Before stating the main result, we will do some calculations that will become useful in the proof.
\begin{lem}\label{lemma:calculation}
Under the assumption \hyperref[assumptionvarphi]{$(H1)$}-\hyperref[assumptionH]{$(H3)$}, we have
\begin{align*}
\varphi(x,y, r, v_2)-\varphi(x,y,r, v_1)  &\leq c_{0\varphi} |v_1-v_2| + c_{1\varphi} |y||v_1-v_2| +c_{2\varphi} |r| |v_1-v_2|,\\
\phi(x,y, r, v_2)-\phi(x,y,r, v_1)  &\leq  c_{1\phi} |y|^2|r||v_1-v_2|,\\
|H(x,y, r)|  &\leq c_{0\beta} + c_{3\beta} |y||r|^2
\end{align*}
for all $y,r\in \R$, $v_1,v_2\in \R$, and a.e. $x\in \Gamma_C$. 
\end{lem}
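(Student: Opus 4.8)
The three claimed inequalities in Lemma~\ref{lemma:calculation} are all derived from the corresponding hypotheses \hyperref[assumptionvarphi]{$(H1)$}--\hyperref[assumptionH]{$(H3)$} by the standard trick of specializing the two-point (monotonicity-type) estimates to the case where one of the pairs is $(0,0)$, and then combining with the Lipschitz-at-zero conditions $(iii)$. I would treat each of the three lines separately.

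\textbf{First inequality (on $\varphi$).} In $(H1)(ii)$ set $y_2 = r_2 = 0$, $y_1 = y$, $r_1 = r$, keeping $v_1, v_2$ generic. This yields
\[
\varphi(x,y,r,v_2) - \varphi(x,y,r,v_1) + \varphi(x,0,0,v_1) - \varphi(x,0,0,v_2) \leq c_{1\varphi}|y||v_1-v_2| + c_{2\varphi}|r||v_1-v_2|.
\]
Rearranging and then bounding $\varphi(x,0,0,v_2) - \varphi(x,0,0,v_1) \leq c_{0\varphi}|v_1 - v_2|$ via $(H1)(iii)$ gives exactly the stated bound. I would be slightly careful that the hypothesis $(ii)$ is written with $\varphi$ taking four real arguments after $x$ (the $z_i$ in ``for all $y_i,z_i,r_i,v_i$'' appears to be a typo, since $\varphi$ only has arguments $(x,y,r,v)$); I would just ignore the spurious $z_i$.

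\textbf{Second inequality (on $\phi$).} Identically, in $(H2)(ii)$ put $y_2 = r_2 = 0$; every term on the right-hand side that carries a factor $|y_2|$ or $|r_2|$ vanishes, leaving only $c_{1\phi}|y_1|^2|r_1 - r_2||v_1 - v_2| = c_{1\phi}|y|^2|r||v_1 - v_2|$. On the left, $\phi(x,0,0,v_1) - \phi(x,0,0,v_2) = 0$ by $(H2)(iii)$, so the left-hand side is just $\phi(x,y,r,v_2) - \phi(x,y,r,v_1)$, giving the claim.

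\textbf{Third inequality (on $H$).} Here one does not subtract a $\varphi$-type difference but simply uses the triangle inequality $|H(x,y,r)| \leq |H(x,y,r) - H(x,0,0)| + |H(x,0,0)|$. By $(H3)(iii)$ the last term is $c_{0\beta}$, and by $(H3)(ii)$ with $y_1 = y$, $r_1 = r$, $y_2 = r_2 = 0$, every term on the right carrying $|r_1|$ or $|r_2|$ (i.e.\ the $c_{1\beta}$ and $c_{2\beta}$ terms) vanishes, leaving $c_{3\beta}|r_2|^2|y_1 - y_2|$ — but wait, that has $|r_2| = 0$ too, so I would instead take $y_1 = 0$, $r_1 = 0$, $y_2 = y$, $r_2 = r$: then $|H(x,0,0) - H(x,y,r)| \leq c_{3\beta}|r|^2|0 - y| = c_{3\beta}|y||r|^2$ (the $c_{1\beta}, c_{2\beta}$ terms carry the now-zero factor $|y_1|$). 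Combining gives $|H(x,y,r)| \leq c_{0\beta} + c_{3\beta}|y||r|^2$.

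\textbf{Anticipated difficulty.} There is essentially no analytic obstacle: the proof is pure bookkeeping with the hypotheses. The only thing requiring care is choosing \emph{which} argument to send to zero in each two-point inequality so that the unwanted cross-terms drop out while the desired term survives (as the $H$ case illustrates, the naive choice can kill the term one wants to keep). I would write the three specializations explicitly and leave the one-line rearrangements to the reader.
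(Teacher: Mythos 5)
Your proof is correct and follows essentially the same route as the paper: specialize each two-point estimate in $(H1)(ii)$, $(H2)(ii)$, $(H3)(ii)$ by setting one pair of arguments to zero, then invoke the corresponding condition $(iii)$. You even make the same choice of which pair to zero out in the $H$ case (taking $(y_1,r_1)=(0,0)$ so that the $c_{1\beta},c_{2\beta}$ terms vanish and the $c_{3\beta}$ term survives), which is exactly what the paper does.
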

\begin{proof}
We observe by \hyperref[assumptionvarphi]{$(H1)$}$(ii)$ that
\begin{align*}
&\varphi(x,y, r, v_2)-\varphi(x,y,r, v_1) + \varphi(x,0,0,v_1) - \varphi(x,0,0,v_2)   \\
&\leq  c_{1\varphi} |y||v_1-v_2| +c_{2\varphi} |r| |v_1-v_2|
\end{align*}
for all $y,r\in \R$, $v_1,v_2\in \R$ and a.e. $x\in \Gamma_C$. Rearranging and utilizing \hyperref[assumptionvarphi]{$(H1)$}$(iii)$ yields
\begin{align*}
&\varphi(x,y, r, v_2)-\varphi(x,y,r, v_1)     \\
&\leq c_{1\varphi} |y||v_1-v_2| +c_{2\varphi} |r| |v_1-v_2| + |\varphi(x,0, 0, v_1)-\varphi(x,0,0, v_2)|  \notag\\
&\leq c_{0\varphi} |v_1-v_2| + c_{1\varphi} |y||v_1-v_2| +c_{2\varphi} |r| |v_1-v_2|
\end{align*}
for all $y,r\in \R$, $v_1,v_2\in \R$ and a.e. $x\in \Gamma_C$. Next, from \hyperref[assumptiophi]{$(H2)$}$(ii)$-$(iii)$, we have
\begin{align*}
\phi (x,y,r,v_2)  - \phi (x,y,r, v_1)
&= \phi (x,y,r,v_2)  - \phi (x,y,r, v_1) + \phi (x,0,0, v_1) - \phi (x,0,0, v_2)  \\
&\leq c_{1\phi} |y|^2|r| |v_1-v_2| 
\end{align*}
for all $y,r\in \R$, $v_1,v_2\in \R$ and a.e. $x\in \Gamma_C$. Lastly, it follows by \hyperref[assumptionH]{$(H3)$}$(ii)$ that
\begin{align*}
| H (x,0,0) - H(x,y,r) | &\leq  c_{3\beta}|y||r|^2,
\end{align*}
which we combine with
\begin{align*}
| H(x,y,r)| &\leq |  H(x,y,r)  -   H (x,0,0)| +  |H(x,0,0)|
\end{align*}
and \hyperref[assumptionH]{$(H3)$}$(iii)$ to obtain the desired estimate.
\end{proof}
We will now state the main result, i.e., Theorem \ref{thm:mainresult}, which provides existence, uniqueness, and regularity results for \eqref{eq:eq1}.
\begin{thm}\label{thm:mainresult}
Assume that \hyperref[assumptionvarphi]{$(H1)$}-\hyperref[assumptionH]{$(H3)$} and \eqref{eq:assumptions_mu}-\eqref{eq:assumptionondata} hold. 
Then there exists a $T>0$ so that $(u,\beta) \in  L^\infty_T(H^1_0(\Omega) \cap L^\infty(\Omega)) \times C([0,T]; L^2(\Gamma_C)) \cap  L^\infty_T L^\infty(\Gamma_C)$ is the unique solution to \eqref{eq:eq1}.
\end{thm}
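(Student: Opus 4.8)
The plan is to construct the solution $(u,\beta)$ as the limit of a decoupling iteration, obtain uniform-in-$n$ a priori bounds (first in energy norms, then in $L^\infty$ via Moser iteration), upgrade these to strong convergence, pass to the limit, and finally prove uniqueness directly from the structural hypotheses. Concretely, start from $\beta^0 := \beta_0$ and, given $\beta^{n-1} \in C([0,T];L^2(\Gamma_C)) \cap L^\infty_T L^\infty(\Gamma_C)$, define $u^n \in L^\infty_T(H^1_0(\Omega))$ as the solution of the variational inequality \eqref{eq:eq1a} with $\beta$ frozen at $\beta^{n-1}$; this fits Problem \ref{prob:prelem} with $a(u,v) = \int_\Omega \mu \nabla u\cdot\nabla v\,\mathrm{d}x$ (bilinear, symmetric, coercive by \eqref{eq:assumptionmu}, bounded) and $\psi$ built from $\varphi(\beta^{n-1},\cdot,\cdot)$ and $\phi(\beta^{n-1},\cdot,\cdot)$ — here one uses the monotonicity-type bounds in \hyperref[assumptionvarphi]{$(H1)$}$(ii)$, \hyperref[assumptiophi]{$(H2)$}$(ii)$ together with the smallness condition \eqref{eq:smallness} to guarantee the required convexity/coercivity so that Theorem \ref{thm:pre_existence_uniqueness} applies. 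Then set $\beta^n(t) = \beta_0 + \int_0^t H(\beta^n(s), u^n(s))\,\mathrm{d}s$, solving this ODE on $\Gamma_C$ pointwise in $x$ via Picard–Lindelöf using the local-Lipschitz estimate from \hyperref[assumptionH]{$(H3)$}$(ii)$ and the growth bound $|H(x,y,r)| \le c_{0\beta} + c_{3\beta}|y||r|^2$ from Lemma \ref{lemma:calculation}; a Gronwall argument plus Proposition \ref{prop:traceinfty} (to control $\|u^n\|_{L^\infty(\Gamma_C)}$ by $\|u^n\|_{L^\infty(\Omega)}$) gives a bound on $\|\beta^n\|_{L^\infty_T L^\infty(\Gamma_C)}$ valid on a possibly small time interval $[0,T]$.

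The core of the argument is the chain of a priori estimates. First, testing \eqref{eq:eq1a} for $u^n$ with $v = 0$ and $v = 2u^n$ and using coercivity, the Lipschitz bounds from Lemma \ref{lemma:calculation}, the trace inequality Proposition \ref{prop:trace} (with $\epsilon$ chosen small relative to $\mu_\ast$), and the smallness \eqref{eq:smallness}, one gets an $H^1_0$-bound $\|u^n\|_{L^\infty_T H^1(\Omega)} \le c(\|f_0\|,\|f_N\|,\|\beta^{n-1}\|_{L^\infty_T L^\infty(\Gamma_C)})$. Next comes the delicate part: a finely-tuned Moser iteration to bootstrap to $L^\infty$ in space. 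One tests \eqref{eq:eq1a} with $v = u^n - |u^n_+|^{2(k-1)}u^n_+$-type perturbations (and the analogous negative-part version), staying inside $H^1_0(\Omega)$ by Proposition \ref{prop:nabla}; the $\phi$- and $\varphi$-terms are controlled on $\Gamma_C$ using Proposition \ref{prop:traceinfty}, the Sobolev embedding Proposition \ref{prop:sobolev_embedding} converts the gradient energy into higher $L^{p^\ast}$ norms, and iterating over exponents $r_\ell \to \infty$ while tracking constants carefully yields $\|u^n\|_{L^\infty_T L^\infty(\Omega)} \le c$ via Lemma \ref{lemma:passingthelimit_bound}. The point of the "finely-tuned" scheme is that the multiplicative constants in the iteration must depend on $\|\beta^{n-1}\|$ only through a form that, once combined with the $\beta^n$-ODE bound, closes; this is exactly where I expect the main difficulty, and where the smallness assumption \eqref{eq:smallness} is essential.

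Having the bounds, one shows they are uniform in $n$: write the difference inequality satisfied by $w^n := u^n - u^{n-1}$ (subtract the two variational inequalities, test cross-wise with $v = u^{n-1}$ in one and $v = u^n$ in the other, so the $\psi$-terms combine via \hyperref[assumptionvarphi]{$(H1)$}$(ii)$ and \hyperref[assumptiophi]{$(H2)$}$(ii)$) and for $\delta^n := \beta^n - \beta^{n-1}$ (from the ODE, using \hyperref[assumptionH]{$(H3)$}$(ii)$ and Gronwall), obtaining $\|u^n\|_{L^\infty_T X} \le c\|\text{data}\| + L\|u^{n-1}\|_{L^\infty_T X}$ with $L<1$ — provided $T$ is small enough and \eqref{eq:smallness} holds — so Lemma \ref{lemma:estn} gives $n$-independent bounds, and the same contraction shows $(u^n,\beta^n)$ is Cauchy, hence converges strongly in $L^\infty_T H^1_0(\Omega) \times C([0,T];L^2(\Gamma_C))$; the uniform $L^\infty$ bounds are inherited by the limit via weak-$\ast$ lower semicontinuity. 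To pass to the limit in \eqref{eq:eq1a}, use Lemma \ref{lemma:strongtoae} to pass to a.e.-$t$ subsequences, lower semicontinuity for the left side and strong convergence (plus the Lipschitz/trace bounds) for the $\varphi,\phi$ terms; for \eqref{eq:eq1b} pass to the limit in the integral using the $H^3$-Lipschitz bound and dominated convergence. Uniqueness follows by the identical difference-inequality computation applied to two solutions, yielding $\|u^1 - u^2\|_{L^\infty_T H^1} \le L'\|u^1 - u^2\|_{L^\infty_T H^1}$ with $L' < 1$, hence $u^1 = u^2$ and then $\beta^1 = \beta^2$ from the ODE; the regularity $\beta \in C([0,T];L^2(\Gamma_C))$ is read off from \eqref{eq:eq1b} and the bound on $H$.
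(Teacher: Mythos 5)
There is a genuine gap in your iteration scheme. You propose to freeze only $\beta^{n-1}$ in \eqref{eq:eq1a} and solve for $u^n$; but after that freezing the boundary terms $\varphi(\beta^{n-1},u^n,\cdot)$ and $\phi(\beta^{n-1},u^n,\cdot)$ still depend on the unknown $u^n$ through their second slot, so the ``frozen'' problem is a quasi-variational inequality. The functional $\psi$ you would build is not a fixed convex functional of $v$, and Theorem~\ref{thm:pre_existence_uniqueness} does not apply as you claim. The paper's Problem~\ref{prob:aux} freezes both the bonding field $\beta$ \emph{and} the second slot $\xi$, and the iteration then takes $\xi=u^{n-1}$, $\beta=\beta^{n-1}$, yielding at each step a genuine elliptic variational inequality of the form of Problem~\ref{prob:prelem}. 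This second freezing is essential, not just convenient: it is also what allows the $L^\infty$ bound to close. In Proposition~\ref{prop:u_bound} one obtains $\norm{u_{\xi\beta}}_{L^\infty_T L^\infty(\Omega)}\le c_0K$ with $K$ depending on $\norm{\xi}_{L^\infty_T L^\infty(\Omega)}$ and $\norm{\beta}_{L^\infty_T L^\infty(\Gamma_C)}$ only, i.e.\ on the \emph{previous} iterate's $L^\infty$ norms. If the second slot were $u^n$ itself, the Moser argument would produce a bound involving $\norm{u^n}_{L^\infty}$ --- the very quantity you are trying to estimate --- and would be circular.

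Secondly, the Moser scheme you sketch (test functions of the form $u^n-|u^n_+|^{2(k-1)}u^n_+$) is the standard power-function iteration, which typically returns $\norm{u}_{L^\infty}\le c\,(\text{data}+\norm{u}_{L^2})$. The paper instead uses the logarithmic substitution $w=\log\bigl((M+K)/(M+K-u_+)\bigr)$ adapted from \cite[Theorem~8.16]{Gilbarg2001}: Part~I of the proof of Proposition~\ref{prop:u_bound} gives $\norm{\nabla w}_{L^2}\le c_0$ with a constant independent of $K$; the Moser iteration is then run on $w$ itself; and unwinding the logarithm yields $\norm{u_+}_{L^\infty}\le c_0K$ with \emph{no} lower-order norm of $u_{\xi\beta}$ on the right. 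Remark~\ref{remark:moser} stresses exactly this point: replacing $K$ by $\norm{u_{\xi\beta}}_{L^2}$ (as in \cite[Lemma~3.2]{Drabek1997} or in your power scheme) is precisely what must be avoided in order to iterate the estimate in Step~5 under the smallness condition \eqref{eq:smallness}. Your scheme might be salvageable by bootstrapping it against the energy estimate, but you would have to trace the constants and verify that the resulting two-quantity iteration still contracts --- this is not automatic, and it is the reason the paper calls its scheme finely tuned.
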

\begin{remark}\label{remark:mainthm}
    In the proof, we combine techniques from \cite{Drabek1997,Gilbarg2001} (see also \cite{Drabek2022}) to obtain the boundedness of the solution. For this, we need to adjust the Moser iteration scheme to our system of equations (see Remark \ref{remark:moser} for more details).
\end{remark}
\begin{remark}
If $\phi \equiv 0$, i.e., $c_{1\phi}=c_{2\phi}=c_{3\phi}=0$ in \hyperref[assumptiophi]{$(H2)$}, the smallness assumption \eqref{eq:smallness} is also found in, e.g., \cite{Sofonea2002,Sofonea2020,Migorski2010}. 
\end{remark}

\subsection{Strategy of the proof of Theorem \ref{thm:mainresult}}
The proof of the theorem is divided into six steps. In the first step, we introduce an auxiliary problem to \eqref{eq:eq1} (Problem \ref{prob:aux}).
In the auxiliary problem, we fix two of the functions in \eqref{eq:eq1a} and leave \eqref{eq:eq1b} intact. We use an existing result to prove that Problem \ref{prob:aux} has a unique solution (Step 2). We then find an estimate and boundedness for the solution to the auxiliary problem (Step 3-4). We then define an iterative scheme for Problem  \ref{prob:aux} using \eqref{eq:eq1}. This iterative scheme decouples \eqref{eq:eq1b} and Problem \ref{prob:aux} at each step. We then show a priori estimates and boundedness results for the sequential solution using the estimates found in Step 3-4. Next, we study the difference between two successive iterates and show that these iterates are Cauchy sequences. We then pass to the limit to show that the iterative scheme converges to \eqref{eq:eq1} (Step 5). In Step 6, we show that the solution to \eqref{eq:eq1} is unique.

\section{Proof of Theorem \ref{thm:mainresult}}\label{sec:proof_of_mainresult}
With the preparation in Section \ref{sec:prelim}-\ref{sec:mainresult}, we proceed to the proof of Theorem \ref{thm:mainresult}. For the convenience of the reader, we split the proof into several steps. 

\subsection*{Step 1 \textit{(Auxiliary problem to the elliptic variational inequality \eqref{eq:eq1a})}.}
Let $(\xi,\beta) \in L^\infty_T (H^1_0(\Omega) \cap L^\infty(\Omega)) \times C([0,T];L^2(\Gamma_C)) \cap  L^\infty_T  L^\infty(\Gamma_C)$ be given. We then define an auxiliary problem to \eqref{eq:eq1a}.
\begin{prob}\label{prob:aux}
Find $u_{\xi\beta}\in L^\infty_T H^1_0(\Omega)$  corresponding to $(\xi,\beta) \in L^\infty_T (H^1_0(\Omega) \cap L^\infty(\Omega)) \times C([0,T];L^2(\Gamma_C)) \cap  L^\infty_T  L^\infty(\Gamma_C)$ such that
\begin{align*}
\int_{\Omega} \mu \nabla u_{\xi\beta}(t)\cdot \nabla(v-u_{\xi\beta}(t)) \d x  &+ \int_{\Gamma_C} [\varphi(\beta(t),  \xi(t),  v) - \varphi(\beta(t),  \xi(t),  u_{\xi\beta}(t))] \d \sigma \\
&+ \int_{\Gamma_C} [\phi(\beta(t),  \xi(t),  v) - \phi(\beta(t),  \xi(t),  u_{\xi\beta}(t))] \d \sigma\\
&\geq \int_{\Omega} f_0(t)  (v-u_{\xi\beta}(t)) \d x+ \int_{\Gamma_N} f_N(t) (v-u_{\xi\beta}(t)) \d \sigma
\end{align*}
for all $v\in H^1_0(\Omega)$ and a.e. $t\in (0,T)$.
\end{prob}
\begin{remark}
In comparison to \eqref{eq:eq1}, we keep $\xi = u$ and $\beta$ (still denoted by $\beta$) known. The subscript on $u$ is to emphasize that the solution to Problem \ref{prob:aux} corresponds to  $(\xi,\beta) \in L^\infty_T (H^1_0(\Omega) \cap L^\infty(\Omega)) \times C([0,T];L^2(\Gamma_C)) \cap  L^\infty_T  L^\infty(\Gamma_C)$.
\end{remark}

\subsection*{Step 2 \textit{(Existence and uniqueness of a solution to the auxiliary problem (Problem \ref{prob:aux}))}.} 
Let $(\xi,\beta) \in L^\infty_T (H^1_0(\Omega) \cap L^\infty(\Omega)) \times C([0,T];L^2(\Gamma_C)) \cap  L^\infty_T  L^\infty(\Gamma_C)$ be given. We define the bilinear operator $a : H^1_0(\Omega) \times H^1_0(\Omega) \rightarrow \R$ by
\begin{align*}
a(u,v) =\int_\Omega \mu(x) \nabla u \cdot \nabla v \d x
\end{align*}
for all $u,v\in H^1_0(\Omega)$ and the functional $\psi_{\xi\beta} : H^1_0(\Omega) \rightarrow \R$ by
\begin{align*}
\psi_{\xi\beta}(v) = \int_{\Gamma_C} \varphi(\beta,  \xi,  v) \d \sigma +  \int_{\Gamma_C} \phi(\beta,  \xi,  v) \d \sigma
\end{align*}
for all $\xi,v \in H^1_0(\Omega)$ and $\beta \in L^2(\Gamma_C)$. Moreover, by the Riesz representation theorem, there exists a unique $f :[0,T] \rightarrow H^1_0(\Omega)$ so that
\begin{align*}
\scalarprod{f(t)}{v}_{H^1(\Omega)}  =  \int_{\Omega} f_0(t)  v \d x+ \int_{\Gamma_N} f_N(t) v \d \sigma
\end{align*}
for all $v\in H^1_0(\Omega)$ and a.e. $t\in (0,T)$. From \hyperref[assumptionvarphi]{$(H1)$}-\hyperref[assumptiophi]{$(H2)$}, Lemma \ref{lemma:calculation}, and \eqref{eq:assumptions_mu}-\eqref{eq:assumptionondata} it follows directly by Theorem \ref{thm:pre_existence_uniqueness} that Problem \ref{prob:aux} has a unique solution $u_{\xi\beta} \in L^\infty_T H^1_0(\Omega)$ corresponding to  $(\xi,\beta) \in L^\infty_T (H^1_0(\Omega) \cap L^\infty(\Omega)) \times C([0,T];L^2(\Gamma_C)) \cap  L^\infty_T  L^\infty(\Gamma_C)$. 

\subsection*{Step 3 \textit{(Estimate on the solution to the auxiliary problem (Problem \ref{prob:aux}))}.}
Indeed, we will now find an estimate on the solution to Problem \ref{prob:aux} that will come in handy later.
\begin{prop}\label{prop:estn}
Assume that \hyperref[assumptionvarphi]{$(H1)$}-\hyperref[assumptiophi]{$(H2)$} and \eqref{eq:assumptions_mu}-\eqref{eq:assumptionondata} hold. For given $(\xi,\beta) \in L^\infty_T (H^1_0(\Omega) \cap L^\infty(\Omega)) \times C([0,T];L^2(\Gamma_C)) \cap  L^\infty_T  L^\infty(\Gamma_C)$, let $u_{\xi\beta}$ be the solution to Problem \ref{prob:aux}. Then there exists a universal constant $c>0$ such that
\begin{align*}
\norm{u_{\xi\beta}}_{L^\infty_T H^1(\Omega)}  \leq& c(1 +   \norm{f_0}_{L^\infty_T L^\infty(\Omega)} + \norm{f_N}_{L^\infty_T L^\infty(\Gamma_N)}) +\frac{c_{1\varphi} c_0}{\mu_\ast} \norm{\beta}_{L^\infty_TL^\infty(\Gamma_C)}\\
&+ \frac{c_{2\varphi} c_0}{\mu_\ast} \norm{\xi}_{L^\infty_T H^1(\Omega)} +  \frac{c_{1\phi} c_0}{\mu_\ast}\norm{\beta}_{L^\infty_TL^\infty(\Gamma_C)}^2 \norm{\xi}_{L^\infty_T H^1(\Omega)}.
\end{align*}
\end{prop}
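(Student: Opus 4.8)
The plan is to test the variational inequality in Problem \ref{prob:aux} with the admissible choice $v = 0 \in H^1_0(\Omega)$ at a fixed (a.e.) time $t \in (0,T)$, which produces the inequality
\begin{align*}
\int_\Omega \mu \nabla u_{\xi\beta}(t) \cdot \nabla u_{\xi\beta}(t) \d x &\leq \int_{\Gamma_C} [\varphi(\beta(t),\xi(t),0) - \varphi(\beta(t),\xi(t),u_{\xi\beta}(t))] \d \sigma \\
&\quad + \int_{\Gamma_C} [\phi(\beta(t),\xi(t),0) - \phi(\beta(t),\xi(t),u_{\xi\beta}(t))] \d \sigma \\
&\quad + \int_\Omega f_0(t) u_{\xi\beta}(t) \d x + \int_{\Gamma_N} f_N(t) u_{\xi\beta}(t) \d \sigma.
\end{align*}
The left-hand side is bounded below by $\mu_\ast \norm{\nabla u_{\xi\beta}(t)}_{L^2(\Omega)}^2$ using \eqref{eq:assumptionmu}. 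For the right-hand side I would estimate each term separately: the $\varphi$-difference is controlled by Lemma \ref{lemma:calculation} (with $v_1 = u_{\xi\beta}(t)$, $v_2 = 0$) giving a bound by $(c_{0\varphi} + c_{1\varphi}|\beta(t)| + c_{2\varphi}|\xi(t)|)|u_{\xi\beta}(t)|$ pointwise on $\Gamma_C$; the $\phi$-difference similarly by $c_{1\phi}|\beta(t)|^2|\xi(t)||u_{\xi\beta}(t)|$; and the forcing terms in the obvious way.

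Next I would pass from the boundary $L^1(\Gamma_C)$ integrals to norms: Hölder's inequality turns $\int_{\Gamma_C}(\cdots)|u_{\xi\beta}(t)| \d\sigma$ into products of $L^2(\Gamma_C)$-norms, and then Proposition \ref{prop:trace} (or the simpler trace embedding $\norm{w}_{L^2(\Gamma_C)} \leq c_0\norm{w}_{H^1(\Omega)}$ it implies for $w \in H^1_0(\Omega)$, via the Poincaré inequality) converts $\norm{\xi(t)}_{L^2(\Gamma_C)}$ and $\norm{u_{\xi\beta}(t)}_{L^2(\Gamma_C)}$ into $c_0\norm{\xi(t)}_{H^1(\Omega)}$ and $c_0\norm{u_{\xi\beta}(t)}_{H^1(\Omega)}$; the factor $\norm{\beta(t)}_{L^2(\Gamma_C)}$ I would bound by $c\norm{\beta(t)}_{L^\infty(\Gamma_C)}$ using $|\Gamma_C| < \infty$, and likewise $\norm{\beta(t)}_{L^\infty(\Gamma_C)}^2$ for the $\phi$-term. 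For the volume and Neumann forcing terms, Hölder plus the Poincaré/trace inequalities give a bound by $c(\norm{f_0(t)}_{L^\infty(\Omega)} + \norm{f_N(t)}_{L^\infty(\Gamma_N)})\norm{u_{\xi\beta}(t)}_{H^1(\Omega)}$. Collecting everything, the inequality has the shape $\mu_\ast\norm{\nabla u_{\xi\beta}(t)}_{L^2(\Omega)}^2 \leq A(t)\norm{u_{\xi\beta}(t)}_{H^1(\Omega)}$ where $A(t)$ gathers all the data- and $(\xi,\beta)$-dependent coefficients.

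To finish I would use $\norm{u_{\xi\beta}(t)}_{H^1(\Omega)} \leq (1+c_0)\norm{\nabla u_{\xi\beta}(t)}_{L^2(\Omega)}$ (the Poincaré consequence recorded in Section \ref{sec:funcspace}) so that the left side dominates $\frac{\mu_\ast}{(1+c_0)^2}\norm{u_{\xi\beta}(t)}_{H^1(\Omega)}^2$; dividing by $\norm{u_{\xi\beta}(t)}_{H^1(\Omega)}$ (the estimate being trivial when this vanishes) yields $\norm{u_{\xi\beta}(t)}_{H^1(\Omega)} \leq \frac{(1+c_0)^2}{\mu_\ast}A(t)$. Taking the essential supremum over $t \in (0,T)$ and relabelling constants — absorbing $(1+c_0)^2$, $c_0$, and $|\Gamma_C|$ into the universal $c$ while keeping the $\frac{c_{1\varphi}c_0}{\mu_\ast}$, $\frac{c_{2\varphi}c_0}{\mu_\ast}$, $\frac{c_{1\phi}c_0}{\mu_\ast}$ coefficients explicit as in the statement — gives the claimed bound. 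I do not anticipate a serious obstacle here; the only point requiring a little care is bookkeeping the constants so that the coefficients of $\norm{\beta}_{L^\infty_T L^\infty(\Gamma_C)}$, $\norm{\xi}_{L^\infty_T H^1(\Omega)}$, and the product term come out exactly in the stated form, since these precise coefficients (together with the smallness assumption \eqref{eq:smallness}) are what later drive the contraction argument via Lemma \ref{lemma:estn}.
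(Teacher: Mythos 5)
Your proposal follows the paper's proof exactly: test with $v=0$, invoke Lemma \ref{lemma:calculation} with $v_1 = u_{\xi\beta}(t)$ and $v_2 = 0$, bound the resulting boundary and forcing integrals via Hölder, Proposition \ref{prop:trace}, and the Poincaré inequality, cancel one power of $\norm{u_{\xi\beta}(t)}_{H^1(\Omega)}$, and take the essential supremum in $t$. Your ordering (Hölder first, then trading $\norm{\beta(t)}_{L^2(\Gamma_C)}$ for $\norm{\beta(t)}_{L^\infty(\Gamma_C)}$) versus the paper's (pulling $\norm{\beta(t)}_{L^\infty(\Gamma_C)}$ out of the integral before Hölderizing) is an immaterial bookkeeping difference that produces the same bound.
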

\begin{remark}
The estimate in Proposition \ref{prop:estn} will be used to find estimates of the solutions of the iterative scheme. The right-hand side will therefore depend on $n\in\N$. But with this explicit dependency, we will be able to prove uniform estimates.
\end{remark}
\begin{proof}
For simplicity of notation, we denote $u=u_{\xi\beta}$.	Choosing $v=0$ in Problem \ref{prob:aux} gives us 
\begin{align*}
\int_\Omega \mu |\nabla u(t)|^2  \d x &\leq \int_{\Omega} f_0(t)   u(t) \d x + \int_{\Gamma_N} f_N(t)    u(t) \d \sigma\\
&+\int_{\Gamma_C} [-\varphi(\beta(t),  \xi(t), 0)+\varphi(\beta(t), \xi(t), u(t))] \d \sigma\\
&+\int_{\Gamma_C} [-\phi(\beta(t),  \xi(t), 0)+\phi(\beta(t), \xi(t), u(t))] \d \sigma
\end{align*}
for a.e. $t\in (0,T)$.
Utilizing Lemma \ref{lemma:calculation} with $v_1=u(t)$ and $v_2 = 0$, \eqref{eq:assumptionmu}, H\"{o}lder's inequality, and Proposition \ref{prop:trace} with $\epsilon = 1$ yields
\begin{align*}
\mu_\ast \int_\Omega |\nabla u(t)|^2 \d x &\leq 
\norm{f_0(t)}_{L^\infty(\Omega)} \int_\Omega |u(t)| \d x + \norm{f_N(t)}_{L^\infty(\Gamma_N)} \int_{\Gamma_N} | u(t)| \d \sigma \\
&+ (c_{0\varphi} + c_{1\varphi}\norm{\beta(t)}_{L^\infty(\Gamma_C)})\int_{\Gamma_C}   | u(t)| \d \sigma \\
&+(c_{2\varphi}+ c_{1\phi} \norm{\beta(t)}_{L^\infty(\Gamma_C)}^2  ) \int_{\Gamma_C} |\xi(t)|  | u(t)| \d \sigma  \\ \notag 
&\leq c_0 \norm{f_0(t)}_{L^\infty(\Omega)} \norm{u(t)}_{L^2(\Omega)} + c_0\norm{f_N(t)}_{L^\infty(\Gamma_N)}\norm{u(t)}_{H^1(\Omega)}\\ \notag
&+\Big(c_{0\varphi}c_0+ c_{1\varphi}c_0\norm{\beta(t)}_{L^\infty(\Gamma_C)}\Big) \norm{u(t)}_{H^1(\Omega)}\\
&+ c_0\Big(c_{2\varphi}+c_{1\phi}\norm{\beta(t)}_{L^\infty(\Gamma_C)}^2\Big)\norm{\xi(t)}_{H^1(\Omega)} \norm{u(t)}_{H^1(\Omega)}
\notag
\end{align*}
for a.e. $t\in (0,T)$. From the Poincar\'{e} inequality, we obtain
\begin{align*}
\mu_\ast\norm{u}_{L^\infty_T H^1(\Omega)}^2 &\leq c\Big(1 +  \norm{f_0}_{L^\infty_T  L^\infty(\Omega)} + \norm{f_N}_{L^\infty_T L^\infty(\Gamma_N)} \Big)\norm{u}_{L^\infty_T H^1(\Omega)} \\
&+c_{1\varphi}c_0\norm{\beta}_{L^\infty_TL^\infty(\Gamma_C)}\norm{u}_{L^\infty_T H^1(\Omega)}\\
&+ c_0\Big(c_{2\varphi}+c_{1\phi}\norm{\beta}_{L^\infty_TL^\infty(\Gamma_C)}^2\Big)\norm{\xi}_{L^\infty_T H^1(\Omega)} \norm{u}_{L^\infty_T H^1(\Omega)} ,
\end{align*}
which implies
\begin{align*}
\norm{u}_{L^\infty_T H^1(\Omega)}     &\leq c(1 +   \norm{f_0}_{L^\infty_T L^\infty(\Omega)} + \norm{f_N}_{L^\infty_T L^\infty(\Gamma_N)}) \\
&+\frac{c_{1\varphi} c_0}{\mu_\ast} \norm{\beta}_{L^\infty_TL^\infty(\Gamma_C)} + \frac{c_{2\varphi}c_0}{\mu_\ast} \norm{\xi}_{L^\infty_T H^1(\Omega)}  + \frac{c_{1\phi}c_0}{\mu_\ast} \norm{\beta}_{L^\infty_TL^\infty(\Gamma_C)}^2\norm{\xi}_{L^\infty_T H^1(\Omega)} 
\end{align*}
as desired.
\end{proof}

\subsection*{Step 4 \textit{(Boundedness of the solution to the auxiliary problem)}.}
Let $(\xi,\beta)\in L^\infty_T (H^1_0(\Omega) \cap L^\infty(\Omega)) \times C([0,T];L^2(\Gamma_C)) \cap  L^\infty_TL^\infty(\Gamma_C)$ be known. Indeed, we summarize the boundedness of $u_{\xi\beta}$ corresponding to  $(\xi,\beta) \in L^\infty_T (H^1_0(\Omega) \cap L^\infty(\Omega)) \times C([0,T];L^2(\Gamma_C))\cap L^\infty_TL^\infty(\Gamma_C)$ in the next proposition.
\begin{prop}\label{prop:u_bound}
Assume that \hyperref[assumptionvarphi]{$(H1)$}-\hyperref[assumptiophi]{$(H2)$} and \eqref{eq:assumptions_mu}-\eqref{eq:assumptionondata} hold. Let $(\xi,\beta) \in L^\infty_T (H^1_0(\Omega) \cap L^\infty(\Omega)) \times C([0,T];L^2(\Gamma_C)) \cap L^\infty_TL^\infty(\Gamma_C)$ be given and let $u_{\xi\beta} \in L^\infty_T H^1_0(\Omega)$ be the unique solution to Problem \ref{prob:aux}. Then, there is a constant $c_0>0$ such that
\begin{align}\label{eq:u_xibeta_Linfty}
\norm{u_{\xi\beta}}_{L^\infty_T L^\infty(\Omega)} &\leq c_0K,
\end{align}
where 
\begin{align*}
K=&\frac{1}{\mu_\ast}\Big(c_{0\varphi}+\norm{f_0}_{L^\infty_TL^\infty(\Omega)} +\norm{f_N}_{L^\infty_TL^\infty(\Gamma_N)}  \\
&+ c_{1\varphi}\norm{\beta}_{L^\infty_TL^\infty(\Gamma_C)} + c_{2\varphi} c_0\norm{\xi}_{L^\infty_TL^\infty(\Omega)} + c_{1\phi}c_0\norm{\beta}_{L^\infty_TL^\infty(\Gamma_C)}^2 \norm{\xi}_{L^\infty_TL^\infty(\Omega)} \Big).
\end{align*}
\end{prop}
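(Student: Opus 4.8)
The plan is to prove \eqref{eq:u_xibeta_Linfty} by a Moser iteration tailored to the variational inequality and its boundary terms, in the spirit of \cite{Drabek1997,Gilbarg2001,Winkert2014}. Fix $t\in(0,T)$, write $u:=u_{\xi\beta}(t)$, and suppress $t$ from the data. The first move is to turn Problem \ref{prob:aux} into a one-sided differential inequality: inserting $v=u-w$ for arbitrary $w\in H^1_0(\Omega)$ and estimating the $\varphi$- and $\phi$-increments by Lemma \ref{lemma:calculation} (with $v_1=u$ and $v_2=u-w$, so $|v_1-v_2|=|w|$) gives
\begin{align*}
\int_\Omega \mu\,\nabla u\cdot\nabla w\d x \;\leq\; \int_\Omega |f_0|\,|w|\d x + \int_{\Gamma_N}|f_N|\,|w|\d\sigma + \int_{\Gamma_C} g\,|w|\d\sigma \qquad \text{for all } w\in H^1_0(\Omega),
\end{align*}
with $g := c_{0\varphi}+c_{1\varphi}|\beta|+c_{2\varphi}|\xi|+c_{1\phi}|\beta|^2|\xi|\in L^\infty(\Gamma_C)$. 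Using Proposition \ref{prop:traceinfty} to pass from $\norm{\xi}_{L^\infty(\Omega)}$ to $\norm{\xi}_{L^\infty(\Gamma_C)}$ and \eqref{eq:assumptionondata}, one checks $\norm{g}_{L^\infty(\Gamma_C)}\leq\mu_\ast K$ and similarly $\norm{f_0}_{L^\infty(\Omega)},\norm{f_N}_{L^\infty(\Gamma_N)}\leq\mu_\ast K$; so everything is reduced to an $L^\infty(\Omega)$ bound for $u$ under a linear inequality with data of size $\mu_\ast K$.

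To keep the resulting bound \emph{linear} in the data I would iterate on the shifted function $\bar u := u^+ + k$, where $k := \mu_\ast^{-1}\big(\norm{f_0}_{L^\infty(\Omega)}+\norm{f_N}_{L^\infty(\Gamma_N)}+\norm{g}_{L^\infty(\Gamma_C)}\big)\leq 3K$, so that $\bar u\geq k$ and every data term becomes lower order (if $k=0$ then $\nabla u=0$, hence $u=0$, and there is nothing to prove). For $q\geq1$ and a cutoff $M>0$ set $\bar u_M := \min(u^+,M)+k$ and test the reduced inequality with $w:=\bar u_M^{2q-1}-k^{2q-1}$; this is admissible since it vanishes on $\Gamma_D$ and $\bar u_M\in H^1(\Omega)\cap L^\infty(\Omega)$ (Proposition \ref{prop:nabla}). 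With $\zeta:=\bar u_M^q$, the identity $\nabla u\cdot\nabla\bar u_M=\mathds{1}_{\{0<u<M\}}|\nabla u|^2\geq0$ bounds the left-hand side below by $\tfrac{\mu_\ast}{q}\norm{\nabla\zeta}_{L^2(\Omega)}^2$, while $\bar u_M^{2q-1}\leq k^{-1}\bar u_M^{2q}$ makes the factors $\norm{f_0}/k,\ \norm{f_N}/k,\ \norm{g}/k$ each at most $\mu_\ast$ on the right. Dividing by $\mu_\ast$, absorbing the $\Gamma_N$- and $\Gamma_C$-traces via Proposition \ref{prop:trace} with $\epsilon\sim q^{-1/2}$, and invoking Proposition \ref{prop:sobolev_embedding} yields $\norm{\bar u_M}_{L^{p^\ast q}(\Omega)}\leq(c_0q)^{1/q}\norm{\bar u_M}_{L^{2q}(\Omega)}$.

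Now I would iterate with $q_\ell$ defined by $q_0=p^\ast/2$ and $2q_{\ell+1}=p^\ast q_\ell$; since $p^\ast/2>1$ (taking $p^\ast$ a fixed number $>2$ when $d=2$), $\sum_\ell q_\ell^{-1}\log(c_0q_\ell)<\infty$, so $\prod_\ell(c_0q_\ell)^{1/q_\ell}$ converges to a constant of the type $c_0=c(d,|\Bar{\Omega}|)$. This gives $\norm{\bar u_M}_{L^{p^\ast q_\ell}(\Omega)}\leq c_0\norm{\bar u_M}_{L^{p^\ast}(\Omega)}$ uniformly in $\ell$ and $M$; letting $M\to\infty$ by monotone convergence and applying Lemma \ref{lemma:passingthelimit_bound} (in the quantitative form $\norm{v}_{L^\infty}\leq\liminf_\ell\norm{v}_{L^{r_\ell}}$) gives $\norm{\bar u}_{L^\infty(\Omega)}\leq c_0\norm{\bar u}_{L^{p^\ast}(\Omega)}$. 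The base norm is controlled by the case $q=1$ with test function $w=u^+$: arguing as in Step 3 but using Proposition \ref{prop:traceinfty} on $\Gamma_C$ yields $\norm{\nabla u^+}_{L^2(\Omega)}\leq c_0k$, hence $\norm{u^+}_{L^{p^\ast}(\Omega)}\leq c_0\norm{u^+}_{H^1(\Omega)}\leq c_0K$ by Poincaré and Proposition \ref{prop:sobolev_embedding}, so $\norm{\bar u}_{L^{p^\ast}(\Omega)}\leq c_0K+k|\Omega|^{1/p^\ast}\leq c_0K$ and $\norm{u^+}_{L^\infty(\Omega)}\leq\norm{\bar u}_{L^\infty(\Omega)}\leq c_0K$. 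Repeating the argument for $u^-$ gives $\norm{u}_{L^\infty(\Omega)}\leq c_0K$; since $k\leq 3K$ and all constants depend only on $d,|\Bar{\Omega}|$, taking the essential supremum over $t\in(0,T)$ yields \eqref{eq:u_xibeta_Linfty}.

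The main obstacle I anticipate is the interplay of the boundary integrals with the iteration: the trace parameter $\epsilon=\epsilon(q)$ must be chosen so that simultaneously the $\Gamma_N$- and $\Gamma_C$-terms are absorbed into the Dirichlet energy, the constants picked up stay polynomial in $q$ (so the infinite product converges), and the dependence on the data remains linear — the last point being precisely what forces the shift by $k$ and makes the bookkeeping delicate. Moreover, the cutoff $M$ is needed because $u\in L^\infty(\Omega)$ is not known a priori, and the passage $M\to\infty$ must be carried along at every step before the iteration can be closed.
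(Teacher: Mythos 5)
Your proposal is correct, and it reaches the same linear-in-$K$ bound as the paper, but by a genuinely different route. The paper follows the \cite[Theorem 8.16]{Gilbarg2001}/\cite[Lemma 3.2]{Drabek1997} strategy: it first passes to the logarithmic quantity $w = \log\bigl((M+K)/(M+K-u_+)\bigr)$ with $M := \|u_+\|_{L^\infty(\bar\Omega)}$, proves $\|w\|_{H^1}\leq c_0$ (Part I) and $\|w\|_{L^\infty}\leq c_0$ (Part III, by Moser-iterating on $w$ using test functions of the form $u - w_h^{2\kappa}w/(M+K-u_+)$), and then exponentiates to extract $M\leq c_0 K$. You instead run a direct De Giorgi--Moser iteration on the truncated and shifted function $\bar u_M = \min(u^+,M)+k$ with $k\sim K$, using test functions $\bar u_M^{2q-1}-k^{2q-1}$; linearity in $K$ is obtained because the shift normalizes the data terms (each $\|f_0\|/k$, $\|f_N\|/k$, $\|g\|/k$ is absorbed into $\mu_\ast$), so the iteration constants are $K$-independent, and the base $L^{p^\ast}$-estimate already scales like $K$. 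Your version avoids the log-transform bookkeeping entirely and, because the cutoff $M$ is carried through the iteration, never needs $\|u_+\|_{L^\infty(\bar\Omega)}$ to be finite a priori (the paper's definition of $M(t)$ tacitly relies on this). The paper's route has the advantage of conforming exactly to the framework of the cited references and of isolating the $K$-dependence in one clean exponentiation step at the end. Two small technical remarks on your write-up: the Sobolev embedding and trace inequality you need are for $\zeta=\bar u_M^q\in H^1(\Omega)$, not $H^1_0(\Omega)$ (since $\zeta\equiv k^q>0$ on $\Gamma_D$), so you should invoke the $H^1(\Omega)\hookrightarrow L^{p^\ast}(\Omega)$ version rather than Proposition \ref{prop:sobolev_embedding} as stated; and the coefficient picked up from Proposition \ref{prop:trace} with $\epsilon\sim q^{-1/2}$ is in fact of order $q$, giving $\|\nabla\zeta\|_{L^2}\lesssim q\,\|\zeta\|_{L^2}$ and hence $(c_0 q)^{1/q}$ in the iteration — this is polynomial in $q$ as you note, so the infinite product still converges, but the exponent is slightly different from what one might read off your sketch.
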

\begin{remark}
The estimate in Proposition \ref{prop:u_bound} will be used in the iterative scheme, where $K$ will depend on $n$. But, we know the explicit dependence which will make us able to prove that the estimates are uniform with respect to $n$. 
\end{remark}
\begin{remark}\label{remark:moser}
The approach to the proof of Proposition \ref{prop:u_bound} is based on the work  of \cite[Theorem 8.16]{Gilbarg2001} and \cite[Lemma 3.2]{Drabek1997}  (see also \cite{Drabek2022}). Due to the coupling of the system, the proof is different compared to the aforementioned results. In particular, one of the technical difficulties arises when obtaining \eqref{eq:u_xibeta_Linfty}, where the constant $K$ depends on $\norm{\xi(t)}_{L^\infty(\Omega)}$. In contrary to \cite{Drabek2022} (see also \cite[Lemma 3.2]{Drabek1997}), where it suffices to have $\norm{u_{\xi\beta}(t)}_{L^2(\Omega)}$ in the upper bound. This is a key ingredient to close our estimates (see Step 5). 
\end{remark}
\begin{remark}
    The scheme of our proof can be adapted to (but not limited to) other coupled systems of equations.
\end{remark}
\begin{proof}
To simplify the notation, we denote $u=u_{\xi\beta}$. We may write $u = u_+ - u_-$ with $u_+ = \max\{u,0\}$ and $u_- = \max\{-u,0\}$. As a consequence, if we prove that $\norm{u_+}_{L^\infty_TL^\infty(\Bar{\Omega})} \leq c_0K$ and  $\norm{u_-}_{L^\infty_TL^\infty(\Bar{\Omega})} \leq c_0K$, then 
\begin{align*}
\norm{u}_{L^\infty_TL^\infty(\Bar{\Omega})} \leq \norm{u_+ - u_-}_{L^\infty_TL^\infty(\Bar{\Omega})} \leq \norm{u_+}_{L^\infty_TL^\infty(\Bar{\Omega})}+ \norm{u_-}_{L^\infty_TL^\infty(\Bar{\Omega})} \leq 2 c_0K.
\end{align*}
\subsection*{Case 1: Estimating $\norm{u_+}_{L^\infty_TL^\infty(\Bar{\Omega})} \leq c_0 K$} For convenience, we divided the proof into four main parts. First, we define
\begin{align}\label{eq:def_w}
w(t) := \log \Big( \frac{M(t)+K}{M(t)+K-u_+(t)}\Big) = \log(M(t)+K) - \log(M(t)+K-u_+(t))
\end{align}
where $M(t):= \norm{u_+(t)}_{L^\infty(\Bar{\Omega})}$ for a.e. $t\in(0,T)$.  We will use $w$ in \eqref{eq:def_w} to obtain the desired estimate. The proof will be as follows; first, we show that $\norm{w(t)}_{H^1(\Omega)} \leq c_0$. Secondly, we verify that $w(t)$ is nonnegative. Thirdly, we will obtain the estimate $\norm{w(t)}_{L^\infty(\Bar{\Omega})} \leq c_0$, and lastly, we conclude by using the three previous steps.
\subsection*{Part I: $\norm{w(t)}_ {H^1(\Omega)} \leq c_0$} 
It is important in this step that the constant is independent of $K$. It will be made clear in Part IV, but in short, we will use the exponential in \eqref{eq:def_w} to obtain the desired estimate. To show the estimate $\norm{w(t)}_{H^1(\Omega)} \leq c_0$, we will use Problem \ref{prob:aux}. First, we have to find a convenient choice of $v \in H^1_0(\Omega)$. That is, choosing a test function $v$ such that we can rewrite Problem \ref{prob:aux} in terms of $w$. From Proposition \ref{prop:nabla}, we have
\begin{align*}
	\nabla w(t) = \frac{\nabla u_+(t)}{M(t)+K-u_+(t)}.
\end{align*}
Therefore, a suitable choice for the test function $v$ is $v = u(t)- \frac{1}{M(t)+K-u_+(t)} \in H^1_0(\Omega)$ in Problem \ref{prob:aux}.
Indeed, since $u_+(t) \in H^1_0(\Omega)$ (by Proposition \ref{prop:nabla}) and $K \leq (M(t)-u_+(t))+K$ (from $u_+(t) \leq M(t)$), we have
\begin{align*}
    \nabla v = \nabla u(t) - \frac{\nabla u_+(t)}{(M(t)+K-u_+(t))^2}
\end{align*}
and
\begin{align*}
    \frac{1}{M(t)+K-u_+(t)} \leq \frac{1}{K}.
\end{align*}
This yields $v\in H^1_0(\Omega)$. Then choosing $v =u(t)- \frac{1}{M(t)+K-u_+(t)}$, Problem \ref{prob:aux} becomes
\begin{align*}
\int_\Omega \mu &\frac{\nabla u_+(t) \cdot \nabla u_+(t)}{(M(t)+K-u_+(t))^2} \d x \\
&\leq \int_\Omega \frac{|f_0(t)|}{M(t)+K-u_+(t)} \d x  + \int_{\Gamma_N} \frac{|f_N(t)|}{M(t)+K-u_+(t)} \d \sigma \\
&+  \int_{\Gamma_C}  [\varphi(\beta(t),u(t),u(t)-\frac{1}{M(t)+K-u_+(t)}) -\varphi(\beta(t),u(t),u(t)) ]  \d \sigma\\
&+  \int_{\Gamma_C}  [\phi(\beta(t),u(t),u(t)-\frac{1}{M(t)+K-u_+(t)}) -\phi(\beta(t),u(t),u(t)) ]  \d \sigma
\end{align*}
for a.e. $t\in (0,T)$. From Lemma \ref{lemma:calculation} and the lower bound of $\mu$ \eqref{eq:assumptionmu}, we deduce
\begin{align*}
\mu_\ast \int_\Omega \frac{|\nabla u_+(t)|^2}{(M(t)+K-u_+(t))^2} \d x &\leq \int_\Omega \frac{|f_0(t)|}{M(t)+K-u_+(t)}  \d x + \int_{\Gamma_N} \frac{|f_N(t)|}{M(t)+K-u_+(t)} \d \sigma \\
&+  \int_{\Gamma_C}  \frac{c_{0\varphi} + c_{1\varphi}|\beta(t)| + c_{2\varphi}|\xi(t)|}{M(t)+K-u_+(t)}   \d \sigma +  \int_{\Gamma_C}  \frac{c_{1\phi} |\beta(t)|^2|\xi(t)|}{M(t)+K-u_+(t)}  \d \sigma
\end{align*}
for a.e. $t\in(0,T)$. With $\frac{1}{M(t)+K-u_+(t)}\leq \frac{1}{K}$, we insert the definition of $w$ \eqref{eq:def_w} to obtain
\begin{align*}
\int_\Omega |\nabla w(t)|^2 \d x &\leq \frac{1}{\mu_\ast K}\int_\Omega |f_0(t)|  \d x + \frac{1}{\mu_\ast K}\int_{\Gamma_N} |f_N(t)| \d \sigma \\
&+  \frac{1}{\mu_\ast K}\int_{\Gamma_C} (c_{0\varphi} + c_{1\varphi}|\beta(t)| + c_{2\varphi}|\xi(t)|)  \d \sigma\\
&+  \frac{1}{\mu_\ast K}\int_{\Gamma_C}  c_{1\phi}|\beta(t)|^2|\xi(t)|  \d \sigma
\end{align*}
for a.e. $t\in (0,T)$. Observe, by H\"{o}lder's inequality that
\begin{align*}
\int_\Omega  |\nabla w(t)|^2 \d x  &\leq \frac{c_0}{\mu_\ast K} \Big( c_{0\varphi}+ \norm{f_0(t)}_{L^\infty(\Omega)} +\norm{f_N(t)}_{L^\infty(\Gamma_N)} + c_{1\varphi}\norm{\beta(t)}_{L^\infty(\Gamma_C)} \\
&+ c_{2\varphi} c_0\norm{\xi(t)}_{L^\infty(\Omega)} + c_{1\phi}c_0\norm{\beta(t)}_{L^\infty(\Gamma_C)}^2\norm{\xi(t)}_{L^\infty(\Omega)} \Big)
\end{align*}
for a.e. $t \in (0,T)$.	By the definition of $K$, we get 
\begin{align*}
\int_\Omega  |\nabla w(t)|^2 \d x  \leq c_0
\end{align*}
for a.e. $t\in (0,T)$. With an application of the Poincar\'{e} inequality,  we obtain the desired estimate.
\subsection*{Part II: $w(t)$ is nonnegative} Indeed, since  $u_+(t) \geq 0$, we have
\begin{align*}
    M(t)+K \geq M(t)+K-u_+(t),
\end{align*}
which implies
\begin{align*}
    \frac{M(t)+K}{M(t)+K-u_+(t)} \geq 1.
\end{align*}
Consequently, by the definition of $w$ \eqref{eq:def_w}, it follows that  $w(t)\geq 0$ for a.e. $t\in (0,T)$.

\subsection*{Part III: $\norm{w(t)}_{L^\infty(\Bar{\Omega})} \leq c_0$} 
We can divide this part into two steps since
\begin{align*}
    \norm{w(t)}_{L^\infty(\Bar{\Omega})}  &\leq \norm{w(t)}_{L^\infty(\Omega)}  + \norm{w(t)}_{L^\infty(\partial\Omega)}. 
\intertext{In the first step, we will show the estimate}
	 \norm{w(t)}_{L^\infty(\Omega)} &\leq c_0,
\intertext{and as a consequence of the first step, we obtain}
	\norm{w(t)}_{L^\infty(\partial \Omega)} &\leq \norm{w(t)}_{L^\infty(\Gamma_N)}  + \norm{w(t)}_{L^\infty(\Gamma_C)}\leq c_0
\end{align*}
in the second step for a.e. $t\in(0,T)$.
\subsection*{Part III.1: $\norm{w(t)}_{L^\infty(\Omega)} \leq c_0$} 
The proof of this part relies on the Moser iteration. The goal is to obtain an estimate where the constant is independent of $K$. To this extent, we first go back to Problem \ref{prob:aux} to choose another test function $v\in H^1_0(\Omega)$. The choice of $v$ has to be done in a way so that we may use the definition of $w$ \eqref{eq:def_w} to rewrite Problem \ref{prob:aux} in terms of $w$ instead of $u$. At the same time, we need the equation to be independent of the functions in $K$.  
\\
\indent
For the sake of simplicity, the dependence on $t$ in $w$ will be omitted for the rest of this part. We define $w_h = \min\{w, h\}$ for any $h >0$ and choose $v=u(t)- \frac{w_h^{2\kappa} w}{M(t)+K-u_+(t)} \in H^1_0(\Omega)$ for any $\kappa>0$, where it follows by Proposition \ref{prop:nabla} that
\begin{align*}
\nabla v = \nabla u(t) -   \frac{\nabla (w_h^{2\kappa} w)}{M(t)+K-u_+(t)}  - \frac{w_h^{2\kappa} w\nabla u_+(t)}{(M(t)+K-u_+(t))^2}.
\end{align*}
We note here that $w_h \geq 0$ since $w\geq 0$ from Part II. Using this test function in Problem \ref{prob:aux} yields
\begin{align*}
\int_\Omega &\mu \frac{ \nabla u(t) \cdot \nabla (w_h^{2\kappa} w) }{M(t)+K-u_+(t)} \d x + \int_\Omega \mu   \frac{w_h^{2\kappa} w |\nabla u_+(t)|^2 }{(M(t)+K-u_+(t))^2} \d x \\
&\leq \int_\Omega \frac{|f_0(t)|  w_h^{2\kappa} w}{M(t)+K-u_+(t)} \d x + \int_{\Gamma_N} \frac{|f_N(t)|w_h^{2\kappa} w}{M(t)+K-u_+(t)}  \d \sigma \\
&+  \int_{\Gamma_C} [\varphi(\beta(t),u(t),u(t)- \frac{w_h^{2\kappa} w}{M(t)+K-u_+(t)})-\varphi(\beta(t),u(t),u(t))]  \d \sigma\\
&+  \int_{\Gamma_C} [\phi(\beta(t),u(t),u(t)- \frac{w_h^{2\kappa} w}{M(t)+K-u_+(t)})-\phi(\beta(t),u(t),u(t))]  \d \sigma
\end{align*}
for a.e. $t\in(0,T)$. Utilizing Lemma \ref{lemma:calculation} reads
\begin{align*}
\int_\Omega \mu \frac{ \nabla u(t) \cdot \nabla (w_h^{2\kappa} w) }{M(t)+K-u_+(t)} \d x + \int_\Omega &\mu   \frac{w_h^{2\kappa} w |\nabla u_+(t)|^2 }{(M(t)+K-u_+(t))^2} \d  x \\
&\leq \int_\Omega \frac{|f_0(t)|w_h^{2\kappa} w}{M(t)+K-u_+(t)}  \d x \\
&+ \int_{\Gamma_N} \frac{|f_N(t)| w_h^{2\kappa} w}{M(t)+K-u_+(t)}  \d \sigma \\
&+  \int_{\Gamma_C}  \frac{c_{0\varphi} + c_{1\varphi}|\beta(t)| + c_{2\varphi}|\xi(t)|}{M(t)+K-u_+(t)} w_h^{2\kappa} w   \d \sigma \\
&+  \int_{\Gamma_C}  \frac{c_{1\phi}|\beta(t)|^2 |\xi(t)| }{M(t)+K-u_+(t)} w_h^{2\kappa} w   \d \sigma
\end{align*}
for a.e. $t\in (0,T)$. In Part II, we showed that $w\geq 0$. Then by \eqref{eq:assumptionmu} ($\mu(x) >\mu_\ast>0$ for a.e. $x\in \Omega$) and  $\frac{1}{M(t)+K-u_+(t)}\leq \frac{1}{K}$,  we have
\begin{align*}
\int_\Omega \mu \frac{ \nabla u(t) \cdot \nabla (w_h^{2\kappa} w) }{M(t)+K-u_+(t)} \d x + \mu_\ast &\int_\Omega   \frac{w_h^{2\kappa} w |\nabla u_+(t)|^2 }{(M(t)+K-u_+(t))^2} \d  x \\
&\leq \int_\Omega \frac{|f_0(t)| w_h^{2\kappa} w }{K}  \d x + \int_{\Gamma_N} \frac{|f_N(t)|w_h^{2\kappa} w}{K} \d \sigma \\
&+  \int_{\Gamma_C}  \frac{c_{0\varphi} + c_{1\varphi}|\beta(t)| + c_{2\varphi}|\xi(t)|}{K} w_h^{2\kappa} w   \d \sigma\\
&+  \int_{\Gamma_C}  \frac{c_{1\phi}|\beta(t)|^2 |\xi(t)|}{K}   w_h^{2\kappa} w \d \sigma
\end{align*}
for a.e. $t\in (0,T)$. Since $\mu_\ast \int_\Omega   \frac{w_h^{2\kappa} w |\nabla u(t)|^2 }{(M(t)+K-u(t))^2} \d x \geq 0$ by $w,w_h\geq0$, we may write 
\begin{align*}
\int_\Omega \mu \frac{ \nabla u(t) \cdot \nabla (w_h^{2\kappa} w) }{M(t)+K-u_+(t)} \d x  &\leq \int_\Omega \frac{|f_0(t)|w_h^{2\kappa} w }{K}  \d x + \int_{\Gamma_N} \frac{|f_N(t)|w_h^{2\kappa} w}{K} \d \sigma \\
&+  \int_{\Gamma_C}  \frac{c_{0\varphi} + c_{1\varphi}|\beta(t)| +c_{2\varphi}|\xi(t)|+c_{1\phi}|\beta(t)|^2 |\xi(t)|}{K} w_h^{2\kappa} w   \d \sigma\notag
\end{align*}
for a.e. $t\in(0,T)$.  From the definition of $K$, we have that
\begin{align*}
\int_\Omega \mu \frac{ \nabla u(t) \cdot \nabla (w_h^{2\kappa} w) }{M(t)+K-u_+(t)} \d x  &\leq \mu_\ast \int_\Omega w_h^{2\kappa} w \d x + \mu_\ast \int_{\Gamma_N} w_h^{2\kappa} w \d \sigma +  \mu_\ast \int_{\Gamma_C}  w_h^{2\kappa} w  \d \sigma
\end{align*}
for a.e. $t\in (0,T)$.
We define the above inequality as
\begin{align*}
	A_1 \leq A_2 + A_3 + A_4 
\end{align*}
to estimate the terms separately and then combine them before using a bootstrap argument to obtain the desired bound. We start by looking at $A_1$. From
\begin{align*}
	\nabla (w_h^{2\kappa} w) &= w \nabla (w_h^{2\kappa}) + w_h^{2\kappa} \nabla w =  2\kappa w_h^{2\kappa-1} w\nabla w_h + w_h^{2\kappa} \nabla w,
\end{align*}
we have
\begin{align*}
A_1 &=  2\kappa \int_\Omega \mu  w_h^{2\kappa-1}w  \frac{ \nabla u(t) \cdot  \nabla w_h }{M(t)+K-u_+(t)}    \d x + \int_\Omega \mu w_h^{2\kappa} \frac{ \nabla u(t) \cdot  \nabla w }{M(t)+K-u_+(t)}   \d x  \\
&=: A_1^1 + A_1^2
\end{align*}
for a.e. $t\in (0,T)$. To rewrite $A_1^1$ in terms of $w$ and $w_h$, we first observe that $w_h$ can be written as
\begin{align*}
w_h  &= w\mathds{1}_{\{w< h\}} + h \mathds{1}_{\{w\geq h\}} = (w-h)\mathds{1}_{\{w-h< 0\}} + h = -(w-h)_- + h.
\end{align*}
Then according to Proposition \ref{prop:nabla}
\begin{align}\label{eq:grad_w_h}
\nabla w_h = -\nabla (w-h)_- = \mathds{1}_{\{ w<h\}} \nabla w.
\end{align}
As a consequence of the definition of $w$ \eqref{eq:def_w}, \eqref{eq:grad_w_h}. and Proposition \ref{prop:nabla}, we deduce
\begin{align*}
\frac{ \nabla u(t) \cdot  \nabla w}{M(t)+K-u_+(t)}  \mathds{1}_{\{ w<h\}} &= \frac{ \nabla u(t) \cdot  \nabla u_+(t)}{(M(t)+K-u_+(t))^2}  \mathds{1}_{\{ w<h\}}\\
&=\frac{ |\nabla u_+(t)|^2}{(M(t)+K-u_+(t))^2}  \mathds{1}_{\{ w<h\}} \\
&= |\nabla w|^2 \mathds{1}_{\{ w<h\}}\\
&= |\nabla w_h|^2
\end{align*}
for a.e. $t\in (0,T)$. Gathering these calculations, $A_1^1$ becomes
\begin{align*}
A_1^1 &= 2\kappa \int_\Omega \mu  w_h^{2\kappa-1}w  |\nabla w_h|^2     \d x.
\end{align*}
The term $A_1^2$ is treated the same way. Hence, utilizing the lower bound of $\mu$ \eqref{eq:assumptionmu}, \eqref{eq:grad_w_h}, and $w\geq w_h$, the term $A_1$ reads 
\begin{align*}
A_1 &\geq 2\kappa\mu_\ast \int_\Omega  w_h^{2\kappa} |\nabla w_h|^2 \d x  + \mu_\ast  \int_\Omega  w_h^{2\kappa}  |\nabla w|^2   \d x\\
&\geq \mu_\ast(2\kappa+1) \int_\Omega  w_h^{2\kappa}  |\nabla w_h|^2 \d x \\
&=\mu_\ast\frac{2\kappa+1}{(\kappa+1)^2} \int_\Omega |\nabla (w_h^{\kappa+1})|^2 \d x.
\end{align*}
Next, we take a closer look at $A_2$. Applying Young's inequality with $\frac{1}{2(\kappa+1)} + \frac{2\kappa+1}{2(\kappa+1)} = 1$ yields
\begin{align*}
A_2 &\leq \mu_\ast \int_\Omega w^{2\kappa+1}\d x \leq \frac{\mu_\ast}{2\epsilon_1(\kappa+1)}|\Omega|+ \mu_\ast \epsilon_1\frac{2\kappa+1}{2(\kappa+1)} \int_\Omega w^{2(\kappa+1)} \d x ,
\end{align*}
where $\epsilon_1 >0$ is chosen later. For $A_3$, we apply  Young's inequality  with $\frac{1}{2(\kappa+1)} + \frac{2\kappa+1}{2(\kappa+1)} = 1$ and Proposition \ref{prop:trace} to obtain
\begin{align*}
A_3 &\leq \frac{\mu_\ast}{2\epsilon_2(\kappa+1)}|\Gamma_N|+ \mu_\ast \epsilon_2\frac{2\kappa+1}{2(\kappa+1)} \int_{\Gamma_N} w^{2(\kappa+1)} \d x \\
&\leq \frac{\mu_\ast}{2\epsilon_2(\kappa+1)}|\Gamma_N|+ c_0 \mu_\ast \epsilon_2 \epsilon_3\frac{2\kappa+1}{2(\kappa+1)} \int_\Omega |\nabla (w^{k+1})|^2 \d x + \frac{\mu_\ast \epsilon_2 c_0}{\epsilon_3} \frac{2\kappa+1}{2(\kappa+1)}  \int_\Omega w^{2(\kappa+1)} \d x,
\end{align*}
where we will choose $\epsilon_2, \epsilon_3>0$ later.
We follow the same procedure for $A_4$ as for $A_3$. This gives us 
\begin{align*}
A_4 &\leq \frac{\mu_\ast}{2\epsilon_4(\kappa+1)} |\Gamma_C| + \frac{\mu_\ast\epsilon_4(2\kappa+1)}{2(\kappa+1)}  \Big( c_0\epsilon_5
\norm{\nabla(w^{\kappa+1})}_{L^2(\Omega)}^2   +  \frac{c_0}{\epsilon_5}
\norm{w^{\kappa+1}}_{L^2(\Omega)}^2\Big)
\end{align*}
for $\epsilon_4,\epsilon_5>0$ chosen later. We gather the calculations for $A_1$, $A_2$, $A_3$, and $A_4$ to obtain 
\begin{align*}
\mu_\ast\frac{2\kappa+1}{(\kappa+1)^2} &\int_\Omega |\nabla (w_h^{\kappa+1})|^2 \d x \\
&\leq \frac{\mu_\ast}{2\epsilon_1(\kappa+1)} |\Omega| + \frac{\mu_\ast\epsilon_1(2\kappa+1)}{2(\kappa+1)}  \norm{w^{\kappa+1}}_{L^2(\Omega)}^2 \\
&+\frac{\mu_\ast}{2\epsilon_2(\kappa+1)} |\Gamma_N| + \frac{\mu_\ast\epsilon_2(2\kappa+1)}{2(\kappa+1)}  \Big( c_0\epsilon_3
\norm{\nabla(w^{\kappa+1})}_{L^2(\Omega)}^2   +  \frac{c_0}{\epsilon_3}
\norm{w^{\kappa+1}}_{L^2(\Omega)}^2\Big)\\
&+ \frac{\mu_\ast}{2\epsilon_4(\kappa+1)} |\Gamma_C| + \frac{\mu_\ast\epsilon_4(2\kappa+1)}{2(\kappa+1)}  \Big( c_0\epsilon_5
\norm{\nabla(w^{\kappa+1})}_{L^2(\Omega)}^2   +   \frac{c_0}{\epsilon_5}
\norm{w^{\kappa+1}}_{L^2(\Omega)}^2\Big).
\end{align*}
Choosing $\epsilon_1 = \epsilon_2 = \epsilon_4 = \frac{1}{(\kappa+1)}$ and $\epsilon_3 =
\epsilon_5 =   \frac{2}{3c_0}$	implies
\begin{align*}
&\norm{\nabla(w_h^{\kappa+1})}_{L^2(\Omega)}^2   \leq \frac{2}{3}\norm{\nabla(w^{\kappa+1})}_{L^2(\Omega)}^2  +  c_0\frac{(\kappa+1)^2}{2\kappa+1}  |\Omega|+ c_0 \norm{w^{\kappa+1}}_{L^2(\Omega)}^2.
\end{align*}
We observe that
\begin{align*}
\nabla(w_h^{\kappa+1})(x) = \mathds{1}_{\{w< h\}}(x)
\nabla(w^{\kappa+1})(x) \rightarrow \nabla(w^{\kappa+1})(x) 
\end{align*}
as $h \rightarrow \infty$ for a.e. $x\in \Omega$ . An application of Fatou's lemma while noting that $\frac{(\kappa+1)^2}{2\kappa+1} \geq 1$ for any $\kappa>0$ yields
\begin{align}\label{eq:w_est_1}
&\norm{\nabla(w^{\kappa+1})}_{L^2(\Omega)}^2 \leq c_0\frac{(\kappa+1)^2}{2\kappa+1}  ( \norm{w^{\kappa+1}}_{L^2(\Omega)}^2  + |\Omega|).
\end{align}
Defining
\begin{align}\label{eq:def_q}
q^{\kappa+1}:= w^{\kappa+1}+ 1
\end{align}
implies $q^{2(\kappa+1)}= (w^{\kappa+1}+ 1)^2 =w^{2(\kappa+1)} + 2w^{\kappa+1}+ 1 \geq w^{2(\kappa+1)} + 1 $ since $w\geq 0$ and
\begin{align*}
\int_\Omega w^{2(\kappa+1)} \d x  + |\Omega| \leq \int_\Omega q^{2(\kappa+1)} \d x.
\end{align*}
Since $\nabla (q^{\kappa+1}) = \nabla (w^{\kappa+1})$, 
we add $\norm{q^{\kappa+1}}_{L^2(\Omega)}^2$ to both sides of 
the  inequality \eqref{eq:w_est_1} to obtain
\begin{align*}
\norm{q^{\kappa+1}}_{H^1(\Omega)}^2  &\leq c_1 \frac{(\kappa+1)^{2}}{2\kappa+1} \norm{q^{\kappa+1}}_{L^2(\Omega)}^2
\end{align*}
for a constant $c_1 >0$. From the above inequality, we have that $q^{\kappa+1} \in H^1_0(\Omega)$ provided that $q^{\kappa+1} \in L^2(\Omega)$. We will show this inductively. From the Sobolev embedding $H^1_0(\Omega) \subset L^{p^\ast}(\Omega)$ in Proposition \ref{prop:sobolev_embedding}, we have
\begin{align*}
\norm{q}_{L^{p^\ast (\kappa+1)}(\Omega)}&= \norm{q^{\kappa+1}}_{L^{p^\ast}(\Omega)}^{\frac{1}{\kappa+1}} \\  \notag
&\leq c_2^{\frac{1}{\kappa+1}} \norm{q^{\kappa+1}}_{H^1(\Omega)}^{\frac{1}{\kappa+1}}\\  \notag
&\leq c_3^{\frac{1}{\kappa+1}}  \Big( \frac{(\kappa+1)^2}{2\kappa+1} \Big)^{\frac{1}{2(\kappa+1)}} \norm{q}_{L^{2(\kappa+1)}(\Omega)}
\end{align*}
for $c_2>0$ and $c_3\geq 1$. We will define a sequence inductively to show the desired estimate. 
Starting with $\kappa := \kappa_0 = 0$ implies 
\begin{align}\label{eq:k_0}
\norm{q}_{L^{p^\ast }(\Omega)}
&\leq c_3   \norm{q}_{L^{2}(\Omega)}  ,
\end{align}
where we notice that $\Big( \frac{(\kappa_0+1)^2}{2\kappa_0+1} \Big)^{\frac{1}{2(\kappa_0+1)}} =1$. From the definition of $q$ \eqref{eq:def_q}, we have for $\kappa = \kappa_0 = 0$ that
\begin{align*}
    q = w + 1.
\end{align*}
We utilize Young's inequality to obtain
\begin{align*}
    q^2 = (w + 1)^2 \leq 2 (w^2 + 1),
\end{align*}
which implies
\begin{align*}
    \norm{q}_{L^{p^\ast }(\Omega)}
            &\leq c_3 \Big(   \int_\Omega q^2 \d x \Big)^{1/2} \leq \sqrt{2}c_3 \Big(\int_\Omega  w^2 \d x \Big)^{1/2} + \sqrt{2}c_3|\Omega|^{1/2} =\sqrt{2}c_3 (\norm{w}_{L^2(\Omega)} + |\Omega|^{1/2}).
\end{align*}
From Part I, we have that there exists a constant $c_4>0$ such that
\begin{align}\label{eq:step1}
\norm{q}_{L^{p^\ast }(\Omega)}
&\leq  c_3c_4.
\end{align}
We continue by taking $\kappa = \kappa_1>0$ satisfying $2(\kappa_1 +1) = p^\ast$ and using \eqref{eq:k_0}. This yields
\begin{align}\label{eq:step2}
\norm{q}_{L^{p^\ast (\kappa_1+1)}(\Omega)} &\leq c_3^{\frac{1}{\kappa_1+1}}  \Big( \frac{(\kappa_1+1)^2}{2\kappa_1+1} \Big)^{\frac{1}{2(\kappa_1+1)}}  \norm{q}_{L^{2(\kappa_1+1)}(\Omega)}  \\  \notag
&=c_3^{\frac{1}{\kappa_1+1}} \Big( \frac{(\kappa_1+1)^2}{2\kappa_1+1} \Big)^{\frac{1}{2(\kappa_1+1)}}  \norm{q}_{L^{p^\ast}(\Omega)} \\  \notag
&\leq  c_3^{\frac{1}{\kappa_1+1}} \Big( \frac{(\kappa_1+1)^2}{2\kappa_1+1} \Big)^{\frac{1}{2(\kappa_1+1)}}  c_3  c_4. 
\end{align}
We notice that $\Big( \frac{(\kappa_0+1)^2}{2\kappa_0+1} \Big)^{\frac{1}{2(\kappa_0+1)}} = 1$ and $\frac{1}{\kappa_0 + 1} = 1$, then 
\begin{align}\label{eq:step3}
\norm{q}_{L^{p^\ast (\kappa_1+1)}(\Omega)} 
&\leq  c_3^{\frac{1}{\kappa_0+1}}  c_3^{\frac{1}{\kappa_1+1}} \Big( \frac{(\kappa_0+1)^2}{2\kappa_0+1} \Big)^{\frac{1}{2(\kappa_0+1)}} \Big( \frac{(\kappa_1+1)^2}{2\kappa_1+1} \Big)^{\frac{1}{2(\kappa_1+1)}}   c_4\\ \notag
&= c_3^{\sum_{j=0}^1 \frac{1}{\kappa_j+1}} \prod_{j=0}^1\Big( \frac{(\kappa_j+1)^2}{2\kappa_j+1} \Big)^{\frac{1}{2(\kappa_j+1)}}   c_4.
\end{align}
For the next iteration, we let $\kappa_2>0$ be such that $2(\kappa_2+1) = p^\ast (\kappa_1 + 1)$ and use the estimate \eqref{eq:step3} to obtain
\begin{align}\label{eq:step4}
    \norm{q}_{L^{p^\ast (\kappa_2+1)}(\Omega)} &\leq  c_3^{\frac{1}{\kappa_2+1}}  \Big( \frac{(\kappa_2+1)^2}{2\kappa_2+1} \Big)^{\frac{1}{2(\kappa_2+1)}}  \norm{q}_{L^{2(\kappa_2+1)}(\Omega)} \\ \notag
    &= c_3^{\frac{1}{\kappa_2+1}}  \Big( \frac{(\kappa_2+1)^2}{2\kappa_2+1} \Big)^{\frac{1}{2(\kappa_2+1)}}  \norm{q}_{L^{p^\ast (\kappa_1+1)}(\Omega)}\\ \notag
    &\leq  c_3^{\frac{1}{\kappa_2+1}}  \Big( \frac{(\kappa_2+1)^2}{2\kappa_2+1} \Big)^{\frac{1}{2(\kappa_2+1)}} \Big( c_3^{\sum_{j=0}^1 \frac{1}{\kappa_j+1}} \prod_{j=0}^1\Big( \frac{(\kappa_j+1)^2}{2\kappa_j+1} \Big)^{\frac{1}{2(\kappa_j+1)}}   c_4\Big)\\ \notag
    &=c_3^{\sum_{j=0}^2 \frac{1}{\kappa_j+1}} \prod_{j=0}^2\Big( \frac{(\kappa_j+1)^2}{2\kappa_j+1} \Big)^{\frac{1}{2(\kappa_j+1)}}   c_4.
\end{align}
We continue by constructing the following sequence
\begin{align*}
\kappa_0 : \hspace{1.2cm}\kappa_0 &= 0\\
\kappa_1 : 2(\kappa_1 +1) &= p^\ast\\
\kappa_2  :  2 (\kappa_2 +1) &=  p^\ast(\kappa_1 +1)\\
\kappa_3  :  2 (\kappa_3 +1) &=  p^\ast(\kappa_2 +1)\\
&  \vdots
\end{align*}
We observe that the sequence is increasing. Indeed, for $p^\ast >2$ and any $\ell \geq 1$, we have  
\begin{align*}
(\kappa_\ell + 1) = \Big(\frac{p^\ast}{2}\Big)^\ell.
\end{align*}
Repeating the arguments in \eqref{eq:step1}-\eqref{eq:step4}, we obtain for any $\ell \geq 0$
\begin{align}\label{eq:passlimit_term_q}
\norm{w}_{L^{p^\ast (\kappa_\ell+1)}(\Omega)} \leq  c_3^{\sum_{j=0}^\ell \frac{1}{\kappa_j+1}}  \prod_{j=0}^\ell \Big( \frac{(\kappa_j+1)^2}{2\kappa_j+1} \Big)^{\frac{1}{2(\kappa_j+1)}}   c_4.
\end{align}
 We wish to pass the limit $\ell \rightarrow \infty$. To this extent, we have to check the convergence of series and product on the right-hand side. Considering first $\prod_{j=0}^\ell \Big( \frac{(\kappa_j+1)^2}{2\kappa_j+1} \Big)^{\frac{1}{2(\kappa_j+1)}}$, we have from the definition of the logarithm and
\begin{align*}
\bigg(\frac{(\kappa_j+1)^2}{2\kappa_j+1}\bigg)^{\frac{1}{2(\kappa_j+1)}} \geq 1 \ \ \  \text{  for any }  \ \ \  \kappa_j>0,
\end{align*}
that
\begin{align*}
0<\log \Big( \prod_{j=0}^\ell \Big( \frac{(\kappa_j+1)^2}{2\kappa_j+1} \Big)^{\frac{1}{2(\kappa_j+1)}}\Big) &= \frac{1}{2} \sum_{j=0}^\ell \frac{1}{\kappa_j+1} \log \Big( \frac{(\kappa_j+1)^2}{2\kappa_j+1} \Big)\\
&\leq\sum_{j=0}^\ell \frac{1}{\kappa_j+1} \log ( \kappa_j+1)\\
&=\log \Big( \frac{p^\ast}{2}\Big) \sum_{j=0}^\ell j\frac{ 2^j}{(p^\ast)^j}.
\end{align*}
We test the root criterion for convergence, which yields
\begin{align*}
\liminf_{j\rightarrow \infty} \Big(j\frac{ 2^j}{(p^\ast)^j} \Big)^\frac{1}{j} \leq \frac{2}{p^\ast }< 1.
\end{align*}
Thus, the term $\prod_{j=0}^\ell \Big( \frac{(\kappa_j+1)^2}{2\kappa_j+1} \Big)^{\frac{1}{2(\kappa_j+1)}}$ converges as $\ell \rightarrow \infty$.
Next, we have that the series in the term $c_3^{\sum_{j=0}^\ell \frac{1}{\kappa_j+1}}$ is a geometric series since $\frac{1}{\kappa_j+1} = \Big( \frac{2}{p^\ast}\Big)^j$ for any $j\geq 0$ with $\frac{2}{p^\ast}<1$. Indeed,
\begin{align*}
\lim_{\ell \rightarrow\infty} \sum_{j=0}^\ell{\frac{1}{\kappa_j+1}} &= \frac{1}{1-\frac{2}{p^\ast}}= \frac{p^\ast}{p^\ast-2}.
\end{align*}
We are now in a position to pass the limit in \eqref{eq:passlimit_term_q}. 
Defining $r_\ell := p^\ast (\kappa_\ell+1)$, we have a sequence $\{r_\ell\}_\ell\subset (1,\infty)$ where $r_\ell \rightarrow \infty $ as $\ell \rightarrow \infty$. Thus, we may apply Lemma \ref{lemma:passingthelimit_bound} to obtain
\begin{align}\label{eq:estimate_on_q}
\norm{q}_{L^\infty(\Omega)}
&\leq c_5
\end{align}
for a constant $c_5>0$. We use \eqref{eq:def_q} to find the desired estimate. Indeed, for any $\kappa >0$
\begin{align*}
    q^{\kappa+1} = w^{\kappa+1}  + 1\geq w^{\kappa+1} \implies q\geq w.
\end{align*}
Consequently, from \eqref{eq:estimate_on_q}, we have
\begin{align}\label{eq:estimate_on_w}
\norm{w}_{L^\infty(\Omega)}
&\leq c_5,
\end{align}
as desired.
\subsection*{Part III.2: $\norm{w(t)}_{L^\infty(\Gamma_C)} +  \norm{w(t)}_{L^\infty(\Gamma_N)}\leq c_0$} The estimate follows directly by Part I-II and III.1 combined with Proposition \ref{prop:traceinfty}.
\subsection*{Part IV: $\norm{u_+}_{L^\infty_TL^\infty(\Bar{\Omega})} \leq c_0K$} 
In this part, we will combine the results from the three previous steps to obtain the desired estimate. We denote 
\begin{align*}
   h(x,t) :=    \frac{M(t) + K }{M(t) + K - u_+(x,t)}
\end{align*}
so that 
\begin{align*}
    w(x,t) = \log (h(x,t)).
\end{align*}
From the monotonicity and continuity of the logarithm, we have
\begin{align*}
    \sup_{x\in \Bar{\Omega}} \log (h(x,t)) =  \log ( \sup_{x\in \Bar{\Omega}} h(x,t))
\end{align*}
for a.e. $t\in (0,T)$. Consequently,
\begin{align*}
  \norm{w(t)}_{L^\infty(\Bar{\Omega})} 
   &= \log \Big(  \frac{M(t) + K }{ K} \Big)
\end{align*}
for a.e. $t\in (0,T)$. Therefore, it follows from \eqref{eq:estimate_on_w} in Part III.1-III.2 that 
\begin{align*}
     \frac{M(t) + K }{ K} \leq c_0
\end{align*}
for a.e. $t\in (0,T)$. Since $M(t) \leq M(t) + K$, we may conclude this proof.
\subsection*{Case 2: Estimating  $\norm{u_-}_{L^\infty_TL^\infty(\Bar{\Omega})} \leq c_0K$} Since $u_- \geq 0$,  we can utilize the same proof as for Case 1 by interchanging $u_+$ with $u_-$.
\end{proof}
\subsection*{Step 5 \textit{(Scheme for the approximated solution to \eqref{eq:eq1})}.}
For $n\in \N$, let $(u^{n-1},\beta^{n-1}) \in L^\infty_T (H^1_0(\Omega) \cap L^\infty(\Omega)) \times C([0,T];L^2(\Gamma_C)) \cap  L^\infty_T L^\infty(\Gamma_C)$ be known. We construct approximated solutions $\{(u^n,\beta^n)\}_{n\geq 1} \subset L^\infty_T (H^1_0(\Omega) \cap L^\infty(\Omega)) \times C([0,T];L^2(\Gamma_C)) \cap  L^\infty_T L^\infty(\Gamma_C)$ to \eqref{eq:eq1}, where $(u^n,\beta^n)$ is a solution to the scheme
\begin{subequations}\label{eq:u_beta_n_eq}
\begin{align}\label{eq:u_n}
\int_\Omega &\mu \nabla u^n(t) \cdot \nabla (v-u^n(t)) \d x\\ \notag
&+ \int_{\Gamma_C} [\varphi(\beta^{n-1}(t),  u^{n-1}(t),  v)-\varphi(\beta^{n-1}(t), u^{n-1}(t), u^n(t))] \d \sigma\\ \notag
&+ \int_{\Gamma_C} [\phi(\beta^{n-1}(t),  u^{n-1}(t),  v)-\phi(\beta^{n-1}(t), u^{n-1}(t), u^n(t))] \d \sigma\\
&\geq \int_{\Omega} f_0  (v-u^n(t)) \d x+ \int_{\Gamma_N} f_N ( v- u^n(t)) \d \sigma, \notag\\
\beta^n(t) &= \beta_0 + \int_0^t  H(\beta^n(s), u^n(s)) \d s
\label{eq:beta_eq}
\end{align}
\end{subequations}
for all $v\in H^1_0(\Omega)$, a.e. $t\in(0,T)$ for the initial guess $\beta^0 := \beta_0 \in L^\infty(\Gamma_C)$ and for any $u^0 \in L^\infty_T (H^1_0(\Omega) \cap L^\infty(\Omega))$.

\subsubsection*{Step 5.1 \textit{(Existence and uniqueness of $(u^n,\beta^n)\in  L^\infty_T (H^1_0(\Omega) \cap L^\infty(\Omega)) \times C([0,T];L^2(\Gamma_C)) \cap L^\infty_TL^\infty(\Gamma_C)$ to \eqref{eq:u_n}-\eqref{eq:beta_eq} for all $n\in \N$)}.} 
We establish existence and uniqueness by induction on $n$.  From Proposition \ref{prop:estn} and \ref{prop:u_bound} with $u_{\xi\beta} = u^n$, $\xi = u^{n-1}$, and $\beta = \beta^{n-1}$,  we obtain
\begin{align}\label{eq:est_u_n_V}
\norm{u^n}_{L^\infty_T H^1(\Omega)}     &\leq c(1 +   \norm{f_0}_{L^\infty_T L^\infty(\Omega)} + \norm{f_N}_{L^\infty_T L^\infty(\Gamma_N)}) \\ \notag
&+\frac{c_{1\varphi} c_0}{\mu_\ast} \norm{\beta^{n-1}}_{L^\infty_TL^\infty(\Gamma_C)}+ \frac{c_{2\varphi} c_0}{\mu_\ast} \norm{u^{n-1}}_{L^\infty_T  H^1(\Omega)} \\ \notag
&+ \frac{c_{1\phi} c_0}{\mu_\ast} \norm{\beta^{n-1}}_{L^\infty_TL^\infty(\Gamma_C)}^2 \norm{u^{n-1}}_{L^\infty_T  H^1(\Omega)},\\
\label{eq:estwinfn1}
\norm{u^n}_{L^\infty_T L^\infty(\Omega)} &\leq c (1+\norm{f_0}_{L^\infty_TL^\infty(\Omega)} +\norm{f_N}_{L^\infty_TL^\infty(\Gamma_N)})\\ \notag
&+ \frac{c_{1\varphi}c_0}{\mu_\ast}\norm{\beta^{n-1}}_{L^\infty_TL^\infty(\Gamma_C)} +\frac{c_{2\varphi}c_0}{\mu_\ast}\norm{u^{n-1}}_{L^\infty_TL^\infty(\Omega)}\\
& +\frac{c_{1\phi}c_0}{\mu_\ast}\norm{\beta^{n-1}}_{L^\infty_TL^\infty(\Gamma_C)}^2 \norm{u^{n-1}}_{L^\infty_TL^\infty(\Omega)}\notag
\end{align}
for all $n\in \N$. Moreover, applying Lemma \ref{lemma:calculation} to \eqref{eq:beta_eq} yields
\begin{align*}
\norm{\beta^n(t)}_{L^\infty(\Gamma_C)} &\leq \norm{\beta_0}_{L^\infty(\Gamma_C)} + c_{0\beta}T+   c_{3\beta}c_0  \norm{u^n}_{L^\infty_TL^\infty(\Omega)}^2  \int_0^t  \norm{\beta^n(s)}_{L^\infty(\Gamma_C)} \d s
\end{align*}
for all $n\in\N$ and a.e. $t\in (0,T)$. A standard Gr\"{o}nwall argument (see, e.g., \cite[Appendix B.2]{Evans2010}) implies
\begin{align}\label{eq:est_beta_useful}
\norm{\beta^n}_{L^\infty_TL^\infty(\Gamma_C)} &\leq \norm{\beta_0}_{L^\infty(\Gamma_C)}  (1+ cT \norm{u^n}^2_{L^\infty_TL^\infty(\Omega)} \mathrm{e}^{cT \norm{u^n}^2_{L^\infty_TL^\infty(\Omega)}} ) \\
&+ c_{0\beta}T(1+ cT \norm{u^n}^2_{L^\infty_TL^\infty(\Omega)} \mathrm{e}^{cT \norm{u^n}^2_{L^\infty_TL^\infty(\Omega)}}) \notag
\end{align}
for all $n\in \N$. 
\\
\\
\indent \textbf{Part I ($n=1$).} For $n=1$ with the initial guess $\beta^0 := \beta_0 \in L^\infty(\Gamma_C)$ and for any $u^0 \in L^\infty_T ( H^1_0(\Omega) \cap L^\infty(\Omega))$, we have from Step 2 that $u^1 \in L^\infty_T  H^1_0(\Omega)$ is the unique solution to \eqref{eq:u_n} for $n=1$. 
In addition, we have the following uniform estimate.
\begin{prop}\label{prop:n1}
Assume that \hyperref[assumptionvarphi]{$(H1)$}-\hyperref[assumptiophi]{$(H2)$} and \eqref{eq:assumptions_mu}-\eqref{eq:assumptionondata} hold. Let $\beta^0:=\beta_0 \in L^\infty(\Gamma_C)$, $u^0\in L^\infty_T ( H^1_0(\Omega) \cap L^\infty(\Omega))$, and $u^1$ be the solution to \eqref{eq:u_n} for $n=1$. Then 
\begin{align*}
\norm{u^1}_{L^\infty_T  H^1(\Omega)} +
\norm{u^1}_{L^\infty_T L^\infty(\Omega)}  &\leq c,
\end{align*}
where $c=c(\norm{(f_0,f_N,\beta_0,u^0)}_{L^\infty_T L^\infty(\Omega) \times L^\infty_T L^\infty(\Gamma_N) \times L^\infty(\Gamma_C)\times L^\infty_T ( H^1_0(\Omega) \cap L^\infty(\Omega))})$.
\end{prop}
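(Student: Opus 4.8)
The plan is to recognize that the statement is just the base case $n=1$ of the induction in Step 5.1, and that it reduces to applying Propositions \ref{prop:estn} and \ref{prop:u_bound} to the auxiliary problem with a fixed pair of data. Concretely, for $n=1$ the scheme \eqref{eq:u_n} is exactly Problem \ref{prob:aux} with $\xi = u^0$ and $\beta = \beta^0 := \beta_0$, so all the analytic work has already been done and what remains is to check admissibility of the data and to read off the explicit dependence of the constant.

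First I would note that $\beta_0 \in L^\infty(\Gamma_C)$, viewed as a function constant in $t$, belongs to $C([0,T];L^2(\Gamma_C)) \cap L^\infty_T L^\infty(\Gamma_C)$ (using $|\Gamma_C|<\infty$, so $L^\infty(\Gamma_C)\subset L^2(\Gamma_C)$), while $u^0 \in L^\infty_T(H^1_0(\Omega)\cap L^\infty(\Omega))$ by hypothesis; hence $(\xi,\beta)=(u^0,\beta_0)$ is an admissible pair for Problem \ref{prob:aux}. By Step 2 (that is, Theorem \ref{thm:pre_existence_uniqueness}), $u^1 \in L^\infty_T H^1_0(\Omega)$ exists and is unique. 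Applying Proposition \ref{prop:estn} and Proposition \ref{prop:u_bound} with $u_{\xi\beta}=u^1$, $\xi=u^0$, $\beta=\beta_0$ gives precisely the estimates \eqref{eq:est_u_n_V} and \eqref{eq:estwinfn1} specialized to $n=1$, $u^{n-1}=u^0$, $\beta^{n-1}=\beta_0$. Since $\beta_0$ is independent of $t$ one has $\norm{\beta^0}_{L^\infty_TL^\infty(\Gamma_C)} = \norm{\beta_0}_{L^\infty(\Gamma_C)}$, finite by \eqref{eq:assumptionondata}; and $\norm{u^0}_{L^\infty_T H^1(\Omega)}$, $\norm{u^0}_{L^\infty_T L^\infty(\Omega)}$ are finite by assumption on $u^0$. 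Together with $f_0 \in L^\infty_TL^\infty(\Omega)$ and $f_N \in L^\infty_TL^\infty(\Gamma_N)$, every term on the right-hand side of \eqref{eq:est_u_n_V} and \eqref{eq:estwinfn1} is then bounded by a constant depending only on $\norm{(f_0,f_N,\beta_0,u^0)}_{L^\infty_TL^\infty(\Omega)\times L^\infty_TL^\infty(\Gamma_N)\times L^\infty(\Gamma_C)\times L^\infty_T(H^1_0(\Omega)\cap L^\infty(\Omega))}$ (and on the fixed structural constants $\mu_\ast, c_0, c_{1\varphi}, c_{2\varphi}, c_{1\phi}$). Adding the two bounds yields the claimed inequality.

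I expect no real obstacle here beyond bookkeeping: the genuinely delicate estimates are the $L^\infty$ bound of Proposition \ref{prop:u_bound} (the Moser iteration) and the $H^1$ bound of Proposition \ref{prop:estn}, both already established, and the only points worth spelling out are that $\beta_0$ qualifies as a time-constant element of the required function class and that the constant $c$ has the stated form. In particular, the fact that $c$ is independent of the iteration index is automatic in this step because $n=1$ is fixed; the subtlety of obtaining bounds \emph{uniform} in $n$ is deferred to the inductive step (where Lemma \ref{lemma:estn} and the Gr\"onwall estimate \eqref{eq:est_beta_useful} together with the smallness assumption \eqref{eq:smallness} will be used).
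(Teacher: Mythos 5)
Your proposal is correct and follows essentially the same route as the paper: specialize Propositions \ref{prop:estn} and \ref{prop:u_bound} (equivalently, the estimates \eqref{eq:est_u_n_V} and \eqref{eq:estwinfn1}) to $n=1$, observe that the right-hand sides then involve only $f_0$, $f_N$, $\beta_0$, and $u^0$, and add the two bounds. The one cosmetic difference is that the paper closes by invoking the smallness condition \eqref{eq:smallness} to package the constant, whereas you correctly observe that no smallness is actually needed at the fixed base step $n=1$ (it only becomes essential for the uniformity in $n$ in the inductive step); both are valid readings and the underlying estimate is identical.
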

\begin{proof}
From \eqref{eq:est_u_n_V} and \eqref{eq:estwinfn1} for $n=1$, we have
\begin{align*}
\norm{u^1}_{L^\infty_T H^1(\Omega)}     &\leq c(1 +   \norm{f_0}_{L^\infty_T L^\infty(\Omega)} + \norm{f_N}_{L^\infty_T L^\infty(\Gamma_N)}) +\frac{c_{1\varphi} c_0}{\mu_\ast} \norm{\beta_0}_{L^\infty(\Gamma_C)}\\
&+ \frac{c_{2\varphi} c_0}{\mu_\ast} \norm{u^{0}}_{L^\infty_T H^1(\Omega)} + \frac{c_{1\phi} c_0\norm{\beta_0}_{L^\infty(\Gamma_C)}^2 }{\mu_\ast} \norm{u^0}_{L^\infty_T  H^1(\Omega)},
\\
\norm{u^1}_{L^\infty_T L^\infty(\Omega)} &\leq c (1+\norm{f_0}_{L^\infty_TL^\infty(\Omega)} +\norm{f_N}_{L^\infty_TL^\infty(\Gamma_N)}) + \frac{c_{1\varphi}c_0}{\mu_\ast}\norm{\beta_0}_{L^\infty(\Gamma_C)} \\
&+\frac{c_{2\varphi}c_0}{\mu_\ast}\norm{u^{0}}_{L^\infty_TL^\infty(\Omega)} +\frac{c_{1\phi}c_0\norm{\beta_0}_{L^\infty(\Gamma_C)}^2 }{\mu_\ast}\norm{u^{0}}_{L^\infty_TL^\infty(\Omega)}.
\end{align*}
We utilize the smallness-condition \eqref{eq:smallness} to obtain the desired estimate.
\end{proof}
We next define the complete metric space
\begin{align}\label{eq:constraction_space}
X_T(a) := \{ v \in C([0,T];L^2(\Gamma_C)) : \text{there is a } a\in \R_+ \text{ such that } \norm{v}_{L^\infty_TL^2(\Gamma_C)}\leq  a \}
\end{align}
and the application $\Lambda :X_T(a)  \rightarrow X_T(a)$ 
\begin{align}\label{eq:Lambda1}
\Lambda \beta^1(t) = \beta_0 + \int_0^t  H(\beta^1(s), u^1(s)) \d s.
\end{align}
We verify that $\beta^1$ is a solution to \eqref{eq:beta_eq} for $n=1$ in the next lemma. 
\begin{lem}\label{lemma:lambda_fixedpoint}
Assume that \hyperref[assumptionvarphi]{$(H1)$}-\hyperref[assumptionH]{$(H3)$} and \eqref{eq:assumptions_mu}-\eqref{eq:assumptionondata} hold. Let $u^1 \in L^\infty_T ( H^1_0(\Omega) \cap L^\infty(\Omega))$ be a solution to \eqref{eq:u_n} for $n=1$ with $\beta^0=\beta_0 \in L^\infty(\Gamma_C)$ and $u^0 \in L^\infty_T ( H^1_0(\Omega) \cap L^\infty(\Omega))$. Then the operator   $\Lambda :X_T(a)  \rightarrow X_T(a)$ defined by \eqref{eq:Lambda1} has a unique fixed point.    
\end{lem}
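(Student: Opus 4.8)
The plan is to apply the Banach fixed point theorem to $\Lambda$ on the complete metric space $X_T(a)$, the latter being a closed ball in $C([0,T];L^2(\Gamma_C))$ with the $L^\infty_TL^2(\Gamma_C)$-metric; the radius $a$ will be chosen first and $T$ shrunk afterwards if needed. Throughout, $u^1\in L^\infty_T(H^1_0(\Omega)\cap L^\infty(\Omega))$ is held fixed, and by Proposition \ref{prop:traceinfty} its trace on $\Gamma_C$ satisfies $\|u^1(s)\|_{L^\infty(\Gamma_C)}\le c_0\|u^1\|_{L^\infty_TL^\infty(\Omega)}=:M_1<\infty$ for a.e. $s$, with $M_1$ controlled by the data through Proposition \ref{prop:n1}.

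First I would check that $\Lambda$ is well defined. Since \hyperref[assumptionH]{$(H3)$}$(i)$ together with the local Lipschitz bound \hyperref[assumptionH]{$(H3)$}$(ii)$ in $(y,r)$ make $H$ a Carath\'{e}odory function, the superposition $x\mapsto H(x,\beta^1(s,x),u^1(s,x))$ is measurable, and Lemma \ref{lemma:calculation} gives $\|H(\beta^1(s),u^1(s))\|_{L^2(\Gamma_C)}\le c(1+M_1^2a)$ for a.e. $s$ whenever $\beta^1\in X_T(a)$. Hence $s\mapsto H(\beta^1(s),u^1(s))$ lies in $L^1(0,T;L^2(\Gamma_C))$ and $\Lambda\beta^1(t)=\beta_0+\int_0^tH(\beta^1(s),u^1(s))\d s$ is Lipschitz continuous from $[0,T]$ into $L^2(\Gamma_C)$. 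For the self-map property I would fix $a:=2\|\beta_0\|_{L^2(\Gamma_C)}+1$, so that $\|\Lambda\beta^1(t)\|_{L^2(\Gamma_C)}\le\|\beta_0\|_{L^2(\Gamma_C)}+cT(1+M_1^2a)$ for all $t\in[0,T]$; this is $\le a$ once $T$ is taken small enough, and shrinking $T$ is harmless since the restriction of $u^1$ still solves \eqref{eq:u_n} for $n=1$ on the smaller interval with the same bound $M_1$.

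For the contraction, given $\beta_1,\beta_2\in X_T(a)$ I would apply \hyperref[assumptionH]{$(H3)$}$(ii)$ with $y_i=\beta_i(s,x)$ and $r_1=r_2=u^1(s,x)$: the first two terms vanish, leaving the pointwise bound $|H(x,\beta_1(s,x),u^1(s,x))-H(x,\beta_2(s,x),u^1(s,x))|\le c_{3\beta}|u^1(s,x)|^2|\beta_1(s,x)-\beta_2(s,x)|$. Taking $L^2(\Gamma_C)$-norms and using Proposition \ref{prop:traceinfty} again gives $\|\Lambda\beta_1(t)-\Lambda\beta_2(t)\|_{L^2(\Gamma_C)}\le c_{3\beta}M_1^2\int_0^t\|\beta_1(s)-\beta_2(s)\|_{L^2(\Gamma_C)}\d s$ for all $t\in[0,T]$, and iterating this inequality $n$ times yields $\|\Lambda^n\beta_1-\Lambda^n\beta_2\|_{L^\infty_TL^2(\Gamma_C)}\le\frac{(c_{3\beta}M_1^2T)^n}{n!}\|\beta_1-\beta_2\|_{L^\infty_TL^2(\Gamma_C)}$, so some iterate $\Lambda^n$ is a contraction (alternatively, one may simply shrink $T$ further so that $c_{3\beta}M_1^2T<1$). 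The Banach fixed point theorem, applied in its iterated form on $X_T(a)$, then produces a unique fixed point $\beta^1\in X_T(a)$, which by \eqref{eq:Lambda1} is exactly the unique solution of \eqref{eq:beta_eq} for $n=1$.

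I expect the only genuinely delicate point to be the bookkeeping in the self-map step: $a$ must be chosen first, depending only on $\|\beta_0\|_{L^2(\Gamma_C)}$, and then $T$ small depending on $a$, $M_1$, $c_{0\beta}$, $c_{3\beta}$, and $|\Gamma_C|$, so that both the self-map and (if the iterated-contraction trick is not used) the contraction estimates close; one must also verify that this choice of $T$ is consistent with the $u^1$ already in hand, which it is by restriction. Everything else is the standard Picard--Lindel\"{o}f/Banach iteration.
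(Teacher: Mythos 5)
Your proposal is correct and follows the same overall structure as the paper's proof (well-definedness and self-map of $\Lambda$ on a closed ball $X_T(a)$, followed by Banach fixed point), but the contraction step is handled by a genuinely different standard device. The paper equips $C([0,T];L^2(\Gamma_C))$ with the equivalent weighted norm $\norm{z}_\omega = \max_{s\in[0,T]} \mathrm{e}^{-\omega s}\norm{z(s)}_{L^2(\Gamma_C)}$ and then makes $\Lambda$ itself a contraction by taking $\omega$ large, which lets $T$ be fixed once and for all in the self-map step. You instead run the Gronwall-type iteration $\norm{\Lambda^n\beta_1 - \Lambda^n\beta_2}_{L^\infty_T L^2(\Gamma_C)} \leq \frac{(c_{3\beta}M_1^2 T)^n}{n!}\norm{\beta_1-\beta_2}_{L^\infty_T L^2(\Gamma_C)}$ and invoke the iterated-contraction form of the Banach fixed point theorem, with the further option of simply shrinking $T$ so that $c_{3\beta}M_1^2 T<1$. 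These are two faces of the same Picard--Lindel\"of machinery and both avoid shrinking $T$ at the contraction stage (the weighted-norm trick and the $n!$-factorial decay are interchangeable here), so the difference is purely cosmetic; arguably the weighted norm is slightly tidier because it keeps the map $\Lambda$ itself, rather than a power $\Lambda^n$, as the contraction. Your bookkeeping on the order in which $a$ and $T$ are fixed (pick $a$ first from $\norm{\beta_0}_{L^2(\Gamma_C)}$, then shrink $T$) is the reverse of the paper's ($T$ first, then $a$), but both orders close the self-map estimate; your explicit observation that restricting $u^1$ to a smaller time interval is harmless is a point the paper leaves implicit. You also add a measurability remark (Carath\'eodory structure of $H$ from \hyperref[assumptionH]{$(H3)$}$(i)$--$(ii)$) that the paper does not spell out but which is indeed needed for $\Lambda$ to be well defined.
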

\begin{proof}
The proof relies on the Banach fixed-point theorem. We therefore need to verify that the map is indeed well-defined and that it is a contractive mapping on $X_T(a)$. For the sake of presentation, we split the proof into two steps.
\\
\indent \textbf{Step i}  \textit{(The operator $\Lambda$ is well-defined on $X_T(a)$)}. 	Indeed, $\Lambda \beta^1 \in X_T(a)$. We first prove that for given $\beta^1 \in X_T(a)$, then $\norm{\Lambda \beta^1}_{L^\infty_T L^2(\Gamma_C)} \leq a$. 
Using Lemma \ref{lemma:calculation}, H\"{o}lder's inequality, and Proposition \ref{prop:traceinfty} together with Proposition \ref{prop:n1} implies
\begin{align}\label{eq:lamda_line1}
    \norm{\Lambda \beta^1(t)}_{L^2(\Gamma_C)}
    &\leq \norm{\beta_0}_{L^2(\Gamma_C)} +\int_0^t ( c_{0\beta}  +c_{3\beta} \norm{u^1(s)}_{L^\infty(\Gamma_C)}^2 \norm{\beta^1(s)}_{L^2(\Gamma_C)})\d s  \\  \notag
    &\leq \norm{\beta_0}_{L^2(\Gamma_C)} + Tc_{0\beta} + T c_{3\beta} c_0\norm{u^1}_{L^\infty_TL^\infty(\Omega)}^2\norm{\beta^1}_{L^\infty_TL^2(\Gamma_C)}.
\end{align}
We choose $T>0$ such that
\begin{align}\label{eq:T}
T \sim  \frac{1}{c(\norm{(f_0,f_N,\beta_0,u^0)}^2_{L^\infty_T L^\infty(\Omega) \times L^\infty_T L^\infty(\Gamma_N) \times L^\infty(\Gamma_C)\times L^\infty_T ( H^1(\Omega) \cap L^\infty(\Omega))})}.
\end{align}
We may now choose $a\geq0$ so that it provides the desired upper bound.
\\
\indent
It remains to show that $\Lambda \beta^1(t)$ is continuous on $L^2(\Gamma_C)$ for given $\beta^1 \in  X_T(a)$ and $u^1 \in L^\infty_T ( H^1_0(\Omega) \cap L^\infty(\Omega))$. Let $t, t'\in [0,T]$, then with no loss of generality, we assume that $t < t'$. From Lemma \ref{lemma:calculation} and Proposition \ref{prop:traceinfty}, we deduce
\begin{align*}
\norm{\Lambda \beta^1(t') - \Lambda \beta^1(t)}_{L^2(\Gamma_C)} & \leq \int_t^{t'} | H(\beta^1(s),u^1(s))| \d s\\ 
&\leq   c_{0\beta}|t'-t|+ c_{3\beta}c_0 \norm{u^1}_{L^\infty_TL^\infty(\Omega)}^2 \norm{\beta^1}_{L^\infty_T L^2(\Gamma_C)}|t'-t|. 
\end{align*}
Now, Proposition \ref{prop:n1} implies
\begin{align*}
\norm{\Lambda \beta^1(t') - \Lambda \beta^1(t)}_{L^2(\Gamma_C)} 
&\leq c|t'-t|  .
\end{align*}
Passing the limit $|t'-t| \rightarrow 0$, we have that indeed $\Lambda \beta^1\in X_T(a)$.
\\
\indent\textbf{Step ii} \textit{(The application $\Lambda : X_T(a) \rightarrow X_T(a)$ is a contractive mapping)}. 	Let $\beta^1_j \in X_T(a)$, $j=1,2$, and let $ u^1 \in L^\infty_T ( H^1_0(\Omega) \cap  L^\infty(\Omega))$ be the unique solution to \eqref{eq:u_n} for $n=1$. We begin by introducing a new norm for the space $C([0,T];L^2(\Gamma_C))$
\begin{equation*}
\norm{z}_\omega = \max_{s\in[0,T]} \mathrm{e}^{-\omega s} \norm{z(s)}_{L^2(\Gamma_C)}
\end{equation*}
where $\omega>0$ is chosen later. We notice that $C([0,T];L^2(\Gamma_C))$ with $\norm{\cdot}_\omega$ is complete and $\norm{\cdot}_\omega$  is  equivalent to the norm on $C([0,T];L^2(\Gamma_C))$ (see, e.g., \cite[Section 4.2]{Cheney2001}).  We first apply \hyperref[assumptionH]{$(H3)$}$(ii)$, H\"{o}lder's inequality, and Proposition \ref{prop:traceinfty} to \eqref{eq:Lambda1}. This yields
\begin{align*}
&\norm{\Lambda \beta^1_1(t) - \Lambda \beta^1_2(t)}_{L^2(\Gamma_C)} \leq c_{3\beta} c_0\norm{u^1}_{L^\infty_TL^\infty(\Omega)}^2  \int_0^t \norm{\beta^1_1(s) -  \beta^1_2(s)}_{L^2(\Gamma_C)} \d s
\end{align*}
for a.e. $t\in (0,T)$. Then Proposition \ref{prop:n1} implies
\begin{align*}
\mathrm{e}^{-\omega t} \norm{\Lambda \beta^1_1(t) - \Lambda \beta^1_2(t)}_{L^2(\Gamma_C)} &\leq  c \int_0^t \mathrm{e}^{-\omega t} \mathrm{e}^{\omega s} \mathrm{e}^{-\omega s}  \norm{\beta^1_1(s) - \beta^1_2(s)}_{L^2(\Gamma_C)} ds\\ 
&\leq  c   \norm{\beta^1_1 - \beta^1_2}_\omega \int_0^t \mathrm{e}^{-\omega t} \mathrm{e}^{\omega s}  ds \\
&\leq \frac{c}{\omega}  \norm{\beta^1_1 - \beta^1_2}_\omega 
\end{align*}
for a.e. $t\in (0,T)$. Choosing $\omega > c$ implies that $\Lambda$ is a contraction on $X_T(a)$, and thus we may conclude by the Banach fixed-point theorem that $\beta^1 \in X_T(a)$ is the unique fixed point to \eqref{eq:Lambda1}. 
\end{proof}
In the proposition below, we find a boundedness result for $\beta^1$.
\begin{prop}\label{prop:est_beta}
Assume that \hyperref[assumptionvarphi]{$(H1)$}-\hyperref[assumptionH]{$(H3)$} and \eqref{eq:assumptions_mu}-\eqref{eq:assumptionondata} hold. Let $(u^1,\beta^1) \in  L^\infty_T ( H^1_0(\Omega) \cap L^\infty(\Omega)) \times C([0,T];L^2(\Gamma_C))$ be the solution to \eqref{eq:u_n}-\eqref{eq:beta_eq}, then
\begin{align*}
\norm{\beta^1}_{L^\infty_TL^\infty(\Gamma_C)} \leq c
\end{align*}
for $c=c(\norm{(f_0,f_N,\beta_0,u^0)}_{L^\infty_T L^\infty(\Omega) \times L^\infty_T L^\infty(\Gamma_N) \times L^\infty(\Gamma_C)\times L^\infty_T ( H^1(\Omega) \cap L^\infty(\Omega))})$.
\end{prop}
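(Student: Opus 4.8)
The plan is to run a Grönwall argument directly on the integral form \eqref{eq:beta_eq} with $n=1$; in fact the required bound is precisely \eqref{eq:est_beta_useful} specialised to $n=1$, combined with the control on $u^1$ from Proposition \ref{prop:n1}. First I would start from
\[
\beta^1(x,t) = \beta_0(x) + \int_0^t H(\beta^1(x,s),u^1(x,s))\,\d s \qquad \text{for a.e. } x\in\Gamma_C,\ t\in(0,T),
\]
and invoke the pointwise bound $|H(x,y,r)| \le c_{0\beta} + c_{3\beta}|y||r|^2$ from Lemma \ref{lemma:calculation}. Taking absolute values, then the essential supremum over $x\in\Gamma_C$, and using Proposition \ref{prop:traceinfty} to estimate $\norm{u^1(s)}_{L^\infty(\Gamma_C)} \le c_0\norm{u^1(s)}_{L^\infty(\Omega)}$, yields
\[
\norm{\beta^1(t)}_{L^\infty(\Gamma_C)} \le \norm{\beta_0}_{L^\infty(\Gamma_C)} + c_{0\beta}T + c_{3\beta}c_0\norm{u^1}_{L^\infty_TL^\infty(\Omega)}^2 \int_0^t \norm{\beta^1(s)}_{L^\infty(\Gamma_C)}\,\d s
\]
for a.e. $t\in(0,T)$.

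Next I would apply the integral form of Grönwall's inequality (e.g. \cite[Appendix B.2]{Evans2010}) to the scalar function $t\mapsto\norm{\beta^1(t)}_{L^\infty(\Gamma_C)}$, which gives
\[
\norm{\beta^1}_{L^\infty_TL^\infty(\Gamma_C)} \le \big(\norm{\beta_0}_{L^\infty(\Gamma_C)} + c_{0\beta}T\big)\exp\!\big(c_{3\beta}c_0T\norm{u^1}_{L^\infty_TL^\infty(\Omega)}^2\big).
\]
Finally, Proposition \ref{prop:n1} bounds $\norm{u^1}_{L^\infty_TL^\infty(\Omega)}$ by a constant depending only on $\norm{(f_0,f_N,\beta_0,u^0)}$ in the norms appearing in the statement, so the exponent --- hence the entire right-hand side --- is controlled by a constant of the claimed form; taking the supremum over $t\in(0,T)$ completes the argument.

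\textbf{On the main difficulty.} There is no genuine obstacle here: this is a routine Grönwall estimate and amounts to the base case of the subsequent induction on $n$. The only point worth flagging is that the Grönwall constant depends exponentially on $T\norm{u^1}_{L^\infty_TL^\infty(\Omega)}^2$. For $n=1$ this is harmless, since $u^1$ is already controlled by the data alone, but it foreshadows the genuinely delicate issue of making the analogous estimate for general $n$ \emph{uniform} in $n$ --- which is why the final time $T$ must be chosen small, as in \eqref{eq:T}, and why Lemma \ref{lemma:estn} enters the later steps.
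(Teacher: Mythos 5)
Your proof is correct and follows essentially the same route as the paper: apply the pointwise bound on $H$ from Lemma \ref{lemma:calculation} together with Proposition \ref{prop:traceinfty}, run Grönwall on the $\beta$-equation to obtain \eqref{eq:est_beta_useful} at $n=1$, and then absorb $\norm{u^1}_{L^\infty_TL^\infty(\Omega)}$ via Proposition \ref{prop:n1}. The one small presentational difference is that the paper explicitly invokes the smallness of $T$ as in \eqref{eq:T} at this stage so that the exponential Grönwall factor collapses to a constant depending only on the data norms, whereas you defer that remark to the general-$n$ discussion; the substance is the same.
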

\begin{proof}
Taking $n=1$ in \eqref{eq:est_beta_useful} yields
\begin{align*}
\norm{\beta^1}_{L^\infty_TL^\infty(\Gamma_C)} &\leq \norm{\beta_0}_{L^\infty(\Gamma_C)}  (1+ cT \norm{u^1}^2_{L^\infty_TL^\infty(\Omega)} \mathrm{e}^{cT \norm{u^1}^2_{L^\infty_TL^\infty(\Omega)}})\\
&+c_{0\beta} T(1+ cT \norm{u^1}^2_{L^\infty_TL^\infty(\Omega)} \mathrm{e}^{cT \norm{u^1}^2_{L^\infty_TL^\infty(\Omega)}}).
\end{align*}
Keeping in mind Proposition \ref{prop:n1}, we choose $T>0$ as in \eqref{eq:T} to complete the proof.
\end{proof}

\textbf{Part II (Induction step).}
For the induction step, we let $(u^{n-1},\beta^{n-1}) \in L^\infty_T ( H^1_0(\Omega) \cap L^\infty(\Omega)) \times C([0,T];L^2(\Gamma_C)) \cap L^\infty_T L^\infty(\Gamma_C)$ be the solution to \eqref{eq:u_n}-\eqref{eq:beta_eq} for $n\in \N$. It then follows by Theorem \ref{thm:pre_existence_uniqueness} that $\{u^n\}_{n\geq 1}\subset L^\infty_T  H^1_0(\Omega)$ is the approximated solution to \eqref{eq:u_n}. We then find the following uniform estimate on $u^n$ for $n\in \N$.
\begin{prop}\label{prop:n}
Assume that \hyperref[assumptionvarphi]{$(H1)$}-\hyperref[assumptionH]{$(H3)$} and \eqref{eq:assumptions_mu}-\eqref{eq:assumptionondata} hold. Let $(u^{n-1},\beta^{n-1}) \in L^\infty_T ( H^1_0(\Omega) \cap L^\infty(\Omega)) \times C([0,T];L^2(\Gamma_C)) \cap L^\infty_T L^\infty(\Gamma_C)$ be the solution to \eqref{eq:u_n}-\eqref{eq:beta_eq} for any $n\in \N$ with $\beta^0 =\beta_0 \in L^\infty(\Gamma_C)$ and $u^0 \in L^\infty_T ( H^1_0(\Omega) \cap L^\infty(\Omega))$. Then, the following estimate holds
\begin{align}\label{eq:estVn1}
\norm{u^n}_{L^\infty_T  H^1(\Omega)} + \norm{u^n}_{L^\infty_TL^\infty(\Omega)}& \leq c
\end{align}
for any $n \in \N$ and with $c=c(\norm{(f_0,f_N,\beta_0,u^0)}_{L^\infty_T L^\infty(\Omega) \times L^\infty_T L^\infty(\Gamma_N)\times L^\infty(\Gamma_C)\times  L^\infty_T ( H^1(\Omega) \cap L^\infty(\Omega))})$.
\end{prop}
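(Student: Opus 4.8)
The plan is to run an induction on $n$ that controls $u^n$ and $\beta^n$ simultaneously, using the smallness assumption \eqref{eq:smallness} to turn the estimates \eqref{eq:est_u_n_V}--\eqref{eq:estwinfn1} into a genuine contraction, and choosing the horizon $T$ small so that the ODE \eqref{eq:beta_eq} keeps $\beta^n$ close to $\beta_0$. Since \eqref{eq:smallness} reads $c_{2\varphi}c_0+c_{1\phi}c_0\norm{\beta_0}_{L^\infty(\Gamma_C)}^2<\mu_\ast$, I would first fix a constant $B>\norm{\beta_0}_{L^\infty(\Gamma_C)}$ close enough to $\norm{\beta_0}_{L^\infty(\Gamma_C)}$ that
\[
L:=\frac{c_{2\varphi}c_0+c_{1\phi}c_0B^2}{\mu_\ast}<1 ,
\]
and then, with $B$ and $L$ frozen (both depending only on the data), set
\begin{align*}
\bar D&:=c\bigl(1+\norm{f_0}_{L^\infty_T L^\infty(\Omega)}+\norm{f_N}_{L^\infty_T L^\infty(\Gamma_N)}\bigr)+\frac{c_{1\varphi}c_0}{\mu_\ast}B,\\
C_\ast&:=\frac{2\bar D}{1-L}+\norm{u^0}_{L^\infty_T H^1(\Omega)}+\norm{u^0}_{L^\infty_T L^\infty(\Omega)} .
\end{align*}
The assertion \eqref{eq:estVn1} will follow from the stronger claim that $\norm{\beta^n}_{L^\infty_T L^\infty(\Gamma_C)}\leq B$ and $\norm{u^n}_{L^\infty_T H^1(\Omega)}+\norm{u^n}_{L^\infty_T L^\infty(\Omega)}\leq C_\ast$ for every $n\geq 0$.

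The base case $n=0$ is immediate: $\beta^0=\beta_0$, so $\norm{\beta^0}_{L^\infty_T L^\infty(\Gamma_C)}=\norm{\beta_0}_{L^\infty(\Gamma_C)}<B$, and the bound for $u^0$ holds by the very definition of $C_\ast$. For the inductive step, assuming the two bounds at index $n-1$, I would insert $\norm{\beta^{n-1}}_{L^\infty_T L^\infty(\Gamma_C)}\leq B$ into \eqref{eq:est_u_n_V} and \eqref{eq:estwinfn1}; with the line-dependent constant absorbed into $\bar D$, both collapse to
\[
\norm{u^n}_{L^\infty_T X}\leq \bar D+L\,\norm{u^{n-1}}_{L^\infty_T X}\qquad\text{for }X=H^1(\Omega)\text{ and }X=L^\infty(\Omega) .
\]
Iterating this down to $u^0$ with $0<L<1$, exactly as in the proof of Lemma \ref{lemma:estn}, gives $\norm{u^n}_{L^\infty_T X}\leq \bar D/(1-L)+\norm{u^0}_{L^\infty_T X}$ for each such $X$, hence the $C_\ast$-bound for $u^n$ (for $n=1$ this reproduces Proposition \ref{prop:n1}). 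Feeding $\norm{u^n}_{L^\infty_T L^\infty(\Omega)}\leq C_\ast$ into the Gr\"onwall-type estimate \eqref{eq:est_beta_useful} then yields
\[
\norm{\beta^n}_{L^\infty_T L^\infty(\Gamma_C)}\leq\bigl(\norm{\beta_0}_{L^\infty(\Gamma_C)}+c_{0\beta}T\bigr)\bigl(1+cTC_\ast^2\,\mathrm{e}^{cTC_\ast^2}\bigr),
\]
whose right-hand side tends to $\norm{\beta_0}_{L^\infty(\Gamma_C)}$ as $T\rightarrow 0^+$; so, possibly shrinking the $T$ of \eqref{eq:T} — the new choice depending only on the data through $B$, $C_\ast$, $c_{0\beta}$ — makes it $\leq B$ (this is Proposition \ref{prop:est_beta} for $n=1$). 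This closes the induction, and the $u^n$-part is precisely \eqref{eq:estVn1}.

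The delicate point, and the one I would take most care with, is the circular coupling built into the scheme: the bound for $u^n$ needs control of $\beta^{n-1}$, while the bound for $\beta^n$ needs control of $u^n$. Two features break the circle. First, \eqref{eq:smallness} is a \emph{strict} inequality, so $B$ can be chosen strictly larger than $\norm{\beta_0}_{L^\infty(\Gamma_C)}$ while still giving $L<1$; this slack is exactly what allows $\norm{\beta^n}_{L^\infty_T L^\infty(\Gamma_C)}$ to exceed $\norm{\beta_0}_{L^\infty(\Gamma_C)}$ by a fixed margin without spoiling the contraction. Second, the constant $C_\ast$ bounding the $u$-iterates depends on $T$ only through $\norm{f_0}_{L^\infty_T L^\infty(\Omega)}$ and $\norm{f_N}_{L^\infty_T L^\infty(\Gamma_N)}$, which are nonincreasing in $T$; hence $C_\ast$ is dominated by a data constant independent of the final choice of $T$, and only afterwards is $T$ shrunk to tame $\beta^n$ through \eqref{eq:est_beta_useful} — so there is no vicious circle. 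One should finally check that the $T$ selected here is no larger than the one already fixed in Step 5.1 for the base case; if necessary, take the minimum of the two.
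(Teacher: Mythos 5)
Your proof is correct, and it is cleaner than the paper's. Both arguments begin from the same ingredients — the estimates \eqref{eq:est_u_n_V}, \eqref{eq:estwinfn1} and the Gr\"onwall bound \eqref{eq:est_beta_useful} — but the paper substitutes the raw Gr\"onwall bound $\norm{\beta^{n-1}}\lesssim \norm{\beta_0}+T\norm{u^{n-1}}^2\mathrm{e}^{cT\norm{u^{n-1}}^2}$ directly into the $u^n$-estimate, producing the recursive quantities $R(n-1)$ and $\gamma_{n-1}$ whose control rests on a somewhat informal claim that the $\gamma_{n-1}$ are decreasing (justified by ``restricting $\gamma_{n-1}<1$ for each $n$'', which is close to assuming the conclusion). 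You avoid this by exploiting the strictness of \eqref{eq:smallness}: fixing $B>\norm{\beta_0}_{L^\infty(\Gamma_C)}$ still keeping $L=\big(c_{2\varphi}c_0+c_{1\phi}c_0B^2\big)/\mu_\ast<1$ converts the $u^n$-estimates into an honest contraction with a constant $\bar D$ and a fixed $C_\ast$ depending only on data, and the Gr\"onwall step then closes the loop by pushing $\norm{\beta^n}$ back below $B$ for $T$ small. The two points you flag — that $C_\ast$ is monotone nonincreasing in $T$, so the later shrinking of $T$ does not feed back into $C_\ast$, and that the final $T$ must also respect the horizon already chosen in Step 5.1 — are exactly the hygiene needed to make the simultaneous induction airtight, and you state them. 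This reorganisation around a fixed threshold $B$ and a strong two-part induction hypothesis is a genuine improvement in rigour over the paper's $\gamma_{n-1}$-bookkeeping while reaching the same conclusion and the same $T$-dependence on the data.
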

\begin{proof}
We begin by looking at \eqref{eq:est_beta_useful} at level $n-1$, that is
\begin{align*}
\norm{\beta^{n-1}}_{L^\infty_TL^\infty(\Gamma_C)} &\leq \norm{\beta_0}_{L^\infty(\Gamma_C)}  (1+ cT \norm{u^{n-1}}^2_{L^\infty_TL^\infty(\Omega)}\mathrm{e}^{cT \norm{u^{n-1}}^2_{L^\infty_TL^\infty(\Omega)}}) \\ \notag
&+ cT(1+ cT \norm{u^{n-1}}^2_{L^\infty_TL^\infty(\Omega)} \mathrm{e}^{cT \norm{u^{n-1}}^2_{L^\infty_TL^\infty(\Omega)}}) \\ \notag
&=  \norm{\beta_0}_{L^\infty(\Gamma_C)}+ cT\norm{\beta_0}_{L^\infty(\Gamma_C)} \norm{u^{n-1}}^2_{L^\infty_TL^\infty(\Omega)}\mathrm{e}^{cT \norm{u^{n-1}}^2_{L^\infty_TL^\infty(\Omega)}}  \\ \notag
&+ cT(1+ cT \norm{u^{n-1}}^2_{L^\infty_TL^\infty(\Omega)} \mathrm{e}^{cT \norm{u^{n-1}}^2_{L^\infty_TL^\infty(\Omega)}}).
\end{align*}
We define 
\begin{align}\label{eq:R}
R(n-1) := c (1&+\norm{\beta_0}_{L^\infty(\Gamma_C)} \norm{u^{n-1}}^2_{L^\infty_TL^\infty(\Omega)}\mathrm{e}^{cT \norm{u^{n-1}}^2_{L^\infty_TL^\infty(\Omega)}} \\
&+   \norm{u^{n-1}}^2_{L^\infty_TL^\infty(\Omega)} \mathrm{e}^{cT \norm{u^{n-1}}^2_{L^\infty_TL^\infty(\Omega)}}), \notag
\end{align}
for $n\geq 1$, so that
\begin{align}\label{eq:beta_est_not_squared}
\norm{\beta^{n-1}}_{L^\infty_TL^\infty(\Gamma_C)} &\leq  \norm{\beta_0}_{L^\infty(\Gamma_C)}+ T^\alpha R(n-1)
\end{align}
with $\alpha \geq 1$. Inserting \eqref{eq:beta_est_not_squared} into \eqref{eq:estwinfn1} yields
\begin{align*}
\norm{u^n}_{L^\infty_TL^\infty(\Omega)}&\leq c(1 +   \norm{f_0}_{L^\infty_TL^\infty(\Omega)} + \norm{f_N}_{L^\infty_TL^\infty(\Gamma_N)}) +\frac{c_{2\varphi} c_0}{\mu_\ast} \norm{u^{n-1}}_{L^\infty_TL^\infty(\Omega)} \\ 
&+\frac{c_{1\varphi} c_0}{\mu_\ast} (\norm{\beta_0}_{L^\infty(\Gamma_C)}  + T^\alpha R(n-1)) \\
&+\frac{c_{1\phi}c_0}{\mu_\ast}(\norm{\beta_0}_{L^\infty(\Gamma_C)}^2 + 2\norm{\beta_0}_{L^\infty(\Gamma_C)}T^\alpha R(n-1) +T^{2\alpha} R(n-1)^2) \norm{u^{n-1}}_{L^\infty_TL^\infty(\Omega)},
\end{align*}
where $\alpha \geq 1$. We observe that
\begin{align*}
R(n-1)^2 &= c(1 + \norm{\beta_0}_{L^\infty(\Gamma_C)}^2  \norm{u^{n-1}}^4_{L^\infty_TL^\infty(\Omega)}\mathrm{e}^{2cT \norm{u^{n-1}}^2_{L^\infty_TL^\infty(\Omega)}} \\
&+   \norm{u^{n-1}}^4_{L^\infty_TL^\infty(\Omega)} \mathrm{e}^{2cT \norm{u^{n-1}}^2_{L^\infty_TL^\infty(\Omega)}} \\
&+ 2 \norm{\beta_0}_{L^\infty(\Gamma_C)}  \norm{u^{n-1}}^2_{L^\infty_TL^\infty(\Omega)}\mathrm{e}^{cT \norm{u^{n-1}}^2_{L^\infty_TL^\infty(\Omega)}}\\
&+2\norm{u^{n-1}}^2_{L^\infty_TL^\infty(\Omega)} \mathrm{e}^{cT \norm{u^{n-1}}^2_{L^\infty_TL^\infty(\Omega)}}\\
&+2\norm{\beta_0}_{L^\infty(\Gamma_C)} \norm{u^{n-1}}^4_{L^\infty_TL^\infty(\Omega)}\mathrm{e}^{2cT \norm{u^{n-1}}^2_{L^\infty_TL^\infty(\Omega)}}).
\end{align*}
We simplify our notation by letting $\hat{c}=\hat{c}(\norm{(f_0,f_N,\beta_0,u^0)}_{L^\infty_T L^\infty(\Omega)\times L^\infty_T L^\infty(\Gamma_N)\times L^\infty(\Gamma_C)\times L^\infty_TL^\infty(\Omega)}) >0$ and
\begin{align*}
C_\ell(z) &:= \hat{c}\sum_{k=1}^\ell  z^k
\end{align*}
for $\ell\geq 1$. For each $n\in \N$, we define
\begin{align*}
\gamma_0 &:= T C_5(\norm{u^{0}}_{ L^\infty_T L^\infty(\Omega)})\\
\gamma_1 &:= T C_5(\norm{u^{1}}_{L^\infty_T L^\infty(\Omega)})\\
&\vdots\\
\gamma_{n-1} &:= T C_5(\norm{u^{n-1}}_{L^\infty_T L^\infty(\Omega)})
\end{align*}
for some $T>0$ small enough such that $\gamma_{n-1} < 1$, where $u^n$ will be defined inductively. We will now verify that we have a convergent sequence where $\gamma_{n-1}$ does not shrink to zero. Indeed, gathering the estimates and simplified notation above, the inequality is reduced to
\begin{align*}
\norm{u^n}_{L^\infty_TL^\infty(\Omega)} &\leq c(1 +   \norm{f_0}_{L^\infty_TL^\infty(\Omega)} + \norm{f_N}_{L^\infty_TL^\infty(\Gamma_N)} +\norm{\beta_0}_{L^\infty(\Gamma_C)}) \\
&+ \frac{c_{2\varphi} c_0+c_{1\phi}c_0\norm{\beta_0}_{L^\infty(\Gamma_C)}^2}{\mu_\ast}\norm{u^{n-1}}_{L^\infty_TL^\infty(\Omega)} + \gamma_{n-1}\\
&= \hat{c} + \frac{c_{2\varphi} c_0+c_{1\phi}c_0 \norm{\beta_0}_{L^\infty(\Gamma_C)}^2}{\mu_\ast}\norm{u^{n-1}}_{L^\infty_TL^\infty(\Omega)} + \gamma_{n-1}.
\end{align*}
We will soon iterate over $n\in\N$, but first, we will simplify our notation a bit more
\begin{align*}
\delta &:=\frac{c_{2\varphi} c_0+c_{1\phi}c_0\norm{\beta_0}_{L^\infty(\Gamma_C)}^2}{\mu_\ast},
\end{align*}
where $\delta <1$ as a consequence of the smallness-condition \eqref{eq:smallness}.
Consequently, we obtain
\begin{align}\label{eq:beforeiteration}
\norm{u^n}_{L^\infty_TL^\infty(\Omega)} &\leq \hat{c} +\delta\norm{u^{n-1}}_{L^\infty_TL^\infty(\Omega)} + \gamma_{n-1}.
\end{align}
Iterating over $n\in\N$, we observe that $\gamma_{n-1}$ only depends on the initial data, i.e.,
\begin{align*}
\norm{u^1}_{L^\infty_TL^\infty(\Omega)} &\leq \hat{c} +\delta\norm{u^{0}}_{L^\infty_TL^\infty(\Omega)} + \gamma_{0},\\
\norm{u^2}_{L^\infty_TL^\infty(\Omega)} &\leq \hat{c} +\delta\norm{u^{1}}_{L^\infty_TL^\infty(\Omega)} + \gamma_{1}\leq \hat{c}(1+\delta) +\delta^2\norm{u^0}_{L^\infty_TL^\infty(\Omega)} + \delta\gamma_0 + \gamma_{1},
\\
&\vdots
\end{align*}
where for each $n$, we may write $\gamma_{n-1}$ by the previous step. We observe that $\gamma_{n-1}$ is decreasing. Indeed, a new term is added at each step, but it is kept small by the smallness-condition and by restricting the time $T$, resulting in a geometric series. Thus, continuing while restricting $\gamma_{n-1}<1$ for each $n\geq1$ implies that $\gamma_0 \geq \gamma_1 \geq \dots \geq \gamma_{n-2} \geq \gamma_{n-1}$. Iterating over $n$ in \eqref{eq:beforeiteration} therefore yields
\begin{align*}
\norm{u^n}_{L^\infty_TL^\infty(\Omega)} \leq \hat{c}\sum_{k=1}^{n-1}\delta^k +\delta^n\norm{u^0}_{L^\infty_TL^\infty(\Omega)} +\gamma_0\sum_{k=1}^{n-1}\delta^k
&\leq( \hat{c}+\gamma_0)\sum_{k=1}^{\infty}\delta^k.
\end{align*}
As a result
\begin{align}\label{eq:ThisONE}
\norm{u^n}_{L^\infty_TL^\infty(\Omega)} \leq c(\norm{(f_0,f_N,\beta_0,u^0)}_{L^\infty_TL^\infty(\Omega) \times L^\infty_TL^\infty(\Gamma_N)\times L^\infty(\Gamma_C)  \times L^\infty_T ( H^1(\Omega) \cap L^\infty(\Omega)) })
\end{align}
for $\lim_{n\rightarrow\infty} \gamma_{n-1}=: \gamma>0$. By extension, it implies that there is a positive time $T$ independent of $n$ such that $u^n$ is uniformly bounded in $L^\infty_TL^\infty(\Omega)$. To conclude, it only remains to show that $u^n$ is uniformly bounded in $L^\infty_T H^1_0(\Omega)$. In particular, combining the estimates \eqref{eq:est_u_n_V} and \eqref{eq:beta_est_not_squared} implies
\begin{align*}
\norm{u^n}_{L^\infty_T H^1(\Omega)}     &\leq c(1 +   \norm{f_0}_{L^\infty_TL^\infty(\Omega)} + \norm{f_N}_{L^\infty_TL^\infty(\Gamma_N)}) +\frac{c_{2\varphi} c_0}{\mu_\ast} \norm{u^{n-1}}_{L^\infty_T H^1(\Omega)} \\ 
&+\frac{c_{1\varphi} c_0}{\mu_\ast} (\norm{\beta_0}_{L^\infty(\Gamma_C)}  + T^\alpha R(n-1)) \\
&+\frac{c_{1\phi}c_0}{\mu_\ast}(\norm{\beta_0}_{L^\infty(\Gamma_C)}^2 + 2\norm{\beta_0}_{L^\infty(\Gamma_C)}T^\alpha R(n-1) +T^{2\alpha} R(n-1)^2) \norm{u^{n-1}}_{L^\infty_T H^1(\Omega)}
\end{align*}
for $\alpha \geq 1$, where $R(n-1)$ is defined in \eqref{eq:R}. We observe that $R(n-1)$ only depends on $\norm{u^{n-1}}_{L^\infty_T L^\infty(\Omega)}$ and the given data. We therefore utilize \eqref{eq:ThisONE} to obtain
\begin{align*}
R (n-1) \leq c(\norm{(f_0,f_N,\beta_0,u^0)}_{L^\infty_TL^\infty(\Omega) \times L^\infty_TL^\infty(\Gamma_N)\times L^\infty(\Gamma_C)  \times L^\infty_T ( H^1(\Omega) \cap L^\infty(\Omega)) }).
\end{align*}
Consequently, there is a $c>0$ depending on the given data such that
\begin{align*}
\norm{u^n}_{L^\infty_T H^1(\Omega)}     &\leq c
+  (\frac{c_{2\varphi}c_0 +c_{1\phi} c_0\norm{\beta_0}_{L^\infty(\Gamma_C)}^2 }{\mu_\ast}  + c T^\alpha)  \norm{u^{n-1}}_{L^\infty_T H^1(\Omega)} 
\end{align*}
for $\alpha \geq 1$. From the smallness assumption \eqref{eq:smallness}, we choose $T>0$ so that
\begin{align}\label{eq:T_less_than_1}
\frac{c_{2\varphi}c_0 +c_{1\phi} c_0\norm{\beta_0}_{L^\infty(\Gamma_C)}^2 }{\mu_\ast} + cT^\alpha <1 
\end{align}
for $\alpha \geq 1$ and $c=c(\norm{(f_0,f_N,\beta_0,u^0)}_{L^\infty_TL^\infty(\Omega) \times L^\infty_TL^\infty(\Gamma_N)\times L^\infty(\Gamma_C)  \times L^\infty_T ( H^1(\Omega) \cap L^\infty(\Omega)) })$. Utilizing Lemma \ref{lemma:estn} gives us the desired estimate.
\end{proof}
With the estimate \eqref{eq:estVn1} in Proposition \ref{prop:n}, we may follow the same procedure as for $n=1$ (Lemma \ref{lemma:lambda_fixedpoint}) to conclude that $\{(u^n,\beta^n)\}_{n\geq 1} \subset L^\infty_T ( H^1_0(\Omega) \cap L^\infty(\Omega)) \times C([0,T];L^2(\Gamma_C)) \cap L^\infty_T L^\infty(\Gamma_C)$ indeed is the approximated solution to \eqref{eq:u_n}-\eqref{eq:beta_eq}. In addition, we observe that \eqref{eq:est_beta_useful} only depends on $\norm{u^n}_{L^\infty_T L^\infty(\Omega)}$ and the initial data, so we may use the estimate in Proposition \ref{prop:n} to conclude that
\begin{align}\label{eq:est_beta_N}
\norm{\beta^n}_{L^\infty_T L^\infty(\Gamma_C)} &\leq  c(\norm{(f_0,f_N,\beta_0,u^0)}_{L^\infty_TL^\infty(\Omega) \times L^\infty_TL^\infty(\Gamma_N)\times L^\infty(\Gamma_C)  \times L^\infty_T ( H^1(\Omega) \cap L^\infty(\Omega)) })
\end{align}
for some $T>0$ independent of $n$.
\subsection*{Step 5.2 \textit{(Convergence of the approximated solutions)}.}
We will next show that $\{(u^n,\beta^n)\}_{n\geq 1}$ is a Cauchy sequence in $L^\infty_T  H^1_0(\Omega) \times C([0,T];L^2(\Gamma_C))$. This is summarized in the proposition below.
\begin{prop}\label{prop:cauchy}
Let $\beta^0=\beta_0 \in L^\infty(\Gamma_C)$ and $u^0 \in L^\infty_T ( H^1_0(\Omega) \cap L^\infty(\Omega))$. Under the assumptions \hyperref[assumptionvarphi]{$(H1)$}-\hyperref[assumptionH]{$(H3)$} and \eqref{eq:assumptions_mu}-\eqref{eq:assumptionondata}, let $\{(u^n,\beta^n)\}_{n\geq 1} \subset L^\infty_T ( H^1_0(\Omega) \cap L^\infty(\Omega)) \times C([0,T];L^2(\Gamma_C))\cap L^\infty_TL^\infty(\Gamma_C)$ be the solution to \eqref{eq:u_n}-\eqref{eq:beta_eq}. Then, $\{(u^n,\beta^n)\}_{n\geq 1}$ is a Cauchy sequence in $L^\infty_T  H^1_0(\Omega) \times C([0,T];L^2(\Gamma_C))$.
\end{prop}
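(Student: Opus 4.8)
The plan is to control the successive differences $u^n-u^{n-1}$ and $\beta^n-\beta^{n-1}$ and to show that they obey a contractive recursion, so that $\sum_n \|u^n-u^{n-1}\|$ and $\sum_n\|\beta^n-\beta^{n-1}\|$ converge geometrically. First I would subtract the variational inequalities \eqref{eq:u_n} at levels $n$ and $n-1$: take $v=u^{n-1}(t)$ in the inequality for $u^n(t)$ and $v=u^n(t)$ in the inequality for $u^{n-1}(t)$, then add the two. The bilinear terms combine into $-\int_\Omega \mu\,|\nabla(u^n(t)-u^{n-1}(t))|^2\d x$ and the load terms cancel, leaving
\[
\mu_\ast\norm{\nabla(u^n(t)-u^{n-1}(t))}_{L^2(\Omega)}^2 \leq \int_{\Gamma_C}\big[\Phi_\varphi + \Phi_\phi\big]\d\sigma,
\]
where $\Phi_\varphi$ is the four-point combination $\varphi(\beta^{n-1},u^{n-1},u^{n-1})-\varphi(\beta^{n-1},u^{n-1},u^n)+\varphi(\beta^{n-2},u^{n-2},u^n)-\varphi(\beta^{n-2},u^{n-2},u^{n-1})$, which is exactly of the shape bounded in \hyperref[assumptionvarphi]{$(H1)$}$(ii)$ with $(y_1,r_1)=(\beta^{n-1},u^{n-1})$, $(y_2,r_2)=(\beta^{n-2},u^{n-2})$, $v_1=u^n$, $v_2=u^{n-1}$, so $\Phi_\varphi\leq c_{1\varphi}|\beta^{n-1}-\beta^{n-2}||u^n-u^{n-1}|+c_{2\varphi}|u^{n-1}-u^{n-2}||u^n-u^{n-1}|$, and $\Phi_\phi$ is the analogous combination controlled by \hyperref[assumptiophi]{$(H2)$}$(ii)$, whose three terms carry factors $|\beta^{n-1}|^2$, $|\beta^{n-2}||u^{n-2}|$, $|\beta^{n-1}||u^{n-2}|$ that are all majorized by the uniform bounds of Proposition \ref{prop:n} and \eqref{eq:est_beta_N}. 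Crucially, only the $c_{1\phi}$ term multiplies $|u^{n-1}-u^{n-2}||u^n-u^{n-1}|$; the $c_{2\phi},c_{3\phi}$ terms multiply $|\beta^{n-1}-\beta^{n-2}||u^n-u^{n-1}|$.

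After H\"older's inequality on $\Gamma_C$, Proposition \ref{prop:trace} applied to $u^n-u^{n-1}$ (choosing $\epsilon$ small so the resulting $\norm{\nabla(u^n-u^{n-1})}_{L^2(\Omega)}^2$ is absorbed into the left-hand side), and the Poincar\'e inequality, this step yields
\[
\norm{u^n-u^{n-1}}_{L^\infty_T H^1(\Omega)} \leq \Big(\tfrac{c_{2\varphi}c_0+c_{1\phi}c_0\norm{\beta^{n-1}}_{L^\infty_TL^\infty(\Gamma_C)}^2}{\mu_\ast}+cT^{\alpha}\Big)\norm{u^{n-1}-u^{n-2}}_{L^\infty_T H^1(\Omega)} + c\,\norm{\beta^{n-1}-\beta^{n-2}}_{L^\infty_T L^2(\Gamma_C)},
\]
and by \eqref{eq:beta_est_not_squared} we have $\norm{\beta^{n-1}}_{L^\infty_TL^\infty(\Gamma_C)}^2=\norm{\beta_0}_{L^\infty(\Gamma_C)}^2+O(T^{\alpha})$, so the bracket equals $\delta+O(T^{\alpha})$ with $\delta=\tfrac{c_{2\varphi}c_0+c_{1\phi}c_0\norm{\beta_0}_{L^\infty(\Gamma_C)}^2}{\mu_\ast}<1$ by the smallness assumption \eqref{eq:smallness}.

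Next I would subtract the ODE \eqref{eq:beta_eq} at levels $n$ and $n-1$ and estimate $H(\beta^n(s),u^n(s))-H(\beta^{n-1}(s),u^{n-1}(s))$ by \hyperref[assumptionH]{$(H3)$}$(ii)$, bounding the coefficients $|\beta^n||u^n|$, $|\beta^n||u^{n-1}|$, $|u^{n-1}|^2$ by the uniform estimates and using Proposition \ref{prop:traceinfty} to replace $\norm{u^k(s)}_{L^\infty(\Gamma_C)}$ by $\norm{u^k(s)}_{L^\infty(\Omega)}$; together with $\norm{u^n-u^{n-1}}_{L^2(\Gamma_C)}\leq c\norm{u^n-u^{n-1}}_{H^1(\Omega)}$ from Proposition \ref{prop:trace} this gives
\[
\norm{\beta^n(t)-\beta^{n-1}(t)}_{L^2(\Gamma_C)} \leq cT\norm{u^n-u^{n-1}}_{L^\infty_T H^1(\Omega)} + c\int_0^t\norm{\beta^n(s)-\beta^{n-1}(s)}_{L^2(\Gamma_C)}\d s,
\]
and Gr\"onwall's inequality (as in \cite[Appendix B.2]{Evans2010}) yields $\norm{\beta^n-\beta^{n-1}}_{L^\infty_T L^2(\Gamma_C)}\leq c(T)\,\norm{u^n-u^{n-1}}_{L^\infty_T H^1(\Omega)}$ with $c(T)\to 0$ as $T\to 0$. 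Substituting the level-$(n-1)$ version of this bound into the $u$-estimate above produces $\norm{u^n-u^{n-1}}_{L^\infty_T H^1(\Omega)}\leq L\,\norm{u^{n-1}-u^{n-2}}_{L^\infty_T H^1(\Omega)}$ with $L=\delta+O(T^{\alpha})+c\,c(T)$, which is strictly less than $1$ after possibly shrinking $T$ once more, keeping it compatible with the choices made in Step 5. Hence $\{u^n\}_{n\geq1}$ is Cauchy in $L^\infty_T H^1_0(\Omega)$, and the $\beta$-estimate then makes $\{\beta^n\}_{n\geq1}$ Cauchy in $L^\infty_T L^2(\Gamma_C)$; since each $\beta^n$ is continuous and the $C([0,T];L^2(\Gamma_C))$-norm coincides with $\norm{\cdot}_{L^\infty_TL^2(\Gamma_C)}$, it is Cauchy in $C([0,T];L^2(\Gamma_C))$ as well. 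I expect the main obstacle to be the constant bookkeeping in the first step: correctly sorting which products of uniformly bounded iterates multiply $|u^{n-1}-u^{n-2}|$ versus $|\beta^{n-1}-\beta^{n-2}|$ in $\Phi_\phi$, and verifying that after the trace/Poincar\'e absorption the coefficient of $\norm{u^{n-1}-u^{n-2}}_{L^\infty_T H^1(\Omega)}$ is precisely $\delta$ up to $O(T^{\alpha})$, so that the contraction closes under \eqref{eq:smallness}.
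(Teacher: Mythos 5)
Your proposal follows essentially the same route as the paper's proof: subtract the two variational inequalities with the cross-test functions $v=u^{n-1}$ and $v=u^n$, use \hyperref[assumptionvarphi]{$(H1)$}$(ii)$ and \hyperref[assumptiophi]{$(H2)$}$(ii)$ on the resulting four-point combinations together with the uniform bounds of Proposition \ref{prop:n} and \eqref{eq:est_beta_N}, then subtract the ODEs, apply \hyperref[assumptionH]{$(H3)$}$(ii)$ with Gr\"onwall to obtain $\norm{\beta^n-\beta^{n-1}}_{L^\infty_TL^2(\Gamma_C)}\lesssim T\norm{u^n-u^{n-1}}_{L^\infty_TH^1(\Omega)}$, substitute back, and iterate under the smallness condition \eqref{eq:smallness}. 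One small remark: the parenthetical ``choosing $\epsilon$ small so the $\norm{\nabla(u^n-u^{n-1})}_{L^2(\Omega)}^2$ is absorbed'' is not needed and slightly misdescribes the structure -- after Cauchy--Schwarz the right-hand side is only \emph{linear} in $\norm{u^n-u^{n-1}}_{H^1(\Omega)}$, so the paper simply takes $\epsilon=1$ in Proposition \ref{prop:trace} and divides by that norm, which is both cleaner and gives the sharp contraction coefficient directly.
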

\begin{proof}
Let $e_u^n = u^n-u^{n-1}$ and $e_\beta^n = \beta^n - \beta^{n-1}$. To begin with, we add the inequality \eqref{eq:u_n} at level $n$ with $v=u^{n-1}(t)$ and at level $n-1$ with $v=u^n(t)$, that is
\begin{align*}
\int_\Omega &\mu |\nabla e_u^n(t) |^2 \d x\\
&\leq  \int_{\Gamma_C} [\varphi(\beta^{n-1}(t),  u^{n-1}(t),  u^{n-1}(t))-\varphi(\beta^{n-1}(t), u^{n-1}(t), u^n(t))]\d \sigma \\
&+ \int_{\Gamma_C} [\varphi(\beta^{n-2}(t),  u^{n-2}(t),  u^{n}(t))-\varphi(\beta^{n-2}(t), u^{n-2}(t), u^{n-1}(t))] \d \sigma \\
&+\int_{\Gamma_C} [\phi(\beta^{n-1}(t),  u^{n-1}(t),  u^{n-1}(t))-\phi(\beta^{n-1}(t), u^{n-1}(t), u^n(t))]\d \sigma \\
&+ \int_{\Gamma_C} [\phi(\beta^{n-2}(t),  u^{n-2}(t),  u^{n}(t))-\phi(\beta^{n-2}(t), u^{n-2}(t), u^{n-1}(t))] \d \sigma 
\end{align*}
for a.e. $t\in(0,T)$. Applying the lower bound of $\mu$ \eqref{eq:assumptionmu}, \hyperref[assumptionvarphi]{$(H1)$}$(ii)$, and \hyperref[assumptiophi]{$(H2)$}$(ii)$ implies
\begin{align*}
\mu_\ast  \int_\Omega |\nabla e_u^n(t)|^2 \d x 
&\leq c_{1\varphi}\int_{\Gamma_C}  |e_\beta^{n-1}(t)||e_u^n(t)| \d \sigma + c_{2\varphi}\int_{\Gamma_C} | e_u^{n-1}(t)| |e_u^n(t)| \d \sigma\\
&+ c_{1\phi}\int_{\Gamma_C} |\beta^{n-1}(t)|^2 | e_u^{n-1}(t)| |e_u^n(t)| \d \sigma\\
&+ c_{2\phi}\int_{\Gamma_C} |\beta^{n-2}(t)||u^{n-2}(t)| | e_\beta^{n-1}(t)| |e_u^n(t)| \d \sigma\\
&+ c_{3\phi}\int_{\Gamma_C} |\beta^{n-1}(t)||u^{n-2}(t)| | e_\beta^{n-1}(t)| |e_u^n(t)| \d \sigma
\end{align*}
for a.e. $t\in(0,T)$.	Consequently, by the Cauchy-Schwarz inequality and Proposition \ref{prop:trace} (with $\epsilon=1$) and \ref{prop:traceinfty}, and the Poincar\'{e} inequality
\begin{align*}
\norm{e_u^n}_{L^\infty_T H^1(\Omega)}^2 &\leq   \frac{c_{1\varphi}c_0}{\mu_\ast} \norm{e_\beta^{n-1}}_{L^\infty_TL^2(\Gamma_C)}\norm{e_u^n}_{L^\infty_T H^1(\Omega)}+  \frac{c_{2\varphi}c_0}{\mu_\ast}\norm{e_u^{n-1}}_{L^\infty_T H^1(\Omega)}\norm{e_u^n}_{L^\infty_T H^1(\Omega)}\\ \notag
&+\frac{c_{1\phi}c_0}{\mu_\ast}\norm{\beta^{n-1}}_{L^\infty_TL^\infty(\Gamma_C)}^2\norm{e_u^{n-1}}_{L^\infty_T H^1(\Omega)}\norm{e_u^n}_{L^\infty_T H^1(\Omega)} \\ \notag
&+\frac{c_{2\phi}c_0}{\mu_\ast}\norm{\beta^{n-1}}_{L^\infty_TL^\infty(\Gamma_C)}\norm{u^{n-2}}_{L^\infty_TL^\infty(\Omega)}\norm{e_\beta^{n-1}}_{L^\infty_TL^2(\Gamma_C)}\norm{e_u^n}_{L^\infty_T H^1(\Omega)} \\
&+\frac{c_{3\phi}c_0}{\mu_\ast}\norm{\beta^{n-2}}_{L^\infty_TL^\infty(\Gamma_C)}\norm{u^{n-2}}_{L^\infty_TL^\infty(\Omega)}\norm{e_\beta^{n-1}}_{L^\infty_TL^2(\Gamma_C)} \norm{e_u^n}_{L^\infty_T H^1(\Omega)},\notag
\end{align*}
since $\mu_\ast >0$. From the estimate \eqref{eq:beta_est_not_squared} (replacing $n-1$ with $n$) and \eqref{eq:est_beta_N}, we obtain
\begin{align}\label{eq:estused}
&\hspace{-0.1cm}\norm{e_u^n}_{L^\infty_T H^1(\Omega)}\\ \notag
&\leq   \frac{c_{1\varphi}c_0}{\mu_\ast} \norm{e_\beta^{n-1}}_{L^\infty_TL^2(\Gamma_C)}+  \frac{c_{2\varphi}c_0}{\mu_\ast}\norm{e_u^{n-1}}_{L^\infty_T H^1(\Omega)}\\ \notag
&+\frac{c_{1\phi}c_0}{\mu_\ast}(\norm{\beta_0}_{L^\infty(\Gamma_C)}^2 + 2\norm{\beta_0}_{L^\infty(\Gamma_C)} T^{\alpha} R(n-1)+T^{2\alpha} R(n-1)^2)\norm{e_u^{n-1}}_{L^\infty_T H^1(\Omega)} \\ \notag
&+c(\norm{\beta^{n-1}}_{L^\infty_T L^\infty(\Gamma_C)}+\norm{\beta^{n-2}}_{L^\infty_T L^\infty(\Gamma_C)})\norm{u^{n-2}}_{L^\infty_TL^\infty(\Omega)}\norm{e_\beta^{n-1}}_{L^\infty_TL^2(\Gamma_C)} \notag
\end{align}
for $\alpha \geq 1$ and $R$ defined in \eqref{eq:R}. Next, we subtract \eqref{eq:beta_eq} for the levels $n$ and $n-1$, and then apply \hyperref[assumptionH]{$(H3)$}$(ii)$ to obtain
\begin{align*}
|e_\beta^n(t)|
&\leq \int_0^t ( c_{1 \beta}|\beta^n(s)||u^n(s)|+c_{2 \beta}|\beta^n(s)||u^{n-1}(s)|)|e^n_u(s)| \d s  \\
&+\int_0^t   c_{3\beta} |u^{n-1}(s)|^2|e^n_\beta(s)| \d s
\end{align*}
for a.e. $t\in(0,T)$. By H\"{o}lder's inequality and Proposition \ref{prop:trace} (for $\epsilon=1$) and \ref{prop:traceinfty}, we have
\begin{align*}
\norm{e_\beta^n(t)}_{L^2(\Gamma_C)} &\leq \int_0^t(c_{1 \beta}c_0 \norm{\beta^n(s)}_{L^\infty(\Gamma_C)} + c_{2 \beta}c_0 \norm{\beta^n(s)}_{L^\infty(\Gamma_C)} \norm{u^{n-1}(s)}_{L^\infty(\Omega)})\norm{e_u^n(s)}_{H^1(\Omega)} \d s \\
&+ \int_0^t c_{3\beta}c_0 \norm{u^{n-1}(s)}_{L^\infty(\Omega)}^2 \norm{e_\beta^n(s)}_{L^2(\Gamma_C)} \d s
\end{align*}
for a.e. $t\in(0,T)$. Again, applying H\"{o}lder's inequality reads 
\begin{align*}
\norm{e_\beta^n(t)}_{L^2(\Gamma_C)}
&\leq cT \norm{\beta^n}_{L^\infty_T L^\infty(\Gamma_C)}(1 +\norm{u^{n-1}}_{L^\infty_TL^\infty(\Omega)})\norm{e_u^n}_{L^\infty_T H^1(\Omega)} \\
& + c_{3\beta} \norm{u^{n-1}}_{L^\infty_TL^\infty(\Omega)}^2\int_0^t\norm{e_\beta^n(s)}_{L^2(\Gamma_C)} \d s
\end{align*}
for a.e. $t\in(0,T)$. Utilizing a standard Gr\"{o}nwall's argument  gives us
\begin{align*}
\norm{e_\beta^n(t)}_{L^2(\Gamma_C)} 
\leq& cT\norm{\beta^{n}}_{L^\infty_T L^\infty(\Gamma_C)}(1 +\norm{u^{n-1}}_{L^\infty_TL^\infty(\Omega)})\\
&\times(1+ cT \norm{u^{n-1}}_{L^\infty_TL^\infty(\Omega)}^2\mathrm{e}^{cT\norm{u^{n-1}}_{L^\infty_TL^\infty(\Omega)}^2})\norm{e_u^n}_{L^\infty_T H^1(\Omega)} 
\end{align*}
for a.e. $t\in(0,T)$. Choosing $T >0$ as in \eqref{eq:T} yields
\begin{align}\label{eq:est_beta_n}
\norm{e_\beta^n}_{L^\infty_TL^2(\Gamma_C)} 
&\leq cT\norm{\beta^{n}}_{L^\infty_T L^\infty(\Gamma_C)}(1 +\norm{u^{n-1}}_{L^\infty_TL^\infty(\Omega)})\norm{e_u^n}_{L^\infty_T H^1(\Omega)} .
\end{align}
Inserting \eqref{eq:est_beta_n} at level $n-1$  into \eqref{eq:estused} while keeping in mind Proposition \ref{prop:estn} gives us
\begin{align*}
&\norm{e_u^n}_{L^\infty_T H^1(\Omega)}  \leq \frac{c_{2\varphi}c_0+c_{1\phi}c_0\norm{\beta_0}_{L^\infty(\Gamma_C)}^2}{\mu_\ast}\norm{e_u^{n-1}}_{L^\infty_T H^1(\Omega)} +cT^\alpha  \norm{e_u^{n-1}}_{L^\infty_T H^1(\Omega)} 
\end{align*}
for $\alpha \geq 1$ and $c=c(\norm{(f_0,f_N,\beta_0,u^0)}_{L^\infty_TL^\infty(\Omega) \times L^\infty_TL^\infty(\Gamma_N)\times L^\infty(\Gamma_C)  \times L^\infty_T (H^1(\Omega) \cap L^\infty(\Omega)) })$. For $\alpha \geq 1$, we define
\begin{align*}
B&:=  \frac{c_{2\varphi}c_0 + c_{1\phi}c_0\norm{\beta_0}_{L^\infty(\Gamma_C)}^2 }{\mu_\ast} \\
&+c(\norm{(f_0,f_N,\beta_0,u^0)}_{L^\infty_TL^\infty(\Omega) \times L^\infty_TL^\infty(\Gamma_N)\times L^\infty(\Gamma_C)  \times L^\infty_T (H^1(\Omega) \cap L^\infty(\Omega)) })T^\alpha.
\end{align*}
From the smallness-condition \eqref{eq:smallness} and choosing $T>0$ in the same way as in \eqref{eq:T_less_than_1}, we have that $B<1$. Iterating over $n \in \N$ yields
\begin{align}\label{eq:eun}
\norm{e_u^n}_{L^\infty_T H^1(\Omega)} \leq cB^n .
\end{align}
This results in $\lim_{n\rightarrow \infty}cB^{n} = 0 $. Hence, passing the limit $n\rightarrow \infty$ in \eqref{eq:eun} implies that $\{u^n\}_{n\geq 1} \subset L^\infty_T H^1_0(\Omega)$ is a Cauchy sequence. As a consequence, it follows by \eqref{eq:est_beta_N}, \eqref{eq:est_beta_n}, and Proposition \ref{prop:n} that $\{\beta^n\}_{n\geq1} \subset C([0,T];L^2(\Gamma_C))$ also is a Cauchy sequence. 
\end{proof}

\subsection*{Step 5.3 \textit{(Passing to the limit in \eqref{eq:u_n}-\eqref{eq:beta_eq})}.}
From Proposition \ref{prop:cauchy}, it follows that as $n\rightarrow \infty$
\begin{align*}
u^n &\rightarrow u   \text{ strongly in } L^\infty_T H^1_0(\Omega) &\text{and}&	&\beta^n &\rightarrow \beta   \text{ strongly in } C([0,T],L^2(\Gamma_C)).
\end{align*}
Consequently, by Lemma \ref{lemma:strongtoae} we have, up to a subsequence,
\begin{subequations}\label{eq:u_beta_convererge}
\begin{align}\label{eq:u_convererge}
u^n(t) &\rightarrow u(t)   \text{ strongly in } H^1_0(\Omega) &\text{ for a.e. } t\in (0,T)\\
\beta^n(t) &\rightarrow \beta(t)  \text{ strongly in } L^2(\Gamma_C)&  \text{ for a.e. } t\in (0,T)
\label{eq:beta_convererge}
\end{align}
\end{subequations}
as $n\rightarrow \infty$. It follows that
\begin{subequations}\label{eq:u_beta_boundsss}
\begin{align}\label{eq:u_beta_bounded_VL2}
\norm{u}_{L^\infty_T H^1(\Omega)} &\leq c &\text{and}& &\norm{\beta}_{L^\infty_TL^2(\Gamma_C)} &\leq c.	
\end{align}
As a consequence of the Banach-Alaoglu theorem (see, e.g., \cite[Corollary 3.30]{Brezis2011}), we have by Proposition \ref{prop:n} and \eqref{eq:est_beta_N} that 
\begin{align}\label{eq:u_beta_bounded}
\norm{u}_{L^\infty_TL^\infty(\Omega)} &\leq c &\text{and}& &\norm{\beta}_{L^\infty_TL^\infty(\Gamma_C)} &\leq c.	
\end{align}
\end{subequations}
We are now in a position to pass the limit in \eqref{eq:u_n}-\eqref{eq:beta_eq}). From \eqref{eq:u_convererge} and \eqref{eq:assumptionmu}, we directly have as $n\rightarrow \infty$ that
\begin{align*}
\int_\Omega \mu \nabla u^n (t) \cdot \nabla (v-u^n(t)) \d x  \rightarrow \int_\Omega \mu \nabla u(t) \cdot \nabla (v-u(t)) \d x 
\end{align*}
for all $v\in H^1_0(\Omega)$ and a.e. $t\in (0,T)$. Next, we have by \hyperref[assumptionvarphi]{$(H1)$}$(ii)$ and Lemma \ref{lemma:calculation} that
\begin{align*}
&\varphi(\beta^n(t), u^n(t),  v)-\varphi(\beta^n(t), u^n(t), u^n(t)) - \varphi(\beta(t), u(t),  v) +\varphi(\beta(t), u(t), u(t))\\
&= 	\varphi(\beta^n(t), u^n(t),  v)-\varphi(\beta^n(t), u^n(t), u^n(t)) + \varphi(\beta(t), u(t), u^n(t)) - \varphi(\beta(t), u(t),  v) \\
&+\varphi(\beta(t), u(t), u(t)) -\varphi(\beta(t), u(t), u^n(t)) \\
&\leq  c_{1\varphi} |\beta^n(t)-\beta(t)| |u^n(t)-v| +c_{2\varphi}|u^n(t)-u(t)|  |u^n(t)-v| \\
&+ c_{0\varphi} |u^n(t)-u(t)| + c_{1\varphi} |\beta(t)| |u^n(t)-u(t)| +c_{2\varphi} |u(t)|  |u^n(t)-u(t)|
\end{align*}
for all $v\in H^1_0(\Omega)$, $n\in\N$. Consequently, H\"{o}lder's inequality and Proposition \ref{prop:trace} with $\epsilon=1$ implies
\begin{align*}
&\int_{\Gamma_C} [\varphi(\beta^n(t), u^n(t),  v)-\varphi(\beta^n(t), u^n(t), u^n(t))] \d \sigma - \int_{\Gamma_C}[\varphi(\beta(t), u(t),  v)-\varphi(\beta(t), u(t), u(t))] \d \sigma\\
& \leq  c_{1\varphi}c_0 \norm{\beta^n(t)-\beta(t)}_{L^2(\Gamma_C)} \norm{u^n(t)-v}_{H^1(\Omega)} +c_{2\varphi}c_0\norm{u^n(t)-u(t)}_{H^1(\Omega)}  \norm{u^n(t)-v}_{H^1(\Omega)} \\
&+ c_{0\varphi}c_0 \norm{u^n(t)-u(t)}_{H^1(\Omega)} + c_{1\varphi}c_0 \norm{\beta}_{L^\infty_TL^2(\Gamma_C)} \norm{u^n(t)-u(t)}_{H^1(\Omega)} \\
&+c_{2\varphi}c_0 \norm{u}_{L^\infty_T H^1(\Omega)}\norm{u^n(t)-u(t)}_{H^1(\Omega)}
\end{align*}
for all $v\in H^1_0(\Omega)$, $n\in\N$, and a.e. $t\in (0,T)$. We may conclude by \eqref{eq:u_convererge}-\eqref{eq:beta_convererge}, Proposition \ref{prop:n}, and \eqref{eq:u_beta_bounded_VL2}.
\\
\indent Similarly, from \hyperref[assumptiophi]{$(H2)$}$(ii)$ and Lemma \ref{lemma:calculation}, we have that
\begin{align*}
&\phi(\beta^n(t), u^n(t),  v)-\phi(\beta^n(t), u^n(t), u^n(t)) - \phi(\beta(t), u(t),  v) +\phi(\beta(t), u(t), u(t))\\
&= 	\phi(\beta^n(t), u^n(t),  v)-\phi(\beta^n(t), u^n(t), u^n(t)) + \phi(\beta(t), u(t), u^n(t)) - \phi(\beta(t), u(t),  v) \\
&+\phi(\beta(t), u(t), u(t)) -\phi(\beta(t), u(t), u^n(t)) \\
&\leq  c_{1\phi} |\beta^n(t)|^2 |u^n(t)-u(t)| |u^n(t)-v| +c_{2\phi}|\beta(t)||u(t)||\beta^n(t)-\beta(t)|  |u^n(t)-v| \\
&+ c_{3\phi}|\beta^n(t)||u(t)||\beta^n(t)-\beta(t)| |u^n(t)-v| + c_{1\phi} |\beta(t)|^2|u(t)||u^n(t)-u(t)|
\end{align*}
for all $v\in H^1_0(\Omega)$, $n\in\N$. Consequently, H\"{o}lder's inequality and Proposition \ref{prop:trace} (with $\epsilon=1$) and \ref{prop:traceinfty} implies
\begin{align*}
&\int_{\Gamma_C} [\phi(\beta^n(t), u^n(t),  v)-\phi(\beta^n(t), u^n(t), u^n(t))] \d \sigma- \int_{\Gamma_C}[\phi(\beta(t), u(t),  v)-\phi(\beta(t), u(t), u(t))] \d \sigma\\
& \leq  c_{1\phi}c_0  \norm{\beta^n}_{L^\infty_TL^\infty(\Gamma_C)}^2\norm{u^n(t)-u(t)}_{H^1(\Omega)}  \norm{u^n(t)-v}_{H^1(\Omega)} \\
&+
c_{2\phi}c_0 \norm{\beta}_{L^\infty_TL^\infty(\Gamma_C)}\norm{u}_{L^\infty_TL^\infty(\Omega)}\norm{\beta^n(t)-\beta(t)}_{L^2(\Gamma_C)} \norm{u^n(t)-v}_{H^1(\Omega)} \\
&+c_{3\phi}c_0\norm{\beta^n}_{L^\infty_TL^\infty(\Gamma_C)}\norm{u}_{L^\infty_TL^\infty(\Omega)}\norm{\beta^n(t)-\beta(t)}_{L^2(\Gamma_C)}\norm{u^n(t)-v}_{H^1(\Omega)} \\
&+ c_{1\phi}c_0\norm{\beta}_{L^\infty_TL^\infty(\Gamma_C)}\norm{u}_{L^\infty_TL^\infty(\Omega)} \norm{u^n(t)-u(t)}_{H^1(\Omega)}
\end{align*}
for all $v\in H^1_0(\Omega)$, $n\in\N$, and a.e. $t\in (0,T)$.  Concluding by \eqref{eq:u_beta_convererge}, \eqref{eq:u_beta_bounded}, Proposition \ref{prop:n}, and \eqref{eq:est_beta_N}. For the terms with $f_0$ and $f_N$ in \eqref{eq:u_n}, we may utilize the Cauchy–Schwarz inequality and \eqref{eq:assumptionondata} together with \eqref{eq:u_convererge}. For the term on $\Gamma_N$, we also need Proposition \ref{prop:trace} for $\epsilon=1$.
\\
\indent Lastly, we consider the equation \eqref{eq:beta_eq}. We first note by Lemma \ref{lemma:calculation}, Proposition \ref{prop:traceinfty} and \ref{prop:n}, and \eqref{eq:est_beta_N} that
\begin{align*}
| H(\beta^n(s), u^n(s))| \leq  c_{0\beta} +  c_{3\beta}\norm{u^n}_{L^\infty_T L^\infty(\Omega)}^2\norm{\beta^n}_{L^\infty_T L^\infty(\Gamma_C)} \leq c
\end{align*}
for a.e. $s\in (0,t)\subset (0,T)$. In addition, we have by \hyperref[assumptionH]{$(H3)$}$(ii)$ that
\begin{align}\label{eq:H_est}
&\norm{ H(\beta^n(s), u^n(s))-  H(\beta(s), u(s))}_{L^2(\Gamma_C)} \\
&\leq c( \norm{\beta^n}_{L^\infty_T L^\infty(\Gamma_C)}\norm{u^n}_{L^\infty_T \notag L^\infty(\Omega)}|+\norm{\beta^n}_{L^\infty_T L^\infty(\Gamma_C)} \norm{u}_{L^\infty_T L^\infty(\Omega)})\norm{u^n(s)-u(s)}_{H^1(\Omega)} \\
& + c\norm{u}_{L^\infty_TL^\infty(\Omega)}^2 \norm{\beta^n(s)-\beta(s)}_{L^2(\Gamma_C)} \notag
\end{align}  
for a.e. $s\in (0,t)\subset (0,T)$. Combining Proposition \ref{prop:n}, \eqref{eq:est_beta_N}, and \eqref{eq:u_beta_convererge}-\eqref{eq:u_beta_boundsss}, we may conclude by the dominated convergence theorem.\\
\indent Thus, passing the limit  $n\rightarrow \infty$ in \eqref{eq:u_n}-\eqref{eq:beta_eq} gives us that $(u,\beta) \in L^\infty_T (H^1_0(\Omega) \cap L^\infty(\Omega)) \times C([0,T];L^2(\Gamma_C))\cap L^\infty_TL^\infty(\Gamma_C)$ indeed is a solution to \eqref{eq:eq1}.

\subsection*{Step 6 \textit{(Uniqueness)}.}
It only remains to prove that the solution of \eqref{eq:eq1} is unique. Let $(\beta_1,u_1),(\beta_2,u_2)$ be two solutions to \eqref{eq:eq1}. We choose $v=u_2$ and $v=u_1$, respectively:
\begin{align*}
\int_\Omega &\mu \nabla(u_1(t)-u_2(t))\cdot \nabla(u_1(t)-u_2(t)) \d x\\
&\leq  \int_{\Gamma_C} [\varphi(\beta_1(t),  u_1(t), u_1(t))-\varphi(\beta_1(t), u_1(t), u_2(t))]\d \sigma \\
&+ \int_{\Gamma_C}[\varphi(\beta_2(t),  u_2(t), u_2(t))-\varphi(\beta_2(t),u_2(t), u_1(t))] \d \sigma \\
&+\int_{\Gamma_C} [\phi(\beta_1(t),  u_1(t), u_1(t))-\phi(\beta_1(t), u_1(t), u_2(t))]\d \sigma \\
&+ \int_{\Gamma_C}[\phi(\beta_2(t),  u_2(t), u_2(t))-\phi(\beta_2(t),u_2(t), u_1(t))] \d \sigma 
\end{align*}
for a.e. $t\in(0,T)$. As a result of the lower bound of $\mu$ \eqref{eq:assumptionmu}, \hyperref[assumptionvarphi]{$(H1)$}$(ii)$, \hyperref[assumptiophi]{$(H2)$}$(ii)$, H\"{o}lder's inequality, Proposition \ref{prop:trace} (for $\epsilon=1$) and \ref{prop:traceinfty}, and the Poincar\'{e} inequality, we have
\begin{align*}
& \norm{u_1(t)-u_2(t)}_{H^1(\Omega)}^2 \\
&\leq \frac{c_{1\varphi}c_0}{\mu_\ast}\norm{\beta_1(t) - \beta_2(t)}_{L^2(\Gamma_C)}\norm{u_1(t)-u_2(t)}_{H^1(\Omega)} +  \frac{c_{2\varphi}c_0}{\mu_\ast}\norm{u_1(t)-u_2(t)}_{H^1(\Omega)}^2\\
&+  \frac{c_{1\phi}c_0}{\mu_\ast} \norm{\beta_1}_{L^\infty_TL^\infty(\Gamma_C)}^2\norm{u_1(t)-u_2(t)}_{H^1(\Omega)}^2  \\
&+ \frac{c_{2\phi}c_0}{\mu_\ast} \norm{\beta_2}_{L^\infty_TL^\infty(\Gamma_C)}\norm{u_2}_{L^\infty_TL^\infty(\Omega)}\norm{\beta_1(t) - \beta_2(t)}_{L^2(\Gamma_C)} \norm{u_1(t)-u_2(t)}_{H^1(\Omega)} \\
&+ \frac{c_{3\phi}c_0}{\mu_\ast} \norm{\beta_1}_{L^\infty_TL^\infty(\Gamma_C)}\norm{u_2}_{L^\infty_TL^\infty(\Omega)} \norm{\beta_1(t) - \beta_2(t)}_{L^2(\Gamma_C)} \norm{u_1(t)-u_2(t)}_{H^1(\Omega)}
\end{align*}
for a.e. $t\in (0,T)$. Now, from \eqref{eq:eq1b}, Lemma \ref{lemma:calculation}, and Proposition \ref{prop:traceinfty}, we deduce
\begin{align*}
\norm{\beta_j(t)}_{L^\infty(\Gamma_C)}  &\leq \norm{\beta_0}_{L^\infty(\Gamma_C)} + \int_0^t \norm{H(\beta_j(s), u_j(s))}_{L^\infty(\Gamma_C)} \d s\\
&\leq \norm{\beta_0}_{L^\infty(\Gamma_C)} +c_{0\beta}c_0T + c_{3\beta}c_0\norm{u_j}^2_{L^\infty_TL^\infty(\Omega)} \int_0^t  \norm{\beta_j(s)}_{L^\infty(\Gamma_C)}  \d s
\end{align*}
for a.e. $t\in (0,T)$ and $j=1,2$. An application of Gr\"{o}nwall's inequality implies
\begin{align*}
\norm{\beta_j}_{L^\infty_TL^\infty(\Gamma_C)}  
&\leq \norm{\beta_0}_{L^\infty(\Gamma_C)}(1+cT\norm{u_j}^2_{L^\infty_TL^\infty(\Omega)}\mathrm{e}^{cT\norm{u_j}^2_{L^\infty_TL^\infty(\Omega)}}) \\
&+ cT (1+cT\norm{u_j}^2_{L^\infty_TL^\infty(\Omega)}\mathrm{e}^{cT\norm{u_j}^2_{L^\infty_TL^\infty(\Omega)}}) .
\end{align*}
From  \eqref{eq:u_beta_bounded}, we can choose $T>0$ as in \eqref{eq:T} so that
\begin{align*}
\norm{\beta_j}_{L^\infty_TL^\infty(\Gamma_C)}  
&\leq \norm{\beta_0}_{L^\infty(\Gamma_C)} + cT \norm{\beta_0}_{L^\infty(\Gamma_C)}\norm{u_j}^2_{L^\infty_TL^\infty(\Omega)} \\
&+ cT^\alpha (1+\norm{u_j}^2_{L^\infty_TL^\infty(\Omega)}) \\
&=: \norm{\beta_0}_{L^\infty(\Gamma_C)} + T^\alpha \tilde{R}
\end{align*}
for $\alpha \geq 1$ and $j=1,2$. We also subtract \eqref{eq:eq1b} for $\beta_1$ and $\beta_2$. This yields
\begin{align}\label{eq:this_Eq}
\norm{\beta_1(t) - \beta_2(t)}_{L^2(\Gamma_C)}  \leq \int_0^t \norm{ H(\beta_1(s), u_1(s))-  H(\beta_2(s), u_2(s))}_{L^2(\Gamma_C)} \d s
\end{align}
for a.e. $t\in (0,T)$. Utilizing \eqref{eq:H_est} with $u^n = u_1$, $u=u_2$, $\beta^n = \beta_1$, and $\beta = \beta_2$ implies
\begin{align*}
&\norm{ H(\beta_1(s), u_(s))-  H(\beta_2(s), u_2(s))}_{L^2(\Gamma_C)} \\
&\leq c( \norm{\beta_1}_{L^\infty_T L^\infty(\Gamma_C)}\norm{u_1}_{L^\infty_T L^\infty(\Omega)}+\norm{\beta_1}_{L^\infty_T L^\infty(\Gamma_C)} \norm{u_2}_{L^\infty_T L^\infty(\Omega)})\norm{u_1-u_2}_{L^\infty_T H^1(\Omega)} \\
& + c\norm{u_2}_{L^\infty_TL^\infty(\Omega)}^2 \norm{\beta_1(s)-\beta_2(s)}_{ L^2(\Gamma_C)}\\
&=: c_1\norm{u_1-u_2}_{L^\infty_T H^1(\Omega)} + c_2\norm{\beta_1(s)-\beta_2(s)}_{ L^2(\Gamma_C)}
\end{align*} 
for a.e. $s\in (0,t)\subset (0,T)$. Inserting the above estimate into \eqref{eq:this_Eq} gives us
\begin{align*}
\norm{\beta_1(t) - \beta_2(t)}_{L^2(\Gamma_C)} 
&\leq c_1T\norm{u_1-u_2}_{L^\infty_T H^1(\Omega)} + c_2\int_0^t\norm{\beta_1(s)-\beta_2(s)}_{L^2(\Gamma_C)} \d s
\end{align*}
for a.e. $t\in (0,T)$. From Gr\"{o}nwall's inequality, we obtain 
\begin{align*}
\norm{\beta_1 - \beta_2}_{L^\infty_T L^2(\Gamma_C)} 
&\leq c_1T\norm{u_1-u_2}_{L^\infty_T H^1(\Omega)} (1+c_2T\mathrm{e}^{c_2T}).
\end{align*}
From \eqref{eq:u_beta_bounded}, we may choose $T>0$ as in \eqref{eq:T}. This implies
\begin{align}\label{eq:beta12}
\norm{\beta_1 - \beta_2}_{L^\infty_T L^2(\Gamma_C)} 
&\leq cT\norm{u_1-u_2}_{L^\infty_T H^1(\Omega)}.
\end{align}
Gathering the estimates above reads
\begin{align*}
\norm{u_1-u_2}_{L^\infty_T H^1(\Omega)} &\leq cT\norm{u_1-u_2}_{L^\infty_T H^1(\Omega)} + \frac{c_{2\varphi}c_0}{\mu_\ast} \norm{u_1-u_2}_{L^\infty_T H^1(\Omega)}\\
&+  \frac{c_{1\phi}c_0}{\mu_\ast} (\norm{\beta_0}_{L^\infty(\Gamma_C)}^2 + 2\norm{\beta_0}_{L^\infty(\Gamma_C)}T^\alpha \tilde{R} + T^{2\alpha}\tilde{R}^2)\norm{u_1-u_2}_{L^\infty_T H^1(\Omega)}  \\
&+ cT(\norm{\beta_1}_{L^\infty_TL^\infty(\Gamma_C)}  + \norm{\beta_2}_{L^\infty_TL^\infty(\Gamma_C)} )\norm{u_2}_{L^\infty_TL^\infty(\Omega)}\norm{u_1-u_2}_{L^\infty_T H^1(\Omega)}
\end{align*}
for $\alpha \geq 1$. With the estimate \eqref{eq:u_beta_bounded}, we obtain
\begin{align*}
\norm{u_1-u_2}_{L^\infty_T H^1(\Omega)} \leq ( \frac{c_{2\varphi}c_0+c_{1\phi}c_0 \norm{\beta_0}_{L^\infty(\Gamma_C)}^2}{\mu_\ast}  + cT^\alpha )\norm{u_1-u_2}_{L^\infty_T H^1(\Omega)}. 
\end{align*}
with $c=c(\norm{(f_0,f_N,\beta_0,u^0)}_{L^\infty_TL^\infty(\Omega) \times L^\infty_TL^\infty(\Gamma_N)\times L^\infty(\Gamma_C)  \times L^\infty_T (H^1(\Omega) \cap L^\infty(\Omega)) })$ and $\alpha \geq 1$.  We now conclude that $u_1=u_2$ by choosing $T>0$ as in \eqref{eq:T_less_than_1}. Since $u_1=u_2$, we have from \eqref{eq:beta12} that $\beta_1=\beta_2$.

\subsection*{Step 7 \textit{(Proof of Theorem \ref{thm:mainresult})}.}
\begin{proof}[Proof of Theorem \ref{thm:mainresult}]
Combining Step 1-6 completes the proof of Theorem \ref{thm:mainresult}.
\end{proof}

\SkipTocEntry\section*{Acknowledgements}
\noindent
This research was supported by the VISTA program, the Norwegian Academy of Science and Letters, and Equinor. I would like to thank Mircea Sofonea for his helpful comments that improved a previous version of this manuscript.


\appendix

\section{Elastic antiplane contact problem with adhesion}\label{appendix:application}
\noindent  
We present a system of equations describing the deformation of an elastic cylinder in contact with an obstacle in the presence of frictional and adhesion processes between the bodies. The problem will be considered under the assumption that the cylinder is clamped on one part of the boundary.  We introduce three new evolution equations of the intensity of adhesion and a new static friction law, that is \hyperref[evol1]{$(E1)$}-\hyperref[evol3]{$(E3)$} and \eqref{eq:frictionlawused1}, respectively (see also Remark \ref{remark:comparion}).
\\
\indent In the mathematical treatment of this problem, we may simplify the usual frictional elasticity problem. The cylinder is assumed to be sufficiently long, let this be in the $x_3$-direction, so that the effects in the axial direction are negligible.
In other words, $\Omega \times (-\infty,\infty)$, where $\Omega \subset \R^2$ denote the reference configuration of the cross-section of the cylinder and  $\nu$ the outward normal on $\partial \Omega \times (-\infty,\infty)$ with $\partial \Omega = \Gamma_D \cup \Gamma_N \cup \Gamma_C$.
We are therefore only considering the displacement parallel to the $x_3$-direction, that is, $\mathbf{u} = (0,0,u)^T$ with $u=u(x_1,x_2,t)$. Similarly, for the body forces $\mathbf{f}_0$ and the surface traction $\mathbf{f}_N$ as illustrated in Figure \ref{fig:antipane}  (see also  \cite[Chapter 8]{Sofonea2009}). 
\begin{figure}[H]
\includegraphics[scale=0.84]{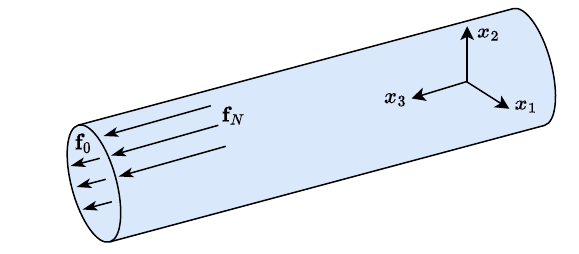}
\includegraphics[scale=0.5]{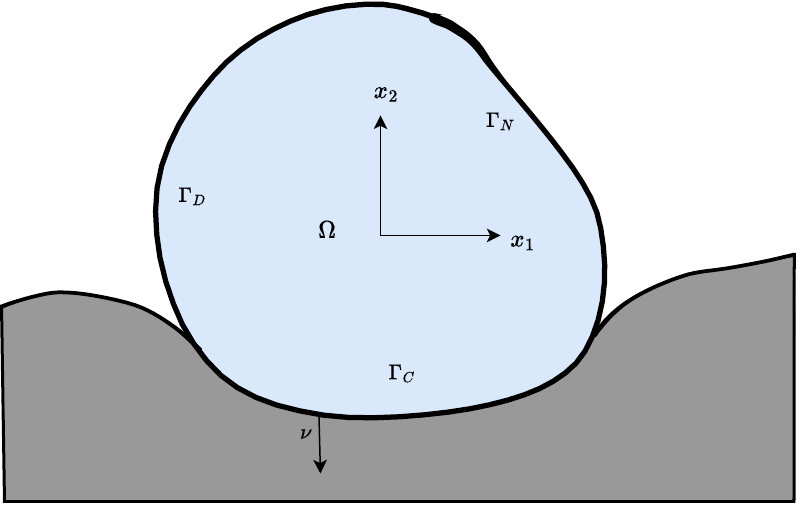}
\centering
\caption{An illustration of an antiplane shear problem.}
\label{fig:antipane}
\end{figure}\noindent
To be clear, we define
\begin{subequations}
\begin{align}\label{eq:surfacetension}
\mathbf{f}_0 &=(0,0, f_0)^T \  \ \text{ with } \ \ f_0 =f_0(x_1,x_2,t) : \Omega \times [0,T] \rightarrow \R,\\
\mathbf{f}_N &=(0,0, f_N)^T \ \ \text{ with } \ \ f_N =f_N(x_1,x_2,t) : \Gamma_N \times [0,T] \rightarrow \R.
\label{eq:and_traction}
\end{align} 
\end{subequations}
For the body \eqref{eq:surfacetension} and traction \eqref{eq:and_traction} forces,  the displacement of the cylinder is given by
\begin{align}\label{eq:u_z}
\mathbf{u} = (0,0,u)^T  \ \  \text{ for } \ \ u =u(x_1,x_2,t) : \Omega \times [0,T] \rightarrow \R.
\end{align}
We observe from \eqref{eq:u_z} that the infinitesimal deformation operator becomes
\begin{align}\label{eq:antiplaneshear} 
\varepsilon(\mathbf{u}) &= \frac{1}{2} (\nabla\mathbf{u} + (\nabla \mathbf{u})^T)
= \begin{bmatrix}
0 & 0 & \frac{1}{2}\frac{\partial u}{\partial x_1}\\
0 & 0 & \frac{1}{2}\frac{\partial u}{\partial x_2}\\
\frac{1}{2}\frac{\partial u}{\partial x_1} & \frac{1}{2}\frac{\partial u}{\partial x_2} & 0
\end{bmatrix}.
\end{align}
This deformation operator is referred to as antiplane shear. We will include some details on the derivation of the simplified friction problem in elasticity for the clarity of the reader (see also \cite{Sofonea2002}). For linearly elastic isometric material, the constitutive law is given by
\begin{align}\label{eq:isometricmaterial_nonred}
\sigma = \tilde{\mu} (\nabla \cdot \mathbf{u})\mathrm{Id} + 2 \mu \varepsilon,
\end{align}
where $\sigma$ is the stress tensor, $\mathrm{Id}$ is the identity map on $\R^3$, and $\mu$ and  $\tilde{\mu}$ are the Lam\'{e} parameters for nonhomogeneous material. Since $\mathbf{u}$ is independent of $x_3$, \eqref{eq:isometricmaterial_nonred} can be written as
\begin{align}\label{eq:isometricmaterial}
\sigma = 2\mu\varepsilon,
\end{align} 
where $\mu = \mu(x_1,x_2)$ in this setting. Considering  the momentum balance equation in the absence of acceleration 
\begin{align*}
-\nabla \cdot \sigma = \mathbf{f}_0,
\end{align*}
we may use \eqref{eq:surfacetension}, \eqref{eq:antiplaneshear}, and \eqref{eq:isometricmaterial} to reduce the equation to
\begin{align}\label{eq:simplfied_momentumbalance}
-\mathrm{div}(\mu \nabla u) = f_0.
\end{align}
Indeed, from  \eqref{eq:antiplaneshear} and \eqref{eq:isometricmaterial}, we have
\begin{align*}
\nabla \cdot \sigma =   2 \mu \nabla \cdot   \varepsilon + 2 \nabla \mu \cdot \varepsilon = \begin{bmatrix}
0\\
0 \\
\mu\frac{\partial^2 u}{\partial x_1^2} +\mu\frac{\partial^2 u}{\partial x_2^2} + \frac{\partial \mu}{\partial x_1} \frac{\partial u}{\partial x_1}+ \frac{\partial \mu}{\partial x_2}\frac{\partial u}{\partial x_2} 
\end{bmatrix}
= \begin{bmatrix}
0\\
0 \\
\mathrm{div}(\mu\nabla u)
\end{bmatrix}.
\end{align*}
Combining the above calculations with \eqref{eq:surfacetension} gives us the simplified momentum balance equation \eqref{eq:simplfied_momentumbalance}. 
\\
\indent Next, we need to complement the system with boundary conditions. We will first consider the Neumann and Dirichlet boundaries, that is, $\Gamma_N$ and $\Gamma_D$, respectively. The stress produced by the traction forces satisfy
\begin{align*}
\sigma\nu &= \mathbf{f}_N,
\end{align*}
where
\begin{align}\label{eq:sigmanu}
\sigma\nu &= (0,0,\mu \nabla u \cdot \nu)^T
\end{align}
and
\begin{align}\label{eq:normal}
\nu &= (\nu_1,\nu_2,0)^T \ \text{ with } \ \nu_1,\nu_2:\partial \Omega \rightarrow \R.
\end{align}
It therefore follows by \eqref{eq:and_traction} that the traction forces acting on $\Gamma_N$ must satisfy
\begin{align*}
\mu\nabla u \cdot \nu = f_N.
\end{align*}
Moreover, the cylinder is assumed to be clamped on $\Gamma_D\times (-\infty,\infty)$. This gives us the following condition on $\Gamma_D$
\begin{align*}
u= 0.
\end{align*}
Lastly, we consider the conditions on contact surface. These are described by the normal and tangential part of $\mathbf{u}$ and $\sigma$, which are, respectively, defined by
\begin{align*}
u_\nu &= \mathbf{u}\cdot \nu,  &\mathbf{u}_\tau  = \mathbf{u}-u_\nu \nu,& &\sigma_\nu = (\sigma\nu )\cdot \nu,&  &\text{and} &&	\sigma_\tau = \sigma\nu - \sigma_\nu\nu.
\end{align*}
The condition in the tangential direction is referred to as the friction law. A static friction law with adhesion is given by
\begin{align}\label{eq:frictionlaw}
\begin{dcases}
|\sigma_\tau - \lambda\beta^2 \mathbf{u}_\tau| \leq  g ,\\
\sigma_\tau- \lambda\beta^2  \mathbf{u}_\tau   = -g \frac{\mathbf{u}_\tau}{|\mathbf{u}_\tau|}, \quad \quad \text{ if } \mathbf{u}_\tau \neq 0,
\end{dcases}
\end{align}
where $g$ is the friction bound and $\lambda$ is a stiffness coefficient (see, e.g., \cite{Shillor2004,Sofonea2012}). Following \cite{Fremond1988},  $\beta$ is introduced as the measure of the intensity of adhesion (also referred to as the bonding field) (see also \cite{Shillor2004}). The physical constraint on the bonding field is $0\leq \beta \leq 1$; the bonds are active when $\beta = 1$ at a point on the contact surface, the bonds are severed if $\beta= 0$, and the bonds are partially active for $0<\beta <1$. As mentioned in Section \ref{sec:intro}, the adhesion processes are modelled through an ODE on the contact surface describing the evolution of the bonding field $\beta$. In our setting, we may simplify \eqref{eq:frictionlaw}. By \eqref{eq:u_z} and \eqref{eq:normal}, we first observe that $u_\nu=0$. In other words, the bodies stay in contact. With \eqref{eq:sigmanu}, this also implies that 
\begin{align*}
\mathbf{u}_\tau &=   (0,0,u) &\text{ and }&& \sigma_\tau  = (0,0,\mu \nabla u \cdot \nu).
\end{align*}
Consequently, \eqref{eq:frictionlaw} becomes the following friction law on $\Gamma_C$
\begin{align}\label{eq:frictionlawused1}
\begin{dcases}
|\mu\nabla u \cdot \nu - \lambda\beta^2 u| \leq  g ,\\
\mu\nabla u \cdot \nu - \lambda\beta^2 u  = -g \frac{u}{|u|}, \quad \quad \text{ if } u \neq 0,
\end{dcases}
\end{align}
where $g=g(|u|,\beta)$. The general framework introduced in this paper has applications of three new evolution equations of the bonding field (see Remark \ref{remark:comparion} for comparison to existing results):
\begin{table}[H]
\centering
\begin{tabular}{c||c||c}
Evolution equation $(E1)$\label{evol1}   &Evolution equation $(E2)$\label{evol2} & Evolution equation $(E3)$\label{evol3}  
\\
\hline  &&
\\
$\displaystyle\frac{\partial \beta}{\partial t} =  \mathcal{E}_D -   \lambda u^2  \beta $ &  $\displaystyle\frac{\partial \beta}{\partial t}=-(\lambda u^2 \beta -\mathcal{E}_D )_+$ &
$\displaystyle\frac{\partial \beta}{\partial t}  = -\lambda \frac{\beta}{1+\beta} u^2$
\end{tabular}
\end{table}\noindent
Here, $(\cdot)_+ = \max\{(\cdot),0\}$ and the coefficient $\mathcal{E}_D$ depends on the material (see, e.g., \cite[Section 4.2]{Shillor2004} for information on this quantity). The evolution equations \hyperref[evol1]{$(E1)$}-\hyperref[evol3]{$(E3)$} are related to \eqref{eq:eq1b}. The evolution equation \hyperref[evol2]{$(E2)$} is derived under the assumption that $\frac{\partial \beta}{\partial t} \leq 0$. Without this assumption, the general equation is given by \hyperref[evol1]{$(E1)$}. We refer the reader to \cite[Section 4.2]{Shillor2004} for more details. A truncated version of \hyperref[evol3]{$(E3)$} can be found in, e.g., \cite{Sofonea2002}. See Remark \ref{remark:comparion} for further comments. 
\\
\indent
Summarizing the above equations gives us the following problem with antiplane shear deformation.
\begin{prob}\label{prob:application}
Find the displacement $u: \Omega \times [0,T] \rightarrow \mathbb{R}$ and the bonding field $\beta : \Gamma_C \times [0,T] \rightarrow \mathbb{R}$ such that
\begin{subequations}
\begin{align}
-\mathrm{div}(\mu\nabla u(t)) &=  f_0(t) &  \text{ in }& \Omega \times (0,T)\\
u(t) &= 0 & \text{ on }& \Gamma_D \times (0,T)
\\
\mu\nabla u(t) \cdot \nu &= f_N(t) & \text{ on } &\Gamma_N \times (0,T)
\\
|\mu\nabla u(t) \cdot \nu- \lambda \beta^2 u| &\leq  g (|u(t)|, \beta(t))     & \text{ on }& \Gamma_C \times (0,T)
\label{eq:prob1_friction_eq2}
\\ \label{eq:prob1_friction_eq}
\mu\nabla u(t) \cdot \nu - \lambda \beta^2 u &= - g(|u(t)|, \beta(t))  \frac{u(t)}{|u(t)|}, \ \ \text{ if }u(t)\neq 0 & \text{ on }& \Gamma_C \times (0,T)\\
\frac{\partial \beta(t)}{\partial t} &=  H (\beta(t), u(t)) & \text{ on }& \Gamma_C \times (0,T)
\intertext{with the initial conditions}
\beta (0) &= \beta_{0}   &   \text{ on }& \Gamma_C,
\end{align}
\end{subequations}
where $H$ is given by either the right-hand side of  \hyperref[evol1]{$(E1)$},  \hyperref[evol2]{$(E2)$}, or  \hyperref[evol3]{$(E3)$}.
\end{prob}
Here, we investigate Problem \ref{prob:application} under the hypotheses \eqref{eq:assumptionmu}, \eqref{eq:assumptionondata},  and
\begin{align}\label{eq:assumption_g}
&g : \Gamma_C \times \mathbb{R} \times \mathbb{R} \rightarrow \mathbb{R} \text{ is  measurable, satisfies }\\ \notag
& |g(x,r_1,y_1) - g(x,r_2,y_2) | \leq   c_{1g}|y_1-y_2|+c_{2g}|r_1-r_2| \\ \notag
&\text{ for all } r_i,y_i\in \mathbb{R} \text{ with } i=1,2, \text{ a.e. } x\in \Gamma_C,\text{ and some } c_{1g},c_{2g} \geq 0,\\ \notag
& \text{ and there is a $c_{0g}\geq 0$ such that }  g(x,0,0) = c_{0g} \text{ for a.e. } x\in \Gamma_C,
\end{align}
\begin{align}\label{eq:assumption_EDLG}
\mathcal{E}_D &\in L^\infty(\Gamma_C) &\text{ and }& & \lambda \in L^\infty(\Gamma_C), \ \lambda \geq 0&. 
\end{align}
\begin{align}\label{eq:smallness_app}
\mu_\ast > c_{2g}c_0 + \norm{\lambda}_{L^\infty(\Gamma_C)}c_0 \norm{\beta_0}_{L^\infty(\Gamma_C)},
\end{align}
where $c_0 = c(|\Bar{\Omega}|)$ is a positive constant.
\begin{remark}
The condition on $\mu_\ast$ is a restriction on the elasticity of the material.
\end{remark}
\begin{remark}
Similar assumptions on $g$ are found in, e.g., \cite{Sofonea2018}.
\end{remark}
\begin{remark}
Neglecting $\beta^2$ in \eqref{eq:prob1_friction_eq2}-\eqref{eq:prob1_friction_eq}, the smallness assumption \eqref{eq:smallness_app} is reduced to 
\begin{align}\label{eq:smallness_app_updated}
\mu_\ast > c_{2g} c_0,
\end{align}
which is found in, e.g., \cite{Sofonea2002,Sofonea2020,Migorski2010}. 
\end{remark}
Assuming sufficient regularity, Problem \ref{prob:application} has the variational form 
\begin{subequations}
\begin{align}\label{eq:variational_form}
\int_\Omega& \mu \nabla u(t) \cdot \nabla (v-u(t)) \d x +	\int_{\Gamma_C} g(|u(t)|,\beta(t)) (|v|-|u(t)|) \d \sigma \\ \notag
&-\int_{\Gamma_C} \lambda \beta(t)^2 u(t) (v-u(t)) \d \sigma   \geq 	\int_\Omega f_0(t) (v-u(t)) \d x  + \int_{\Gamma_N} f_N(t) (v-u(t)) \d \sigma
\notag
\end{align}
for all $v\in H^1_0(\Omega)$ (defined in Section \ref{sec:funcspace}) and a.e. $t\in (0,T)$. Since we are interested in a mild solution to Problem \ref{prob:application}, we will study \eqref{eq:variational_form} coupled with
\begin{align}\label{eq:integralform_beta}
\beta(t) &=  \beta_0 + \int_0^t H (\beta(s), u(s)) \d s 
\end{align}
\end{subequations}
for all $x\in \Gamma_C$ and a.e. $t\in (0,T)$.
\begin{remark}
To obtain \eqref{eq:variational_form}, we refer the reader to, e.g., \cite[p. 145-146]{Sofonea2012} for a detailed derivation (replacing $\sigma_\tau$ with $\mu\nabla u \cdot \nu - \lambda\beta^2 u$). 
\end{remark}
\begin{remark}\label{remark:comparion}
Both in the usual frictional elastic and viscoelastic problems and in the simplified system considered here, they introduce truncation operators in the friction law and evolution equation for the bonding field (see, e.g., \cite{Sofonea2006}). By introducing the truncation operator $R_1 : \R \rightarrow \R$  
defined as
\begin{align}
R_1(s) = &\begin{dcases}
\  \ L,\  \ \ \text{ if }  s \geq  L,\\
\ \  s,  \ \ \ \:  \text{ if } |s| \leq L,\\
-L,  \ \ \ \text{ if }  s \leq -L,
\end{dcases} 
&\text{with } L>0,
\intertext{ we may reduce Problem \ref{prob:application} to the one studied in \cite{Sofonea2002}. Indeed, with $R_L$ in mind, we may write \hyperref[evol3]{$(E3)$} and \eqref{eq:prob1_friction_eq2}-\eqref{eq:prob1_friction_eq}, respectively, as}
\label{eq:truncated_versions}
&\begin{dcases}
\frac{\partial \beta}{\partial t}  = -\lambda \frac{\beta_+}{1+\beta_+} (R_{L_1}(u))^2,\\
\mu\nabla u \cdot \nu  = -q(\beta) R_{L_2}(u),
\end{dcases}
& \text{for some }L_1,L_2 >0,
\end{align} 
where $q$ is a non-negative stiffness function. Here, $q(\beta) R_{L_2}(u)$ satisfies a Lipschitz bound with respect to both arguments.	Now, Problem \ref{prob:application} with \eqref{eq:truncated_versions} is the system of equations studied in \cite{Sofonea2002}. Moreover, truncated versions of \hyperref[evol1]{$(E1)$} (for $\mathcal{E}_D = 0$), \hyperref[evol2]{$(E2)$}, \hyperref[evol3]{$(E3)$}, and  \eqref{eq:frictionlawused1} have been studied extensively in the usual frictional elastic and viscoelastic problems, see, for instance, \cite{Chau2003,Sofonea2006,Lerguet2008,SofoneaMatei2006},\cite[Section 11.4]{Shillor2004}. Additionally, in \cite[Section 11.4]{Shillor2004}, the part with $\beta^2$ in the friction law is neglected. In \cite{Chau2003,Sofonea2006,SofoneaMatei2006}, they neglect  $\beta^2$ in the friction law but include $\beta^2$ in the contact condition for the normal component of the stress. We have only mentioned quasi-static and static contact problems so far, but the truncations are also used in the dynamic problems, see, e.g., \cite{Chau2003_dynamic,Chau2004,Fernadndez2003_reversibleprocess}.  
\end{remark}
The existence, uniqueness, and regularity result for Problem \ref{prob:application} is a consequence of Theorem \ref{thm:mainresult}. However, for $H$ given by the right-hand side of \hyperref[evol3]{$(E3)$}, we need to restrict the initial bonding field. We therefore postpone this to Corollary \ref{cor:beta012}. We summarize the result for Problem \ref{prob:application} with $H$ given by the right-hand side of \hyperref[evol1]{$(E1)$} or \hyperref[evol2]{$(E2)$} below.
\begin{cor}\label{cor:application}
Assume that \eqref{eq:assumption_g}-\eqref{eq:smallness_app}, \eqref{eq:assumptionmu}, and \eqref{eq:assumptionondata} hold. Let either 
\begin{subequations}
\begin{align}\label{eq:H_E1}
H &= \mathcal{E}_D -   \lambda u^2  \beta,  
\intertext{ or }
H&=-(\lambda u^2 \beta -\mathcal{E}_D )_+.\label{eq:H_E2}
\end{align}
\end{subequations}
Then, there exists a $T>0$ so that $(u,\beta) \in L^\infty_T (H^1_0(\Omega) \cap L^\infty(\Omega)) \times C([0,T];L^2(\Gamma_C)) \cap L^\infty_T L^\infty(\Gamma_C) $ is a unique mild solution to Problem \ref{prob:application}, or more precisely a unique solution to \eqref{eq:variational_form}-\eqref{eq:integralform_beta}.
\end{cor}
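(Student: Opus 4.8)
The plan is to recognise \eqref{eq:variational_form}--\eqref{eq:integralform_beta} as a particular instance of the abstract system \eqref{eq:eq1} and then invoke Theorem \ref{thm:mainresult}. First I would fix the data in \eqref{eq:eq1} by setting, for a.e.\ $x\in\Gamma_C$ and all $y,r,v\in\R$,
\[
\varphi(x,y,r,v):=g(x,|r|,y)\,|v|,\qquad \phi(x,y,r,v):=-\lambda(x)\,y^{2}\,r\,v,
\]
together with $H(x,y,r):=\mathcal{E}_D(x)-\lambda(x)\,r^{2}y$ in case \eqref{eq:H_E1} and $H(x,y,r):=-\bigl(\lambda(x)\,r^{2}y-\mathcal{E}_D(x)\bigr)_{+}$ in case \eqref{eq:H_E2}. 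With $\beta$ in the second slot of $\varphi,\phi,H$ and $u$ in the third, the two surface integrals in \eqref{eq:variational_form} are exactly $\int_{\Gamma_C}\bigl[\varphi(\beta(t),u(t),v)-\varphi(\beta(t),u(t),u(t))\bigr]\d\sigma$ and $\int_{\Gamma_C}\bigl[\phi(\beta(t),u(t),v)-\phi(\beta(t),u(t),u(t))\bigr]\d\sigma$, and \eqref{eq:integralform_beta} is precisely \eqref{eq:eq1b}; hence \eqref{eq:variational_form}--\eqref{eq:integralform_beta} \emph{is} the system \eqref{eq:eq1} for this choice of $\varphi,\phi,H$, so a solution of one is a solution of the other. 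It then only remains to check that \hyperref[assumptionvarphi]{$(H1)$}--\hyperref[assumptionH]{$(H3)$} and \eqref{eq:assumptions_mu}--\eqref{eq:assumptionondata} follow from the hypotheses \eqref{eq:assumption_g}--\eqref{eq:smallness_app}, \eqref{eq:assumptionmu} and \eqref{eq:assumptionondata}.

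Each of these verifications is elementary. For \hyperref[assumptionvarphi]{$(H1)$}: measurability in $x$ is inherited from $g$; the four-term combination in \hyperref[assumptionvarphi]{$(H1)$}$(ii)$ telescopes to $\bigl(g(x,|r_1|,y_1)-g(x,|r_2|,y_2)\bigr)\bigl(|v_2|-|v_1|\bigr)$, so the Lipschitz estimate in \eqref{eq:assumption_g} together with $\bigl||v_1|-|v_2|\bigr|\le|v_1-v_2|$ and $\bigl||r_1|-|r_2|\bigr|\le|r_1-r_2|$ gives the bound with $c_{1\varphi}=c_{1g}$, $c_{2\varphi}=c_{2g}$, and \hyperref[assumptionvarphi]{$(H1)$}$(iii)$ holds with $c_{0\varphi}=c_{0g}$ since $g(x,0,0)=c_{0g}$. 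For \hyperref[assumptiophi]{$(H2)$}, the analogous combination equals $\lambda(x)\,(v_1-v_2)\bigl(y_1^{2}r_1-y_2^{2}r_2\bigr)$, and the splitting $y_1^{2}r_1-y_2^{2}r_2=y_1^{2}(r_1-r_2)+r_2(y_1-y_2)(y_1+y_2)$ yields \hyperref[assumptiophi]{$(H2)$}$(ii)$ with $c_{1\phi}=c_{2\phi}=c_{3\phi}=\norm{\lambda}_{L^\infty(\Gamma_C)}$, while $\phi(x,0,0,v)=0$ gives \hyperref[assumptiophi]{$(H2)$}$(iii)$. For \hyperref[assumptionH]{$(H3)$}, in case \eqref{eq:H_E1} one has $|H(x,y_1,r_1)-H(x,y_2,r_2)|=|\lambda(x)|\,|r_1^{2}y_1-r_2^{2}y_2|$, and the symmetric splitting $r_1^{2}y_1-r_2^{2}y_2=r_2^{2}(y_1-y_2)+y_1(r_1-r_2)(r_1+r_2)$ gives \hyperref[assumptionH]{$(H3)$}$(ii)$ with $c_{1\beta}=c_{2\beta}=c_{3\beta}=\norm{\lambda}_{L^\infty(\Gamma_C)}$; case \eqref{eq:H_E2} reduces to the same estimate because $s\mapsto(s)_{+}$ is $1$-Lipschitz. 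Finally \eqref{eq:assumptionmu} is assumed, \eqref{eq:smallness} follows from \eqref{eq:smallness_app} via the values $c_{2\varphi}=c_{2g}$, $c_{1\phi}=\norm{\lambda}_{L^\infty(\Gamma_C)}$ and the bonding-field normalisation $\norm{\beta_0}_{L^\infty(\Gamma_C)}\le1$ (so that $\norm{\beta_0}_{L^\infty(\Gamma_C)}^{2}\le\norm{\beta_0}_{L^\infty(\Gamma_C)}$), and \eqref{eq:assumptionondata} is among the hypotheses. Theorem \ref{thm:mainresult} then provides a unique $(u,\beta)\in L^\infty_T(H^1_0(\Omega)\cap L^\infty(\Omega))\times C([0,T];L^2(\Gamma_C))\cap L^\infty_T L^\infty(\Gamma_C)$ solving \eqref{eq:eq1}, which by the preceding identification is the asserted unique mild solution of Problem \ref{prob:application}.

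I expect no substantial analytic obstacle: Theorem \ref{thm:mainresult} already carries the load, and the proof is in essence the bookkeeping of matching the two problems and tracking the constants. The points deserving a little care are the following. First, $H(\cdot,0,0)$ equals $\mathcal{E}_D$ (case \eqref{eq:H_E1}) or $\min\{\mathcal{E}_D,0\}$ (case \eqref{eq:H_E2}), which lies in $L^\infty(\Gamma_C)$ by \eqref{eq:assumption_EDLG}; this is all that Lemma \ref{lemma:calculation} and the a priori bound \eqref{eq:est_beta_useful} genuinely use, so if \hyperref[assumptionH]{$(H3)$}$(iii)$ is read literally one should either take $\mathcal{E}_D$ constant or replace $c_{0\beta}$ throughout by $\norm{H(\cdot,0,0)}_{L^\infty(\Gamma_C)}$. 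Second, the implication $\eqref{eq:smallness_app}\Rightarrow\eqref{eq:smallness}$ uses $\norm{\beta_0}_{L^\infty(\Gamma_C)}\le1$. Third, the law in \hyperref[evol3]{$(E3)$} does not fit this scheme directly, since the factor $\beta/(1+\beta)$ is well behaved only for $\beta\ge0$; that case requires the invariance $\beta_0\in[0,1]\Rightarrow\beta(t)\in[0,1]$ and is therefore treated separately in Corollary \ref{cor:beta012}.
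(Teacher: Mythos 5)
Your proof takes essentially the same route as the paper: identify $\varphi(x,y,r,v)=g(x,|r|,y)|v|$, $\phi(x,y,r,v)=-\lambda(x)y^2rv$, and the appropriate $H$, verify \hyperref[assumptionvarphi]{$(H1)$}--\hyperref[assumptionH]{$(H3)$} with $c_{0\varphi}=c_{0g}$, $c_{1\varphi}=c_{1g}$, $c_{2\varphi}=c_{2g}$, $c_{1\phi}=c_{2\phi}=c_{3\phi}=c_{1\beta}=c_{2\beta}=c_{3\beta}=\norm{\lambda}_{L^\infty(\Gamma_C)}$, $c_{0\beta}=\norm{\mathcal{E}_D}_{L^\infty(\Gamma_C)}$, check \eqref{eq:smallness}, and invoke Theorem~\ref{thm:mainresult}; the paper states the same identifications but leaves the algebraic verification to the reader, whereas you carry it out. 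The two points of care you flag are both real: \hyperref[assumptionH]{$(H3)$}$(iii)$ as stated demands $H(\cdot,0,0)$ to be constant, which fails for nonconstant $\mathcal{E}_D$ and must be read as the bound $\norm{H(\cdot,0,0)}_{L^\infty(\Gamma_C)}\le c_{0\beta}$ (all Lemma~\ref{lemma:calculation} and \eqref{eq:est_beta_useful} actually use), and the step ``\eqref{eq:smallness_app} $\Rightarrow$ \eqref{eq:smallness}'' silently uses $\norm{\beta_0}_{L^\infty(\Gamma_C)}\le 1$, which is natural for a bonding field but is not among the stated hypotheses of the corollary.
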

\begin{remark}
We observe that \eqref{eq:H_E1}-\eqref{eq:H_E2} is the right-hand side of  \hyperref[evol1]{$(E1)$}-\hyperref[evol2]{$(E2)$}, respectively.
\end{remark}
\begin{remark}
The function spaces in the corollary are defined in Section \ref{sec:funcspace}.
\end{remark}
\begin{proof}
To utilize Theorem \ref{thm:mainresult}, we first have to write \eqref{eq:variational_form} on the same form as \eqref{eq:eq1a}. The next step is to check that the hypotheses of the theorem are fulfilled. We define the functions $\varphi : \Gamma_C \times \R \times \R \times \R \rightarrow \R$ and $\phi: \Gamma_C \times \R \times \R \times \R \rightarrow \R$, respectively, by
\begin{align}\label{eq:varphi_phi}
\varphi(x,\beta, u,  v) &=  g(|u|,\beta)|v| 
&\text{ and }&
&\phi(x,\beta, u,  v) &= -\lambda \beta^2 u v
\end{align}
for all $\beta,u,v \in \R$. Then \eqref{eq:variational_form} has the form of \eqref{eq:eq1a}. It is direct to verify that $\varphi$, $\phi$, and $H$ satisfy \hyperref[assumptionvarphi]{$(H1)$}-\hyperref[assumptionH]{$(H3)$}, respectively, with
\begin{align*}
c_{0\varphi}&= c_{0g}, &c_{1\varphi}= c_{1g} & &c_{2\varphi}= c_{2g}, & & c_{0\beta} = \norm{\mathcal{E}_D}_{L^\infty(\Gamma_C)},
\end{align*}
\begin{align*}
    c_{1\phi}&= c_{2\phi}= c_{3\phi} = c_{1\beta} = c_{2\beta} = c_{3\beta} = \norm{\lambda}_{L^\infty(\Gamma_C)}.
\end{align*}
Thus, it follows by \eqref{eq:smallness_app} that \eqref{eq:smallness} holds. Consequently, we may conclude by Theorem \ref{thm:mainresult} that Problem \ref{prob:application} has a unique mild solution with the desired regularity. 
\end{proof}
As mentioned earlier in this section, the physical constraint on $\beta$ is keeping its range in $[0,1]$ (see, e.g., \cite{Shillor2004} for more details). It is therefore natural to assume that $\beta_0$ has this constraint. To obtain the physical constraint for $\beta$, we need to make a bit stricter assumption on $\beta_0$ (see Assumption \ref{assump:assump}). Additionally, we will see that this implies that \eqref{eq:integralform_beta} with either \eqref{eq:H_E1} or \eqref{eq:H_E2} is irreversible. This coincides with the properties for the existing evolution equations for adhesion (see Remark \ref{remark:thizone}-\ref{remark:props}). Consequently, the extra assumption on $\beta_0$ is justified and of the form:
\begin{assump}\label{assump:assump}
There exists a $\Tilde{c}>0$ so that $\Tilde{c} \leq \beta_0(x) \leq 1$ for all $x\in \Gamma_C$.
\end{assump}
\begin{remark}
If Assumption \ref{assump:assump} holds, the smallness-condition \eqref{eq:smallness_app} is reduced to 
\begin{align}\label{eq:smallness_beta01}
\mu_\ast > c_{2g}c_0 + \norm{\lambda}_{L^\infty(\Gamma_C)}c_0.
\end{align}
\end{remark}
The verification that the solution $\beta$ of the integral equation \eqref{eq:integralform_beta} is decreasing and has the physical constraint $[0,1]$ is summarized in the next corollary.
\begin{cor}\label{lem:beta01}
Assume that \eqref{eq:assumption_g}-\eqref{eq:assumption_EDLG}, \eqref{eq:smallness_beta01}, \eqref{eq:assumptionmu}, \eqref{eq:assumptionondata},  and  Assumption \ref{assump:assump} hold. Then
\begin{itemize}
\item[$(a)$] there exists a $T>0$ such that $(u,\beta) \in  L^\infty_T (H^1_0(\Omega) \cap L^\infty(\Omega)) \times C([0,T];L^2(\Gamma_C)) \cap L^\infty_T L^\infty(\Gamma_C) $ is a unique solution to \eqref{eq:variational_form}-\eqref{eq:integralform_beta} with $H$ given by either \eqref{eq:H_E2} or
\begin{align}\label{eq:H_E1_ED0}
H &=  -   \lambda u^2  \beta.
\end{align}
\end{itemize}
\begin{itemize}
\item[$(b)$] $t\mapsto \beta(\cdot,t)$ is decreasing and $0\leq\beta(x,t) \leq 1$ for all $x\in \Gamma_C$ and a.e. $t\in (0,T)$. 
\end{itemize} 
\end{cor}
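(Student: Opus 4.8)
The plan is to obtain part (a) directly from Corollary \ref{cor:application}, and then to read off the qualitative properties in part (b) from the structure of the two admissible right-hand sides by elementary ODE arguments carried out pointwise on $\Gamma_C$.

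For part (a), I would note that \eqref{eq:H_E1_ED0} is precisely \eqref{eq:H_E1} with the admissible choice $\mathcal{E}_D\equiv 0\in L^\infty(\Gamma_C)$, while \eqref{eq:H_E2} coincides with the second alternative in Corollary \ref{cor:application}; moreover Assumption \ref{assump:assump} forces $\norm{\beta_0}_{L^\infty(\Gamma_C)}\le 1$, so that $c_{2g}c_0+\norm{\lambda}_{L^\infty(\Gamma_C)}c_0\norm{\beta_0}_{L^\infty(\Gamma_C)}\le c_{2g}c_0+\norm{\lambda}_{L^\infty(\Gamma_C)}c_0$, and hence \eqref{eq:smallness_beta01} implies the smallness assumption \eqref{eq:smallness_app} used in Corollary \ref{cor:application}. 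Since all the remaining hypotheses there are exactly those assumed in the statement, Corollary \ref{cor:application} yields existence, uniqueness and the stated regularity.

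For part (b), fix the solution $(u,\beta)$ from part (a). Because \eqref{eq:integralform_beta} holds for each $x\in\Gamma_C$, the map $t\mapsto\beta(x,t)$ is absolutely continuous on $[0,T]$ with $\beta(x,0)=\beta_0(x)$ and $\partial_t\beta(x,t)=H(x,\beta(x,t),u(x,t))$ for a.e.\ $t$, and by Proposition \ref{prop:traceinfty} one has $|u(x,t)|\le c_0\norm{u}_{L^\infty_TL^\infty(\Omega)}$ for a.e.\ $x\in\Gamma_C$ and a.e.\ $t$, so $s\mapsto\lambda(x)u(x,s)^2$ is integrable with $\int_0^t\lambda(x)u(x,s)^2\d s\le \norm{\lambda}_{L^\infty(\Gamma_C)}\norm{u}_{L^\infty_TL^\infty(\Omega)}^2\,T$. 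When $H=-\lambda u^2\beta$ (i.e.\ \eqref{eq:H_E1_ED0}) the scalar equation is linear in $\beta$, so by uniqueness $\beta(x,t)=\beta_0(x)\exp\big(-\int_0^t\lambda(x)u(x,s)^2\d s\big)$; the exponent is nonpositive and nonincreasing in $t$ while $\beta_0(x)\in[\tilde c,1]$, whence $0<\beta(x,t)\le\beta_0(x)\le 1$ and $t\mapsto\beta(x,t)$ is nonincreasing. When $H=-(\lambda u^2\beta-\mathcal{E}_D)_+$ (i.e.\ \eqref{eq:H_E2}) the right-hand side is nonpositive, so $t\mapsto\beta(x,t)$ is nonincreasing and therefore $\beta(x,t)\le\beta_0(x)\le 1$; for the lower bound I would set $t^\star:=\sup\{\tau\in[0,T]:\beta(x,s)\ge 0\ \text{for all }s\in[0,\tau]\}>0$ and note that on $[0,t^\star]$ one has $\beta\ge 0$, hence, using $\lambda\ge 0$ and $\mathcal{E}_D\ge 0$, $(\lambda u^2\beta-\mathcal{E}_D)_+\le\lambda u^2\beta$ and so $\partial_t\beta\ge-\lambda u^2\beta$ a.e.; Gr\"onwall's inequality then gives $\beta(x,t)\ge\beta_0(x)\exp\big(-\int_0^t\lambda u^2\d s\big)\ge\tilde c\,\mathrm{e}^{-\norm{\lambda}_{L^\infty(\Gamma_C)}\norm{u}_{L^\infty_TL^\infty(\Omega)}^2 T}=:\delta>0$ on $[0,t^\star]$, and if $t^\star<T$ this together with continuity of $\beta(x,\cdot)$ would force $\beta(x,\cdot)>0$ on a neighbourhood of $t^\star$, contradicting the definition of $t^\star$; hence $t^\star=T$ and $0<\delta\le\beta(x,t)\le 1$ on $[0,T]$. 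Collecting this over a.e.\ $x\in\Gamma_C$ proves (b).

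The existence part is essentially automatic; the main obstacle is making the pointwise-in-$x$ ODE reasoning in part (b) rigorous — in particular that $\beta$ genuinely satisfies the scalar differential equation for a.e.\ $x$ (and not merely the $L^2(\Gamma_C)$-valued integral equation), and that the threshold $t^\star$ and the Gr\"onwall comparison behave measurably in $x$ — together with the nonsmoothness of $(\cdot)_+$ in \eqref{eq:H_E2}, which rules out an explicit representation formula and forces the comparison argument. The role of the sign condition $\mathcal{E}_D\ge 0$ is precisely to guarantee $(\lambda u^2\beta-\mathcal{E}_D)_+\le\lambda u^2\beta$ on $\{\beta\ge 0\}$, which is exactly what propagates the lower bound $\beta\ge 0$ in time.
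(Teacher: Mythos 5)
Part (a) of your argument matches the paper exactly and is fine. For part (b) you take a genuinely different route from the paper, and it is worth comparing. The paper does not use explicit formulas or a continuation argument: for \eqref{eq:H_E1_ED0} it simply estimates
\[
\beta(x,t)=\beta_0(x)-\int_0^t\lambda\beta(x,s)u(x,s)^2\,\mathrm{d}s\ \geq\ \beta_0(x)-T\,\norm{\lambda}_{L^\infty(\Gamma_C)}\norm{\beta}_{L^\infty_TL^\infty(\Gamma_C)}\norm{u}^2_{L^\infty_TL^\infty(\Omega)},
\]
shrinks $T$ so the right side is $\geq\tilde c/2>0$, and only then uses the now-established positivity of $\beta$ to see the integrand is nonnegative, giving monotonicity and $\beta\leq\beta_0\leq 1$. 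Your explicit exponential formula $\beta(x,t)=\beta_0(x)\exp(-\int_0^t\lambda u^2\,\mathrm{d}s)$ for \eqref{eq:H_E1_ED0} is cleaner and gives positivity without any further reduction of $T$; it is a legitimate improvement, modulo the (standard but worth stating) identification of the $L^2(\Gamma_C)$-valued integral solution with the pointwise-in-$x$ scalar ODE solution.

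There is, however, a genuine gap in your treatment of \eqref{eq:H_E2}. Your Gr\"onwall comparison hinges on the inequality $(\lambda u^2\beta-\mathcal{E}_D)_+\leq\lambda u^2\beta$ on $\{\beta\geq 0\}$, which you correctly identify as requiring $\mathcal{E}_D\geq 0$. But the hypothesis \eqref{eq:assumption_EDLG} only assumes $\mathcal{E}_D\in L^\infty(\Gamma_C)$ with no sign restriction; the sign constraint appears nowhere in the corollary's hypotheses. If $\mathcal{E}_D<0$ on a set of positive measure, your comparison inequality fails and the continuation argument for the lower bound breaks. The paper's proof avoids this entirely by using the crude bound $(\lambda\beta u^2-\mathcal{E}_D)_+\leq\norm{\lambda}_{L^\infty(\Gamma_C)}\norm{\beta}_{L^\infty_TL^\infty(\Gamma_C)}\norm{u}^2_{L^\infty_TL^\infty(\Omega)}+\norm{\mathcal{E}_D}_{L^\infty(\Gamma_C)}$, and then shrinking $T$ so that $\beta\geq\tilde c/2>0$; the decreasing property and upper bound $\beta\leq 1$ are then automatic from $(\cdot)_+\geq 0$, with no sign assumption on $\mathcal{E}_D$ needed. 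To repair your proof under the stated hypotheses you should either fall back to that $T$-shrinking estimate for the lower bound in the \eqref{eq:H_E2} case, or explicitly note that you are adding the (physically reasonable, but unstated) hypothesis $\mathcal{E}_D\geq 0$.
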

\begin{remark}\label{remark:thizone}
If $\beta$ satisfies the properties in Corollary \ref{lem:beta01}, then the integral equations of \hyperref[evol1]{$(E1)$} with $\mathcal{E}_D =0$ and \hyperref[evol2]{$(E2)$} model irreversible processes and $\beta$ satisfies the desired physical constraint $[0,1]$.
\end{remark}
\begin{remark}\label{remark:props}
The properties in Corollary \ref{lem:beta01} are also true in the truncated version of the evolution equations (see, e.g., \cite{Shillor2004,Lerguet2008,SofoneaMatei2006}). In other words, the truncated version of \eqref{eq:integralform_beta} with  \eqref{eq:H_E2} or \eqref{eq:H_E1_ED0} (see Remark \ref{remark:comparion}) also possess these properties.
\end{remark}
\begin{remark}
Taking $\mathcal{E}_D =0$ was used in, e.g., \cite[Section 11.4]{Shillor2004} and \cite{SofoneaMatei2006}. 
\end{remark}
\begin{proof}
The result in $(a)$ follows by  Corollary \ref{cor:application}. 
\\
\indent 
To verify the properties in $(b)$, we consider first \eqref{eq:integralform_beta} with \eqref{eq:H_E1_ED0}.  Applying H\"{o}lder's inequality and Proposition \ref{prop:traceinfty} to \eqref{eq:integralform_beta} with \eqref{eq:H_E1_ED0}, we have for all $x\in \Gamma_C$ that
\begin{align*}
\beta(x,t) &= \beta_0(x) - \int_0^t \lambda \beta(x,s) u(x,s)^2\d s  \geq \beta_0(x) -  T\norm{\lambda}_{L^\infty(\Gamma_C)} \norm{\beta}_{L^\infty_TL^\infty(\Gamma_C)} \norm{u}_{L^\infty_TL^\infty(\Omega)}^2
\end{align*}
for a.e. $t\in (0,T)$.	By the regularity of $(u,\beta)$ and Assumption \ref{assump:assump}, we may choose a $T>0$ small enough 
\begin{align}\label{eq:T_and_beta_0}
T &= \frac{\tilde{c}}{2c^2}&  \text{ so that }&
&\beta(x,t) &\geq \tilde{c}- \frac{\tilde{c}}{2} = \frac{\tilde{c}}{2}  >0
\end{align}
for all $x\in \Gamma_C$ and a.e. $t\in(0,T)$. We have from the equation and Assumption \ref{assump:assump} that $\beta(x,t)\leq \beta_0(x)\leq  1$ for all $x\in \Gamma_C$ and a.e. $t\in (0,T)$ and that $t\mapsto\beta(\cdot,t)$ is decreasing. 
\\
\indent Next, we have that $\mathcal{E}_D \in L^\infty(\Gamma_C)$. Applying H\"{o}lder's inequality and Proposition \ref{prop:traceinfty} to \eqref{eq:integralform_beta} with \eqref{eq:H_E2} yields
\begin{align*}
\beta(x,t) &= \beta_0(x) - \int_0^t(\lambda \beta(x,s) u(x,s)^2-\mathcal{E}_D)_+\d s \\
&\geq \beta_0(x) - T(\norm{\lambda}_{L^\infty(\Gamma_C)} \norm{\beta}_{L^\infty_TL^\infty(\Gamma_C)} \norm{u}_{L^\infty_TL^\infty(\Omega)}^2 + \norm{\mathcal{E}_D}_{L^\infty(\Gamma_C)})
\end{align*}
for all $x\in \Gamma_C$ and a.e. $t\in (0,T)$. Choosing $T>0$ similarly to the above, we may conclude in the same way. \\
\end{proof}
Finally, we state the existence, uniqueness, and regularity result for \eqref{eq:variational_form}-\eqref{eq:integralform_beta} with $H$ given by the right-hand side of \hyperref[evol3]{$(E3)$}. This requires small modifications of the proof of Theorem \ref{thm:mainresult}. We state the changes in the proof of Corollary \ref{cor:beta012}. In the same result, we will also verify that the physical constraint of $\beta$ holds (see Remark \ref{remark:remark123123}-\ref{remark:props2}). In the proof of Corollary \ref{cor:beta012}, we need to restrict the initial bonding field slightly more. 
\begin{assump}\label{assump:assump2}
There exist $\Tilde{c}_1,\Tilde{c}_2>0$ satisfying $0<\Tilde{c}_1 \leq \Tilde{c}_2<1$ so that $\Tilde{c}_1 \leq \beta_0(x) \leq \Tilde{c}_2$ for all $x\in \Gamma_C$.
\end{assump}
\begin{cor}\label{cor:beta012}
    Assume that \eqref{eq:assumption_g}-\eqref{eq:assumption_EDLG}, \eqref{eq:smallness_beta01}, \eqref{eq:assumptionmu}, \eqref{eq:assumptionondata}, and Assumption \ref{assump:assump2} hold. Then
\begin{itemize}
\item[$(a)$] there exists a time $T>0$ such that $(u,\beta) \in  L^\infty_T (H^1_0(\Omega) \cap L^\infty(\Omega)) \times C([0,T];L^2(\Gamma_C)) \cap L^\infty_T L^\infty(\Gamma_C) $ is a unique solution to \eqref{eq:variational_form}-\eqref{eq:integralform_beta} with $H$ given by 
\begin{align}\label{eq:H_E3_3}
        H  &= -\lambda \frac{\beta}{1+\beta} u^2. 
    \end{align}
\end{itemize}
\begin{itemize}
\item[$(b)$] $t\mapsto \beta(\cdot,t)$ is decreasing and $0\leq\beta(x,t) \leq 1$ for all $x\in \Gamma_C$ and a.e. $t\in (0,T)$. 
\end{itemize} 
\end{cor}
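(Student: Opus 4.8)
The plan is to reduce Corollary \ref{cor:beta012} to Theorem \ref{thm:mainresult} by truncating $H$, in the same spirit that Corollary \ref{cor:application} reduces Problem \ref{prob:application} to the abstract framework, and then to run the short‑time Gr\"onwall argument already used in Corollary \ref{lem:beta01}. The friction terms are unchanged, so $\varphi,\phi$ are still given by \eqref{eq:varphi_phi} and satisfy \hyperref[assumptionvarphi]{$(H1)$}-\hyperref[assumptiophi]{$(H2)$} with $c_{0\varphi}=c_{0g}$, $c_{1\varphi}=c_{1g}$, $c_{2\varphi}=c_{2g}$ and $c_{1\phi}=c_{2\phi}=c_{3\phi}=\norm{\lambda}_{L^\infty(\Gamma_C)}$; since Assumption \ref{assump:assump2} gives $\norm{\beta_0}_{L^\infty(\Gamma_C)}\leq\tilde c_2<1$, the smallness condition \eqref{eq:smallness} follows from \eqref{eq:smallness_beta01}. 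The only genuinely new ingredient is $H$ in \eqref{eq:H_E3_3}, and the obstruction is that $y\mapsto y/(1+y)$ is singular at $y=-1$, so $H$ does not satisfy \hyperref[assumptionH]{$(H3)$} globally. I would therefore fix the Lipschitz truncation $\rho:\R\to[0,1]$, $\rho(y)=\max\{0,\min\{y,1\}\}$, which equals the identity on $[0,1]$, and set $\tilde H(x,y,r)=-\lambda(x)\,\frac{\rho(y)}{1+\rho(y)}\,r^2$. Because $s\mapsto s/(1+s)$ is $1$‑Lipschitz and vanishes at $0$ on $[0,1]$, the map $y\mapsto \rho(y)/(1+\rho(y))$ is $1$‑Lipschitz, bounded by $\tfrac12$, and vanishes at $0$; splitting $g(y_1)r_1^2-g(y_2)r_2^2$ in the standard way (and writing $g(y_1)=g(y_1)-g(0)$) shows $\tilde H$ satisfies \hyperref[assumptionH]{$(H3)$} with $c_{0\beta}=0$ and $c_{1\beta}=c_{2\beta}=c_{3\beta}=\norm{\lambda}_{L^\infty(\Gamma_C)}$. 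Theorem \ref{thm:mainresult} then produces a $T>0$ and a unique $(u,\beta)$ in the stated class solving \eqref{eq:variational_form} coupled with $\beta(t)=\beta_0+\int_0^t\tilde H(\beta(s),u(s))\d s$.

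Next I would shrink $T$ to recover the true equation. Using $0\leq \rho(\beta)/(1+\rho(\beta))\leq\tfrac12$ and the a priori bound $\norm{u}_{L^\infty_T L^\infty(\Omega)}\leq c$ from Proposition \ref{prop:n} (depending only on the data, not on $T$ in the admissible range), the modified integral equation gives, for all $x\in\Gamma_C$ and a.e.\ $t$,
\[
\beta_0(x)-\tfrac{T}{2}\norm{\lambda}_{L^\infty(\Gamma_C)}c^2\ \leq\ \beta(x,t)\ \leq\ \beta_0(x).
\]
Choosing $T$ of the order $\tilde c_1/(\norm{\lambda}_{L^\infty(\Gamma_C)}c^2)$, as in \eqref{eq:T_and_beta_0}, forces $\tfrac{\tilde c_1}{2}\leq\beta(x,t)\leq\tilde c_2<1$, hence $\rho(\beta)=\beta$ and $\tilde H(\beta,u)=H(\beta,u)$ with $H$ from \eqref{eq:H_E3_3}; so $(u,\beta)$ solves \eqref{eq:variational_form}-\eqref{eq:integralform_beta}, which is part $(a)$. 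Part $(b)$ is then immediate from the same display: $\beta(x,t)\leq\beta_0(x)\leq1$, $\beta(x,t)\geq\tfrac{\tilde c_1}{2}\geq0$, and $t\mapsto\beta(\cdot,t)$ is decreasing because the integrand $\lambda\,\frac{\beta}{1+\beta}\,u^2$ is nonnegative on this range.

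For uniqueness I would observe that, after possibly shrinking $T$ once more, any solution $(u,\beta)$ of \eqref{eq:variational_form}-\eqref{eq:integralform_beta} in the stated class must itself remain in $[\tfrac{\tilde c_1}{2},\tilde c_2]$: a continuation argument starting from $\beta(\cdot,0)=\beta_0\in[\tilde c_1,\tilde c_2]$, using that on this region $|\beta/(1+\beta)|\leq1$ together with the a priori bound on $u$, shows that the set of times on which $\beta$ stays in $[\tfrac{\tilde c_1}{2},\tilde c_2]$ is nonempty, open and closed in $[0,T]$. There $(u,\beta)$ solves the truncated problem as well, so uniqueness follows from the uniqueness part of Theorem \ref{thm:mainresult}. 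The main obstacle throughout is precisely the failure of \hyperref[assumptionH]{$(H3)$} near $\beta=-1$: the truncation removes it at the abstract level, while Assumption \ref{assump:assump2} — keeping $\beta_0$ strictly between $0$ and $1$ — is exactly what guarantees that the solution never reaches the region where $\tilde H$ and $H$ differ within the admissible (necessarily small) time horizon.
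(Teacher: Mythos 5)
Your proof is correct and gives the stated result, but it takes a genuinely different route from the paper. You truncate $H$ externally, replacing $\beta\mapsto\beta/(1+\beta)$ by $\beta\mapsto\rho(\beta)/(1+\rho(\beta))$ with $\rho=\max\{0,\min\{\cdot,1\}\}$, so that the resulting $\tilde H$ verifies \hyperref[assumptionH]{$(H3)$} globally; you then apply Theorem \ref{thm:mainresult} as a black box and afterward shrink $T$ so that $\beta$ never leaves $[\tilde c_1/2,\tilde c_2]$, where $\tilde H=H$. The paper instead keeps $H$ as it is and modifies the proof of Theorem \ref{thm:mainresult} internally: in Step 5.1 it replaces the metric space $X_T(a)$ of \eqref{eq:constraction_space} by
\[
X_T=\{v\in C([0,T];L^2(\Gamma_C)):0\le v(x,t)\le 1\text{ for all }x\in\Gamma_C,\text{ a.e.\ }t\in(0,T)\},
\]
shows $\Lambda$ is well defined and contractive there (using exactly the elementary inequality $\bigl|\tfrac{\beta_1}{1+\beta_1}-\tfrac{\beta_2}{1+\beta_2}\bigr|\le|\beta_1-\beta_2|$ on $[0,1]$ that you also invoke through $\rho$), and then lets the rest of the scheme run unchanged; part $(b)$ is then immediate because $\beta\in X_T$ is imposed from the outset. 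The trade-offs: the paper's approach avoids the unwrapping step and, in particular, avoids the continuation argument you need for uniqueness (the set $\{0\le\beta\le1\}$ is baked into the fixed-point space, so one never leaves the region where $H$ is tame); your approach keeps Theorem \ref{thm:mainresult} untouched and applies it as a genuine black box, at the cost of a short open-closed argument to guarantee that \emph{any} solution of the untruncated problem stays in the strip where $\rho(\beta)=\beta$. Your constants $c_{1\beta}=c_{2\beta}=c_{3\beta}=\norm{\lambda}_{L^\infty(\Gamma_C)}$ are slightly less sharp than the paper's $c_{1\beta}=c_{2\beta}=2c_{3\beta}=\norm{\lambda}_{L^\infty(\Gamma_C)}$ (which exploits $\rho(y)/(1+\rho(y))\le\tfrac12$), but this is immaterial for the conclusion. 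One small point of care in the continuation argument: the bound on $\norm{u}_{L^\infty_T L^\infty(\Omega)}$ from Proposition \ref{prop:n} is for the constructed iterates, so for an arbitrary competing solution you should derive the analogous bound via Proposition \ref{prop:u_bound} once $\beta$ is known to lie in $[0,1]$, which is exactly what the open-closed argument is circling back to establish; this is the same circularity the paper's Step 6 also glosses over, so it is not a defect of your route specifically.
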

\begin{remark}
    We observe that \eqref{eq:H_E3_3} is the right-hand side of \hyperref[evol3]{$(E3)$}.
\end{remark}
\begin{remark}\label{remark:remark123123}
If $\beta$ satisfies the properties in Corollary \ref{lem:beta01}, then the integral equation of \hyperref[evol3]{$(E3)$} models an irreversible process and $\beta$ satisfies the desired physical constraint $[0,1]$.
\end{remark}
\begin{remark}\label{remark:props2}
The properties in Corollary \ref{cor:beta012} are also true in the truncated version of the evolution equations (see, e.g., \cite{Sofonea2006,Sofonea2002}). In other words, the truncated version of \eqref{eq:integralform_beta} with \eqref{eq:H_E3_3} also possess these properties (see Remark \ref{remark:comparion}).
\end{remark}
\begin{proof}
    To prove $(a)$, we need to slightly modify the proof of Lemma \ref{lemma:lambda_fixedpoint} in Step 5.1 of the proof of Theorem \ref{thm:mainresult}. The rest follows directly from Theorem \ref{thm:mainresult}. We therefore only state these changes. We define $\varphi$ and $\phi$ as in \eqref{eq:varphi_phi}, which implies that the hypotheses on  $\varphi$ and $\phi$ hold with 
    \begin{align*}
        c_{0\varphi}&= c_{0g}, &c_{1\varphi}= c_{1g} & &c_{2\varphi}= c_{2g}, &&     c_{1\phi}= c_{2\phi}= c_{3\phi} = \norm{\lambda}_{L^\infty(\Gamma_C)}.
    \end{align*}
   \indent We note that in this part of the proof of Theorem \ref{thm:mainresult}, we use induction to obtain uniform estimates. In a similar fashion as in the proof of Theorem \ref{thm:mainresult}, we only state the initial case, as the induction step follows the same approach. For simplicity in notation, we denote $\beta = \beta^1$ and $u = u^1$, where $u^1 \in  L^\infty_T (H^1_0(\Omega) \cap L^\infty(\Omega))$ is a solution to \eqref{eq:u_n} for $n=1$.  We first define the complete metric space $X_T$ by
    \begin{align}\label{eq:metric_sapce}
    	X_T := \{ v \in C([0,T];L^2(\Gamma_C)) :  0 \leq v(x,t) \leq 1  \text{ for all } x\in \Gamma_C \text{ and a.e. } t\in (0,T) \}
    \end{align}
    and the application $\Lambda :X_T  \rightarrow X_T$ 
    \begin{align}\label{eq:Lambda3}
        \Lambda \beta(t) = \beta_0 - \lambda \int_0^t\frac{\beta(s)}{1+\beta(s)} (u(s))^2 \d s.
    \end{align}
    In comparison to Step i in the proof of Lemma \ref{lemma:lambda_fixedpoint}, we only need to verify that $0 \leq \Lambda \beta(x,t) \leq 1$ for all $x\in \Gamma_C$ and a.e. $t\in (0,T)$. For $\beta \in X_T$, Proposition \ref{prop:traceinfty} in  \eqref{eq:lamda_line1} yields
    \begin{align*}
        \Lambda \beta(x,t)
        &\leq \beta_0(x)  + T c^2 \leq \Tilde{c}_2 + T c^2,
    \end{align*}
    where we may choose a $T>0$ so that 
    \begin{align*}
         \Lambda \beta(x,t) \leq 1
    \end{align*}
    for all $x\in \Gamma_C$ and a.e. $t\in (0,T)$. Verifying that $\Lambda \beta(x,t) \geq 0$ for all $x\in \Gamma_C$ and a.e. $t\in (0,T)$ follows by Proposition \ref{prop:traceinfty}, Assumption \ref{assump:assump2}, and $\beta \in X_T$. Indeed,
    \begin{align*}
         \Lambda \beta(x,t) &=  \beta_0(x) - \lambda \int_0^t\frac{\beta(x,s)}{1+\beta(x,s)} (u(x,s))^2 \d s \\ &\geq \beta_0(x) - T\norm{\lambda}_{L^\infty(\Gamma_C)} \norm{u}_{L^\infty_TL^\infty(\Omega)}^2
    \end{align*}
    for all $x\in \Gamma_C$ and a.e. $t\in (0,T)$. We may therefore choose a $T>0$ such that $\Lambda \beta(x,t)  \geq 0$ for all $x\in \Gamma_C$ and a.e. $t\in (0,T)$ (similar to \eqref{eq:T} and \eqref{eq:T_and_beta_0}). The continuity follows directly by the fact that $\frac{\beta}{1+\beta} \leq 1$ and the last part of Step i in the proof of Lemma \ref{lemma:lambda_fixedpoint}.
    \indent
    Next,  for any $\beta_1, \beta_2 \in X_T$, we have 
    \begin{align*}
        \big|\frac{\beta_1}{1+\beta_1} - \frac{\beta_2}{1+\beta_2} \big| &= \big|\frac{\beta_1(1+\beta_2) - \beta_2(1+\beta_1)}{(1+\beta_1)(1+\beta_2)} \big| \leq |\beta_1+\beta_1\beta_2 - \beta_2 - \beta_2\beta_1 | = |\beta_1 - \beta_2|.
    \end{align*}
    We may therefore conclude as in the proof in Lemma \ref{lemma:lambda_fixedpoint} that $\beta \in X_T$ is a unique fixed point to \eqref{eq:Lambda3}. 
     \\ 
    \indent
    To utilize the rest of the proof of Theorem \ref{thm:mainresult}, we verify that the hypothesis \hyperref[assumptionH]{$(H3)$} for $H$ given by \eqref{eq:H_E3_3} holds. We can easily find that they are satisfied with $c_{0\beta} =0$ and $c_{1\beta}=c_{2\beta}= 2c_{3\beta} = \norm{\lambda}_{L^\infty(\Gamma_C)}$.
    \\
    \indent For the remaining part (part $(b)$), we have from part $(a)$ that $0\leq \beta(x,t) \leq 1$ for all $x\in \Gamma_C$ and a.e. $t\in (0,T)$. Therefore,  we only have left to show that $t\mapsto \beta(\cdot,t)$ is decreasing for a.e. $t\in (0,T)$. But this follows directly from the equation. 
\end{proof}

\bibliographystyle{acm}
\bibliography{antiplane}

\end{document}